\renewcommand{\S}{\operatorname{S}}
\def\R{{\mathbb R}}
\def\N{{\mathbb N}}
\def\Z{{\mathbb Z}}
\def\F{\Phi}
\def\a{\alpha}
\def\b{\beta}
\def\eps{\varepsilon}
\def\s{\sigma}
\def\t{\theta}
\def\lam{\lambda}
\def\tcut{t_{\operatorname{cut}}}
\newcommand{\MAX}{\operatorname{MAX}\nolimits}
\newcommand{\Exp}{\operatorname{Exp}\nolimits}
\newcommand{\SO}{\operatorname{SO}}
\def\Fb{\F^{\b}}
\newcommand{\pder}[2]{\frac{\partial \, #1}{\partial \, #2} }
\newcommand{\pnder}[3]{\frac{\partial^#3 \, #1}{\partial \, #2^#3}}
\newcommand{\be}[1]{\begin{equation}\label{#1}}
\newcommand{\ee}{\end{equation}}
\newcommand{\eq}[1]{$(\protect\ref{#1})$}
\newcommand{\orr}[2]{\left[\begin{array}{l}{#1}\\[0.1cm]{#2}\end{array
}\right.}
\newcommand{\orrr}[3]{\left[\begin{array}{l}{#1}\\{#2}\\{#3}\end{array
}\right.}
\def\tlam{\widetilde{\lambda}}
\def\bx{\bar{x}}
\def\by{\bar{y}}
\def\bq{\bar{q}}
\def\bu{\bar{u}}
\def\iff{\quad\Leftrightarrow\quad}
\newcommand{\vectortwo}[2]{
\left(\begin{array}{c}
#1\\
#2	
\end{array}\right)
}
\newcommand{\matrixtwo}[4]{
\left(
\begin{array}{cc}
#1&#2\\
#3&#4	
\end{array} \right)
}
\newcommand{\matrixrotation}[1]{\matrixtwo{\cos #1}{-\sin #1}{\sin 
#1}{\cos #1}}
\newcommand{\pdd}[2]{\frac{\partial #1}{\partial #2}}
\newcommand{\sast}{s^{\ast}}
\newcommand{\mast}{m^{\ast}}
\newcommand{\past}{p^{\ast}}
\newcommand{\sign}{\mathrm{sign}}
\newcommand{\barm}{\bar{m}}
\renewcommand\geq{\geqslant}
\renewcommand\leq{\leqslant}
\newcounter{abcd}
\newtheorem{theoremSachkov}{Theorem}[abcd]
\newtheorem{theorem}{Theorem}[section]
\newtheorem{lemma}{Lemma}[section]
\newtheorem{corollary}{Corollary}[section]
\newtheorem{proposition}{Proposition}[section]
\newtheorem{remark}{Remark}[section]
\begin{document}
\subjclass{517.977}

\title
{Asymptotics of Maxwell time in the plate-ball problem\footnote{Supported by Russian Foundation for Basic Research, Project No.~09-01-00246-а.}}

\author{A.P. Mashtakov, A.Yu. Popov}
\address{Program Systems Institute of RAS}
\email{alexey.mashtakov@gmail.com}

\begin{abstract}
The problem on rolling of a sphere on a plane without slipping or twisting is considered. One should roll the sphere from one contact configuration to another so that the length of the curve traced by the contact point in the plane was the shortest possible. Asymptotics of Maxwell time for rolling of the sphere along small amplitude sinusoids is studied. Two-sided estimate for this asymptotics is obtained. 
\end{abstract}

\maketitle
\section{Introduction}
\label{sec:intro}
For the problem on rolling of a sphere on a plane without slipping or twisting, an optimal control problem is studied. State of the system is described by the contact point of the sphere and the plane and orientation of the sphere in three-dimensional space.
One should roll the sphere from one contact configuration to another so that the length of the curve traced by the contact point in the plane was the shortest possible. The problem has application in robotics: rotation of a solid body in robot's hand. In this work we obtain two-sided estimate of Maxwell time in the plate-ball problem in asymptotic case.

The problem was stated in ~\cite{hammersley} by Hammersley. Then Arthur and Walsh~\cite{arthurs_walsh} proved integrability of Hamiltonian system of PMP in elliptic functions. Jurdjevic in~\cite{jurd_plate-ball, jurd_book} showed that projections of extremal curves $(x(t), y(t))$ are Euler elasticae (see the papers ~\cite{euler,love})). He gave a description of different qualitative types of extremal trajectories, and obtained differential equations for evolution of Euler angles along extremal trajectories. Explicit formulas for the extremals were obtained in the paper~\cite{s2_as}.  

Optimality of extremals is still an open problem nowadays. Short arcs of extremal
trajectories are optimal but long arcs, in general, are not optimal. A point at which an extremal trajectory loses global optimality is called a cut point. A cut point is a conjugate point or a Maxwell point. A Maxwell point is a point in the state space, where an extremal trajectory crosses another one with the same value of cost functional. Yu.~Sachkov began to study cut points in the plate-ball problem (see the paper~\cite{s2r2}). He found continuous and discrete symmetries of the exponential mapping. Then he obtained equations, which define Maxwell points as fixed points of the discrete symmetries, and formulated necessary optimality conditions in terms of Maxwell time (see Theorems~\ref{th:Max1},~\ref{th:Max2} in Section~\ref{sec:known-results}). But the problem of optimality of extremal trajectories is steel open because the equations, which define the Maxwell points, are not solved.  

The papers~\cite{s2_as,masht1} present the asymptotics of extremal trajectories in a neighborhood of the stable equilibrium of a mathematical pendulum (see~\eq{pend}), which appears in the adjoint subsystem of the Hamiltonian system of the Maximum Principle. In this case, the extremal curves on the plane are close to sinusoids of small amplitude.   

This work continues to study the problem of optimality of extremal trajectories. It studies the problem in an asymptotic case, where the formulas defining the extremal trajectories and the Maxwell points are expressed via trigonometric functions. They are simpler than in the general case where the formulas are expressed in elliptic functions. We study the behavior of Maxwell points $\MAX^1$ and $\MAX^2$ in this case and obtain two-sided estimates for the first Maxwell times  $t_1$ and $t_2$, which correspond to the fixed points of the discrete symmetries $\eps^1$ and $\eps^2$ of the exponential mapping.    
\section{Statement of the problem and known results}
\label{sec:known-results}
In this section we formulate the plate-ball problem and recall some known results. Let $(x,y)\in \R^2$ be the contact point of the sphere and the plane. By $q =(q_0,q_1,q_2,q_3)\in \S^3$ denote the unit quaternion (see the paper~\cite{pontryagin_quatern}) representing the rotation of three-dimensional space, which translates the current orientation of the sphere to the initial orientation. The problem on optimal rolling of a unit sphere on a plane is stated as follows: 
\begin{align}
&\dot{Q} = u_1 X_1(Q) + u_2 X_2(Q), \label{dotxy}\\
&X_1(Q) = (1,0,q_2,q_3,-q_0,-q_1)^T,\qquad X_2(Q) = (0,1,-q_1,q_0,q_3,-q_2)^T,\\
&Q = (x,y,q_0,q_1,q_2,q_3) \in M = \R^2 \times \S^3, \qquad u = (u_1, u_2) \in \R^2, \label{sysQu}\\
&Q(0) = Q_0 = (0, 0, 1, 0, 0, 0), \qquad Q(t_1) = Q_1, \label{sysQ0}\\
&l = \int_0^{t_1} \sqrt{u_1^2 + u_2^2} \, dt \to \min. \label{sysl}
\end{align}
Admissible controls are measurable and essentially bounded. Admissible trajectories are Lipschitz. Problem~\eq{dotxy}--\eq{sysl} is a left-invariant sub-Riemannian problem
on the Lie group $M = \R^2 \times \S^3$. The control system is completely
controllable by the Rashevsky--Chow theorem (see the paper~\cite{notes}). Existence of optimal controls follows from Filippov's theorem (see~\cite{notes}). The Maximum Principle is applied to study the optimal controls. In the abnormal case, the sphere rolls along a straight line in the plane $(x,y)$. In the normal case, the subsystem of the Hamiltonian system for the adjoint variables $(\t, c, r, \a)$ satisfies the equations of a mathematical pendulum as follows:
\begin{eqnarray}
 &\dot{\theta} = c,\quad \dot{c} = -r \sin\theta,\quad \dot{\alpha} = \dot{r} = 0.  \label{pend}
\end{eqnarray}
Projections of extremal trajectories to the plane $(x, y)$ are Euler elasticae, i.e. stationary
configurations of an elastic rod on a plane with fixed end points and fixed
tangents at these points (see the paper~\cite{jurd_plate-ball}).

In the paper~\cite{s2r2} the author describes continuous and discrete symmetries of the exponential mapping in the plate-ball problem
\begin{align*}
\Exp \ &: \ (\lam,t) \mapsto Q_t, \quad (\lam,t) \in N = C \times 
\R_+, 
\quad Q_t \in M = \R^2 \times \SO(3),\\
C  &= \{\lam \in T_{Q_0}^* M \mid H(\lam) = 1/2\} = \{ (\t, c, \a, r) \mid \t \in S^1, \quad  c \in \R, \quad  r \geq 
0, \quad  \a \in S^1\}. 
\end{align*}
The continuous symmetries $\{\F^{\b} \mid \b \in S^1 \}$ are rotations by the angle $\b$ in the plane $(x,y)$. The discrete symmetries $\eps^1$, $\eps^2$, $\eps^3$ are reflections of the trajectories of pendulum~\eq{pend} in the coordinate axes $\{c = 0\}$, $\{\t = 0\}$, and in the origin $(\t,c) = (0,0)$ respectively. The action of the symmetries in the preimage $N$ and in the image $M$ of the exponential mapping is defined in~\cite{s2r2}. Also, there is a description of the Maxwell sets corresponding to the symmetries $\eps^i$, $i = 1, 2, 3$: 
\begin{multline*}
\MAX^i = \{(\lam,t) \in N \mid \exists \ \b \in S^1: (\tlam, t) 
= \eps^i \circ \Fb(\lam,t), \  
\Exp(\lam, s) \not\equiv \Exp(\tlam, s), \ \Exp(\lam, t) =  
\Exp(\tlam, t)\}.
\end{multline*}
In particular, there is proved the following theorem for the symmetry $\eps^1$.
\begin{theoremSachkov}[\cite{s2r2}]
\label{th:Max1}
Suppose $t>0$ and $Q_s = (x_s, y_s, R_s) = \Exp(\lam,s)$ is an extremal trajectory such that the elastica $\{(x_s, y_s) \mid s \in [0,t]\}$ is nondegenerate, is not centered at an inflection point, and satisfies the following equation:
\begin{equation}
q_3(t) = 0. \label{eq:max1}
\end{equation}
Then $(\lam,t) \in \MAX^1$. Therefore the trajectory $Q_s$, $s \in [0, \tilde{t}]$ is not optimal for any $\tilde{t} > t$.
\end{theoremSachkov}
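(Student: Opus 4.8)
The plan is to exhibit the angle $\b$ and the reflected point $\tlam$ demanded by the definition of $\MAX^1$, and then to check the three defining conditions in turn. The backbone of the argument is the equivariance of the exponential mapping: since $\eps^1$ and $\Fb$ act both on the preimage $N$ and on the image $M$, one has $\Exp(\eps^1\circ\Fb(\lam,t)) = \nu^1\circ\Fb(\Exp(\lam,t))$, where $\nu^1$ denotes the reflection induced by $\eps^1$ on $M$ and the $\Fb$ on the right is the rotation of $M$ by $\b$. Setting $(\tlam,t)=\eps^1\circ\Fb(\lam,t)$ (symmetries preserve the time $t$), the terminal condition $\Exp(\lam,t)=\Exp(\tlam,t)$ becomes the requirement that the endpoint $Q_t=\Exp(\lam,t)$ be a fixed point of $\nu^1\circ\Fb$ on $M$ for a suitable $\b$.

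First I would make the action of $\eps^1$ explicit. On the pendulum \eqref{pend} it is the reflection in $\{c=0\}$, i.e. $c\mapsto -c$ together with reversal of time, which leaves the system invariant; correspondingly the projected elastica $\{(x_s,y_s)\}$ is carried into its mirror image. Composing with the planar rotation $\Fb$, I would write out the induced map $\nu^1\circ\Fb$ on the six coordinates $(x,y,q_0,q_1,q_2,q_3)$ of $Q_t$ from the formulas of \cite{s2r2}. The free parameter $\b$ is then chosen so as to align the planar coordinates $(x_t,y_t)$ and the corresponding components of the rotation $R_t$; after this normalization the fixed-point system collapses to a single scalar equation, which must be shown to be exactly \eqref{eq:max1}, namely $q_3(t)=0$. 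This is the computational core of the argument.

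Next I would verify the nondegeneracy condition $\Exp(\lam,s)\not\equiv\Exp(\tlam,s)$, i.e. that the reflected covector yields a genuinely different extremal and not the same curve. This is where the hypotheses enter: if the elastica were degenerate (a straight line or a circle, corresponding to the pendulum resting at equilibrium) or were centered at an inflection point, then $\nu^1\circ\Fb$ would fix the whole trajectory and give $\tlam\equiv\lam$, producing no Maxwell point. Excluding these two cases guarantees $\Exp(\lam,s)\not\equiv\Exp(\tlam,s)$. I expect this step — precisely characterizing when the combined symmetry acts trivially — to be the main obstacle, since it requires a case analysis of the elastica types together with the inflectional/noninflectional dichotomy.

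Finally, the three conditions together give $(\lam,t)\in\MAX^1$. Since the Hamiltonian is fixed at $H=1/2$, arclength equals time along extremals, so the two distinct trajectories $\Exp(\lam,\cdot)$ and $\Exp(\tlam,\cdot)$ reach the common point $Q_t$ with the same value $t$ of the length functional \eqref{sysl}. By the standard Maxwell argument — an extremal cannot remain globally optimal beyond a point at which it meets a distinct extremal of equal cost — the trajectory $Q_s$, $s\in[0,\tilde t]$, is not optimal for any $\tilde t>t$, which is the asserted conclusion.
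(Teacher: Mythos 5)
The first thing to note is that the paper contains no proof of this statement: it is Theorem A, quoted from Sachkov's paper \cite{s2r2} as a known result in the ``known results'' section, so there is no in-paper argument to compare yours against. Judged on its own, your outline does reconstruct the strategy actually used in \cite{s2r2}: equivariance of $\Exp$ under $\eps^1\circ\Fb$, reduction of the terminal identity $\Exp(\lam,t)=\Exp(\tlam,t)$ to a fixed-point condition for the induced map $\nu^1\circ\Fb$ on $M=\R^2\times\S^3$, elimination of the rotation angle $\b$ to leave the single scalar equation $q_3(t)=0$, identification of the hypotheses (nondegenerate, not centered at an inflection point) with the nontriviality condition $\Exp(\lam,s)\not\equiv\Exp(\tlam,s)$, and the standard argument that a normal extremal cannot stay optimal past a Maxwell point. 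The concluding step is also correctly justified (on $C=\{H=1/2\}$ extremals are arclength-parametrized, so equal time means equal cost).

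The gap is that the two steps carrying all of the content are explicitly deferred rather than performed. You say the collapse of the fixed-point system to $q_3(t)=0$ is ``the computational core'' and the characterization of when $\eps^1\circ\Fb$ acts trivially on a trajectory is ``the main obstacle,'' but you do not write out the action of $\eps^1$ and $\Fb$ on the coordinates $(x,y,q_0,q_1,q_2,q_3)$, do not show that the choice of $\b$ leaves exactly the equation $q_3(t)=0$ (and not a stronger or weaker system), and do not carry out the case analysis showing that triviality of the symmetry on the trajectory occurs precisely for degenerate elasticae and elasticae centered at an inflection point. Without these, the argument is a correct plan, not a proof. In the context of this paper the appropriate resolution is simply to cite \cite{s2r2}, as the authors do; if you want a self-contained proof, both computations must be supplied.
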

There is a similar theorem for the symmetry $\eps^2$.
\begin{theoremSachkov}[\cite{s2r2}]
\label{th:Max2}
Suppose $t>0$ and $Q_s = (x_s, y_s, R_s) = \Exp(\lam,s)$ is an extremal trajectory such that the elastica $\{(x_s, y_s) \mid s \in [0,t]\}$ is nondegenerate, is not centered at a vertex, and satisfies the following equation:
\begin{equation}
(x q_1 + y q_2)(t) = 0. \label{eq:max2}
\end{equation}
Then $(\lam,t) \in \MAX^2$. Therefore the trajectory $Q_s$, $s \in [0, \tilde{t}]$ is not optimal for any $\tilde{t} > t$.
\end{theoremSachkov}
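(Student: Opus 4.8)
The plan is to exhibit $(\lam,t)$ as a genuine Maxwell point for $\eps^2$ by constructing a second extremal of the same length that reaches $Q_t$ at time $t$. Both $\eps^2$, the reflection of the pendulum trajectory \eq{pend} in the vertex axis $\{\t = 0\}$, and the rotation $\Fb$ of the plane $(x,y)$ by the angle $\b$ are symmetries of the sub-Riemannian structure, so the composition $\eps^2 \circ \Fb$ carries extremals to extremals while preserving the cost \eq{sysl}. Accordingly, for each $\b \in S^1$ I would set $(\tlam, t) = \eps^2 \circ \Fb(\lam, t)$, obtaining a competitor $\Exp(\tlam, s)$ with the same length as $\Exp(\lam, s)$. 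By the definition of $\MAX^2$ it then suffices to (i) choose $\b$ so that $\Exp(\lam, t) = \Exp(\tlam, t)$, and (ii) verify $\Exp(\lam, s) \not\equiv \Exp(\tlam, s)$.

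For step (i) I would write both endpoints in the coordinates $(x, y, q_0, q_1, q_2, q_3)$ using the explicit formulas for the exponential mapping from \cite{s2_as}. On the $\R^2$-factor the map $\eps^2 \circ \Fb$ acts as a reflection composed with a rotation, and on $\S^3$ it acts by conjugating the quaternion by the corresponding rotation of $\SO(3)$. Choosing $\b$ to align the reflected frame with the original one, I expect the bulk of the endpoint equations to close up automatically once the energy integral of \eq{pend} and the constraint $q_0^2 + q_1^2 + q_2^2 + q_3^2 = 1$ are used; the single surviving scalar condition should be exactly $(x q_1 + y q_2)(t) = 0$, which is the hypothesis. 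This yields $\Exp(\lam, t) = \Exp(\tlam, t)$.

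For step (ii), if $\Exp(\lam, s) \equiv \Exp(\tlam, s)$ then, up to the rotation $\Fb$, the reflection $\eps^2$ fixes the whole extremal, which forces the underlying elastica to be symmetric about its central point, i.e. centered at a vertex, or to be degenerate. Both possibilities are excluded by hypothesis, so the competitor is genuinely distinct and all three conditions defining $\MAX^2$ hold; hence $(\lam, t) \in \MAX^2$. The loss of optimality is then the standard Maxwell argument: two distinct extremals of equal length meeting at $Q_t$ cannot both remain optimal beyond $t$, for otherwise their non-smooth concatenation would be a minimizer, contradicting that every sub-arc of an optimal trajectory is a smooth extremal. Thus $Q_s$ is not optimal on $[0, \tilde t]$ for any $\tilde t > t$.

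I expect the main obstacle to be step (i): converting the geometric reflection $\eps^2$ into the precise scalar identity $(x q_1 + y q_2)(t) = 0$. This needs the explicit action of $\eps^2$ on the adjoint variables $(\t, c, r, \a)$ and on the quaternion, the correct compensating angle $\b$, and a lengthy though routine reduction via the pendulum integrals and the unit-quaternion relation. A subtlety is to match the geometric notion of a vertex of the elastica with the algebraic fixed-point locus of $\eps^2$, so that the degenerate cases excluded in the hypothesis are exactly those in which distinctness in step (ii) would fail.
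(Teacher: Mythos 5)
First, a point of reference: this statement is not proved in the paper at all --- it is Theorem~\ref{th:Max2}, quoted from Sachkov's work \cite{s2r2} as a known result, so there is no in-paper proof to compare against. Your outline does reproduce the strategy of the cited source: use the composition $\eps^2\circ\Fb$ of a discrete reflection with a rotation to manufacture a competitor extremal of equal length, characterize endpoint coincidence by a single scalar equation, and rule out identical trajectories via the ``not centered at a vertex'' hypothesis. The architecture is the right one.

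The genuine gap is that your step (i), which carries the entire content of the theorem, is asserted rather than proved. You write that you ``expect the bulk of the endpoint equations to close up automatically'' and that ``the single surviving scalar condition should be exactly $(xq_1+yq_2)(t)=0$,'' but nothing in the proposal derives the explicit action of $\eps^2$ on $(x,y,q_0,q_1,q_2,q_3)$, identifies the compensating angle $\b$, or shows that the system $\Exp(\lam,t)=\Exp(\tlam,t)$ reduces to precisely this one equation rather than to a larger set of conditions. Without that computation one cannot even confirm that the correct condition for $\eps^2$ involves $xq_1+yq_2$ rather than some other combination --- note that the analogous Theorem~\ref{th:Max1} for $\eps^1$ yields $q_3(t)=0$, and the two cases differ exactly in this reduction, which is where the real work lies. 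A secondary imprecision: in the final optimality argument the broken concatenation of the two extremals is not obviously ``non-smooth''; the standard contradiction is rather that the concatenated curve, being optimal, must itself be the projection of an extremal, and two (analytic) extremals agreeing on the whole interval $[t,\tilde t]$ must agree identically, contradicting $\Exp(\lam,s)\not\equiv\Exp(\tlam,s)$. As it stands your proposal is a correct plan, consistent with the proof in \cite{s2r2}, but not yet a proof.
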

In Sections~\ref{sec:g1}--\ref{sec:asymp_max} we study asymptotics of the first Maxwell times $t_1$ and $t_2$ corresponding to the Maxwell points $(\lam, t) \in \MAX^1$ and $(\lam, t) \in \MAX^2$, near stable equilibrium of mathematical pendulum~\eq{pend}. Note that there is a similar theorem for the the symmetry $\eps^3$ in~\cite{s2r2}, but we do not consider the Maxwell set $\MAX^3$ since it has lesser dimension than $\MAX^1$ and $\MAX^2$. 
\section{Asymptotics of extremal trajectories and \\ limit behavior of Maxwell sets $\MAX^1$ and $\MAX^2$}
We study optimality of extremal trajectories in the plate-ball problem. Due to complexity of parametric equations of these
trajectories it is very difficult to study this problem in the general case. Therefore we study the asymptotic case, corresponding to small oscillations of pendulum~(\ref{pend}). In this case, the sphere rolls along the curves close to small-amplitude sinusoids. 

We consider the Hamiltonian system of the Maximum Principle near the stable equilibrium $\theta=c=0$ of mathematical pendulum~(\ref{pend}). The asymptotics as $\rho_0^2=\theta_0^2 + c_0^2 \to 0$ of the solutions $x(t)$, $y(t)$, $q_0(t)$, $q_1(t)$, $q_2(t)$, $q_3(t)$ of this system is presented in~\cite{masht1}. It is well known that mathematical pendulum~(\ref{pend}) is a harmonic oscillator in the asymptotic case. The Hamiltonian system is simplified by change of variables corresponding to the symmetry of the system ''rotation by angle $\alpha$'', which is defined as follows:
\begin{eqnarray*}
&(t, \theta,c,\alpha,r,x,y,u_1,u_2,q_0,q_1,q_2,q_3)\to(s,\theta,d,\alpha,m,\bx,\by,\bu_1,\bu_2,\bq_0,\bq_1,\bq_2,\bq_3),\nonumber \\
&s = mt,\quad d = c/m, \quad m = \sqrt{r},\nonumber \\
&\vectortwo{u_1}{u_2} = A(\alpha) \vectortwo{\bu_1}{\bu_2},\text{ where $A(\alpha) = \matrixrotation{\alpha}$.} \nonumber
\end{eqnarray*}
\begin{eqnarray}
&\vectortwo{x}{y} = A(\alpha) \vectortwo{\bx}{\by},\quad
\vectortwo{q_1}{q_2} = A(\alpha) \vectortwo{\bq_1}{\bq_2},\quad
\begin{cases}
q_0 &= \bq_0,\\
q_3 &= \bq_3. \label{xyqtilde}
\end{cases}
\end{eqnarray} 
Asymptotics for the elastica $\bx, \by$ and the components $\bq_i$ of the quaternion $\bq$ are the following: 
\begin{align}
\bx(s) = & \frac{s}{m} + O(\rho_0^2), \qquad
\by(s) = \frac1m (\theta_0 \sin s  + d_0 (1-\cos s)) + 
O(\rho_0^2) \label{xytilde}\\
\bq_0(s) = &\cos \frac{s}{2 m} + O(\rho_0^2),\qquad 
\bq_2(s) = -\sin \frac{s}{2 m} + O(\rho_0^2), \label{q0q2tilde}\\ 
\bq_1(s) = & \frac{1}{2(m^2-1)}(m \cos \frac{s}{2 m} \sin s - (1+\cos s) \sin\frac{s}{2 m}) \theta_0 + \nonumber\\
               &+ \frac{1}{2(m^2-1)}(m(1-\cos s)\cos\frac{s}{2 m} - \sin s \sin\frac{s}{2 m}) d_0 + O(\rho_0^2), \label{q1tilde}\\
\bq_3(s) =& \frac{1}{2(m^2-1)}((-1+\cos s)\cos\frac{s}{2 m} + m \sin s \sin\frac{s}{2 m}) \theta_0  + \nonumber\\
               &+ \frac{1}{2(m^2-1)}(\sin s \cos\frac{s}{2 m} - m (1+\cos s)\sin\frac{s}{2 m}) d_0 + O(\rho_0^2). \label{q3tilde}
\end{align}
These expansions have a removable singularity at $m = 1$. Up to $O(\rho_0^2)$ the curve $(x,y)$ is a sinusoid of small amplitude $\frac{\rho_0}{m}$. Complexity of the formulas for the asymptotics of the quaternion is related to different frequencies of trigonometric functions. This implies existence of ''resonance'' instants. At such instants plots of the asymptotics of the first Maxwell times $t_1$ and $t_2$, corresponding to the symmetries $\eps_1$ and $\eps_2$ respectively, have vertical tangent lines, see below. In the following sections we use \eq{xytilde}--\eq{q3tilde} to study asymptotics of Maxwell points as $\rho_0 \to 0$.

To examine optimality of extremal trajectories in asymptotic case we study the first Maxwell times $t_1$ and $t_2$. They are first values of time, when extremal trajectory arrives at the Maxwell sets $\MAX^1$ and $\MAX^2$ respectively.
  
By Theorem~\ref{th:Max1} the equation $q_3=0$ implies existence of a Maxwell point for the elasticae that are nondegenerate and not centered at an inflection point. In view of (\ref{xyqtilde})--(\ref{q3tilde}), the leading term of the function $q_3(t)$ has a root if there holds the following equality: 
\begin{equation*}
\frac{d_0 \cos p - \theta_0 \sin p}{m^2-
1}(\cos\frac{p}{m} \sin p - m \cos p \sin \frac{p}{m}) =0, \quad p =\frac{s}{2}.
\label{q3}
\end{equation*}
Roots of the factor $d_0 \cos p - \theta_0 \sin p$ has a simple geometric meaning for the sinusoid $(\bx^0(s), \by^0(s)) = (s/m,(\theta_0 \sin s + d_0(1 - \cos s))/m)$ (the leading term of the asymptotics for the elastica $(\bx(s), \by(s))$), and therefore for the sinusoid $(x^0(s),y^0(s)) = (\cos \a \, \bx^0(s) + \sin \a \, \by^0(s), -\sin \a \, \bx^0(s) + \cos \a \, \by^0(s))$ (the leading term of the asymptotics for the elastica $(x(s),y(s))$ as $\rho_0 \to 0$). It can easily be checked that  the sinusoid $\{(x^0(\s),y^0(\s)) \mid \s \in [0, s]\}$ is centered at the inflection point if and only if $d_0 \cos p - \theta_0 \sin p = 0$.

Thus we study roots of the factor $(\cos\frac{p}{m} \sin p - m \cos p \sin \frac{p}{m})/(m^2-1)$. This factor has an unremovable singularity if $m = 0$, and it does not vanish for any $p \neq 0$ if $m = 1$. Therefore we consider the function  
\begin{equation}
g_1(p,m) = \cos\frac{p}{m} \sin p - m \cos p \sin \frac{p}{m}, \quad m 
\in (0,1) \cup (1,+\infty), \quad p > 0,
\label{g1}
\end{equation}
and study its minimal positive root
\begin{equation}
p_1(m) = \min\{p>0|g_1(p,m)=0\}.
\label{fp1}
\end{equation}
In Section \ref{sec:g1} we study the function $p_1(m)$. We obtain two-sided estimate of $p_1(m)$ and prove that $p_1(m)$ is monotone and differentiable for $m>0$, $m\neq 1$.

Similarly, we examine the first Maxwell time $t_2$ (the first instant of time when an extremal trajectory rich the Maxwell set $\MAX^2$). By Theorem~\ref{th:Max2} the equation $x q_1 + y q_2=0$ implies existence of a Maxwell point for the elasticae that are nondegenerate and not centered at the top. In view of (\ref{xyqtilde})--(\ref{q1tilde}), the function $x q_1 + y q_2$ is equal to zero up to $O(\rho_0^2)$ if there holds the following equality    
\begin{equation*}
\frac{d_0 \sin p + \theta_0 \cos p}{m (m^2-1)}g_2(p,m)=0,\quad p =\frac{s}{2},
\label{eq:asympmax2}
\end{equation*}
where
\begin{equation}
\label{eq:g2}
g_2(p,m)=m p \cos \left(\frac{p}{m}\right)\sin p - (p\cos p + (m^2 -1)\sin p)\sin(\frac{p}{m}).
\end{equation}
If an elastica is not centered in the vertex, then the factor $d_0 \sin p + \theta_0 \cos p$ is bounded away from zero. Therefore, we are interested in a minimal positive root of the function $g_2(p,m)=0$:
\begin{equation}
\label{eq:p2}
p_2(m)= \min\{p>0|g_2(p,m)=0\}.
\end{equation}
In Section \ref{sec:g2} we study the function $p_2(m)$. We obtain two-sided estimate of $p_2(m)$ and prove that $p_2(m)$ is a continuous function for $m>0$, $m\neq 1$.
\section{Study of the function $p_1(m)$}
\label{sec:g1}
In this section we prove a two-sided estimate of a minimal positive root of the equation $g_1(p, m)=0$ (see (\ref{g1})).
Our aim is to find or estimate as accurately as possible for any $m\in(0,1)\cup(1,+\infty)$ the value of $p_1(m)$ defined in (\ref{fp1}). The main results on this problem are summarized in Theorem \ref{th:p1}.
\begin{theorem}\label{th:p1}
For any $m \in (0,1) \cup (1,+\infty)$ the function $g_1(p,m)$ has a
minimal positive root $p_1(m)$ satisfying the following properties:
\begin{enumerate}
\item[a)] The function $p_1(m)$ is continuous and increasing for $m\in(0,1)$, coincides with $\frac{\pi m}{1-m}$ at the points $\{m = \frac{k}{k+2}|k \in \N\}$; is continuous and decreasing for $m\in(1,+\infty)$, coincides with $\frac{\pi m}{m-1}$ at the points $\{m = \frac{k+2}{k}|k \in \N\}$.
\item[b)] The function $p_1(m)$ is continuously differentiable at all points of the interval $(0,1)$, except for the set $\{m=\frac{k}{k+1}|k \in \N\}$, where its derivative is equal to $+\infty$; is continuously differentiable at all points of the interval $(1,+\infty)$, except for the set $\{m=\frac{k+1}{k}|k \in \N\}$,  where its derivative is equal to $-\infty$.
\item[c)] For $m \in (\frac13,1)$ the plot of $p = p_1(m)$ ''wraps'' the hyperbole $p = \frac{\pi m}{1-m}$ as follows:
\begin{equation}\label{p1hypm01}
\begin{cases}
p_1(m)<\frac{\pi m}{1-m} \text{ for } m \in \left(\frac{2 k -1}{2 k +1},\frac{k}{k +1} \right),\\
\frac{\pi m}{1-m} < p_1(m) \text{ for } m \in \left(\frac{k}{k +1},\frac{2 k + 1}{2 k +3} \right), \quad k \in \N.
\end{cases}
\end{equation}
For $m \in (1,3)$ the plot of $p = p_1(m)$ ''wraps'' the hyperbole $p = \frac{\pi m}{m-1}$ as follows:
\begin{equation}\label{p1hypmgr1}
\begin{cases}
\frac{\pi m}{m-1} < p_1(m) \text{ for } m \in \left(\frac{2 k +3}{2 k +1},\frac{k+1}{k} \right),\\
p_1(m)<\frac{\pi m}{m-1} \text{ for } m \in \left(\frac{k+1}{k},\frac{2k+1}{2 k -1} \right) , \quad k \in \N.
\end{cases}
\end{equation}
\item[d)]Estimates (\ref{p1hypm01}),(\ref{p1hypmgr1}) are supplemented by estimates from the other side. For $\forall k \in \N$ define 
\begin{eqnarray*}
& h_1(m) = \min\left(\frac{1+m}{2(1-m)}-k, \sqrt[3]{2 k + 1- \frac{1+m}{1-m}},\frac{1+m}{\pi (1-m)} \arcsin\frac{1-m}{1+m} \right), m \in \left(\frac{2 k -1}{2 k +1},\frac{k}{k+1} \right),	\\
& h_2(m) = \min\left(\sqrt[3]{\frac{1+m}{1-m}- (2 k+1)}, k+1 - \frac{1+m}{2(1-m)},\frac{1+m}{\pi (1-m)} \arcsin\frac{1-m}{1+m} \right), m \in \left(\frac{k}{k +1},\frac{2 k+1}{2 k+3} \right).
\end{eqnarray*}
The following inequalities are valid for $m \in \left(\frac13,1\right)$:
\begin{equation}\label{p1hyp2m01}
\begin{cases}
\frac{\pi m}{1-m} - \frac{\pi m}{1+m} h_1(m) < p_1(m) \text{ for } m \in \left(\frac{2 k -1}{2 k +1},\frac{k}{k +1} \right),\\
p_1(m)<\frac{\pi m}{1-m}+\frac{\pi m}{1+m} h_2(m) \text{ for } m \in \left(\frac{k}{k +1},\frac{2 k + 1}{2 k +3} \right), \quad k \in \N.
\end{cases}
\end{equation}
The following inequalities are valid for $1< m < 3$:
\begin{equation}\label{p1hyp2mgr1}
\begin{cases}
p_1(m)<\frac{\pi m}{m-1}+\frac{\pi m}{1+m} h_2(\frac{1}{m}) \text{ for } m \in \left(\frac{2 k +3}{2 k +1},\frac{k+1}{k} \right),\\
\frac{\pi m}{m-1} - \frac{\pi m}{1+m} h_1(\frac{1}{m}) < p_1(m) \text{ for } m \in \left(\frac{k+1}{k},\frac{2k+1}{2 k -1} \right), \quad k \in \N.
\end{cases}
\end{equation}
\item[e)]For $0<m<\frac13$ the function $p_1(m)$ has the following estimates:
\begin{equation}\label{p1m013}
\max\{\rho m, \frac{\pi m}{1-m}\}<p_1(m)<\frac{3 \pi m}{2},
\end{equation}
where $\rho = 4.493409\dots$ is the root of the equation $\tan x = x$ that lies in the range $\pi<x<\frac{3 \pi}{2}$.

For $m>3$ the function $p_1(m)$ has the following estimates:
\begin{equation}\label{p1gr3}
\max\{\rho, \frac{\pi m}{m-1}\}<p_1(m)<\frac{3 \pi}{2}.
\end{equation}
\item[f)] At the point $p = p_1(m)$ the function $p \mapsto g_1(p,m)$ changes its sign.
\end{enumerate} 
\end{theorem}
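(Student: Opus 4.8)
The plan is to reduce everything to the single trigonometric identity obtained from the product-to-sum formulas,
\[
2g_1(p,m)=(1-m)\sin\frac{(m+1)p}{m}+(1+m)\sin\frac{(m-1)p}{m},
\]
which exhibits $g_1$ as a superposition of two sinusoids of different frequencies. Expanding near $p=0$ gives $g_1(p,m)=\frac{m^2-1}{3m^2}p^3+O(p^5)$, so $g_1$ is negative for small $p>0$ when $m\in(0,1)$ and positive when $m>1$; this fixes the sign from which the first sign change is sought. The crucial geometric observation is that along the hyperbola $p=\frac{\pi m}{1-m}$ (for $m<1$) one has $\frac{p}{m}=p+\pi$, so the second sine vanishes and $g_1$ collapses to $\frac{m-1}{2}\sin 2p$; the analogous statement holds along $p=\frac{\pi m}{m-1}$ for $m>1$. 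I would treat the two regimes $m<1$ and $m>1$ in parallel, using the relation $g_1(p/m,1/m)=-\frac1m g_1(p,m)$, so that the substitution $m\mapsto\frac1m$, $p\mapsto p/m$ reduces $m>1$ to $m<1$ while preserving the order of roots.

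For parts (a), (c), (f) I would bracket $p_1(m)$ against the hyperbola by the intermediate value theorem. Since $g_1|_{\mathrm{hyp}}=\frac{m-1}{2}\sin 2p$ vanishes precisely when $p=\frac{\pi k}{2}$, i.e. at $m=\frac{k}{k+2}$, these are exactly the points where the hyperbola value $\frac{\pi m}{1-m}=\frac{\pi k}{2}$ is itself a root of $g_1$; one then checks it is the minimal root, which yields the pinned values in (a). Off these points the sign of $g_1$ on the hyperbola alternates, and comparing it with the sign near $p=0$ decides on which side the first root lies, giving the wrapping (c). The ``easy'' direction ($p_1<\mathrm{hyp}$) is immediate from the intermediate value theorem once $g_1$ has opposite signs at $0^+$ and on the hyperbola; the ``hard'' direction ($p_1>\mathrm{hyp}$) requires ruling out any zero of $g_1$ on $(0,\mathrm{hyp})$, for which I would use the amplitude form above, noting that $\frac{(m-1)p}{m}\in(-\pi,0)$ keeps one sine of fixed sign and bounding the other term by its amplitude. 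Part (f) then follows because, away from the degenerate points, the located root is simple.

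For the regularity statements (a), (b) I would apply the implicit function theorem to $g_1(p_1(m),m)=0$. A direct computation gives $\partial_p g_1=\frac{1-m^2}{2m}\bigl(\cos\frac{(m+1)p}{m}-\cos\frac{(m-1)p}{m}\bigr)$, so $p_1'(m)=-\partial_m g_1/\partial_p g_1$ is finite and of definite sign wherever $\partial_p g_1\neq0$, giving continuity and strict monotonicity. The vertical tangents arise exactly where $\partial_p g_1=0$ at the root: at $m=\frac{k}{k+1}$, $p=\pi k$ one has $\frac{(m-1)p}{m}=-\pi$ and $\frac{(m+1)p}{m}=(2k-1)\pi$, both cosines equal $-1$ and $\partial_p g_1$ vanishes, whereas at $m=\frac{2k-1}{2k+1}$ the two cosines are $+1$ and $-1$ and the root stays simple; this separates the even- and odd-indexed crossing points and produces the $\pm\infty$ derivatives.

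The two-sided estimates (d), (e) are the most technical and are where I expect the real work. For (e) I would rescale $p=m\xi$ and observe that $g_1(m\xi,m)/m\to\xi\cos\xi-\sin\xi$ as $m\to0$, whose first positive zero is the root $\rho$ of $\tan\xi=\xi$; the same function reappears as $m\to\infty$ through $g_1\to\sin p-p\cos p$, which explains the constant $\rho$ and the barrier $\frac{3\pi}{2}$. Rigorous bounds follow by estimating the discarded remainders and comparing $g_1$ with $\sin p-p\cos p$. For (d) I would expand $g_1$ about the hyperbola, $p=\frac{\pi m}{1-m}+\delta$, and show that $g_1$ already has a definite sign once $|\delta|$ exceeds $\frac{\pi m}{1+m}h_i(m)$; the three competing terms in $h_1,h_2$ correspond to three sufficient sign conditions, one of them solving $(1+m)\sin\theta=1-m$, which is the origin of the $\arcsin\frac{1-m}{1+m}$, and the minimum records the sharpest provable bound in each subinterval. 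The main obstacle throughout is the hard direction of the bracketing together with the explicit distance estimates in (d), (e): these demand global, quantitative control of $g_1$ away from $p=0$, rather than the purely local or sign-based arguments that suffice for (a)--(c), (f).
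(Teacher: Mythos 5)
Your core reduction is, up to the substitution $x=\frac{(1-m)p}{m}$, $s=\frac{1+m}{1-m}$, exactly the paper's: the identity $2g_1=(1-m)\sin\frac{(m+1)p}{m}+(1+m)\sin\frac{(m-1)p}{m}$ is the paper's $g_1=\frac{m-1}{2}\tilde g(x,s)$ with $\tilde g(x,s)=s\sin x-\sin(sx)$, your hyperbola is the line $x=\pi$, and your pinned values and sign alternation along it do give part a) and the ``easy'' halves of c). The genuine gap is the mechanism for the hard direction, and it is not a detail that more care fixes. Bounding $\frac{1-m}{2}\sin\frac{(m+1)p}{m}$ by its amplitude shows $g_1<0$ only where $(1+m)\,|\sin\frac{(1-m)p}{m}|>1-m$, i.e.\ for $x\in(\arcsin\frac1s,\,\pi-\arcsin\frac1s)$. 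That is precisely the paper's \emph{preliminary} estimate $\pi-\arcsin\frac1s\le x_1(s)\le\pi+\arcsin\frac1s$; it says nothing about the window $|x-\pi|\le\arcsin\frac1s$, which is exactly where the first root lies and where the statement to be proved (root below $\pi$ for $s\in(2k,2k+1)$, above $\pi$ for $s\in(2k+1,2k+2)$) is decided. The paper closes this with a separate argument: the critical points of $\tilde g'_x=s(\cos x-\cos sx)$ are $x=\frac{2\pi n}{s\pm1}$; an arithmetic count shows which of them land in $[\pi-\frac{\pi}{2s},\pi]$ resp.\ $[\pi,\pi+\frac{\pi}{2s}]$; and at each one $\tilde g=(s\pm1)\sin\frac{2\pi n}{s\pm1}$ has the right sign, so $\tilde g$ cannot cross zero before $x=\pi$ (or must cross just after). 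Without this device you get neither the second family of inequalities in c), nor part d), whose constants $\frac{\pi}{s}\rho(\frac s2)$ and $\frac{\pi}{s}(2k+1-s)^{1/3}$ come from explicit cubic bounds $\alpha-\frac{\alpha^3}{6}<\sin\alpha<\alpha-\frac{2\alpha^3}{15}$ applied inside that same window, plus a reflection lemma pairing parameters with $s_-+s_+=2\nu$.

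Two smaller but real gaps. For b), ``$p_1'$ is finite wherever $\partial_p g_1\ne0$'' is not a proof: you must show $\partial_p g_1=\frac{m^2-1}{m}\sin p\sin\frac pm$ is nonzero at $p=p_1(m)$ for \emph{every} non-exceptional $m$ (the paper checks $g_1\ne0$ at all points $p=\pi k_1$ and $p=\pi k_2 m$ in the relevant range); strict monotonicity needs a sign for $\partial_m g_1$ at the root, which the paper extracts by treating $p+p\cot^2\frac pm-m\cot\frac pm$ as a quadratic in $\cot\frac pm$ with discriminant $m^2-4p^2<0$; and continuity of $p_1$ at the cusps $m=\frac{k+1}{k}$ needs its own argument (monotonicity plus the implicit function theorem applied to $m$ as a function of $p$), since the version solving for $p$ fails there. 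For e), the upper bound $\frac{3\pi m}{2}$ is a single sign evaluation, but the strict lower bound $p_1(m)>\rho m$ for all $m\in(0,\frac13)$ cannot come from estimating remainders against the limit function $\xi\cos\xi-\sin\xi$, which vanishes at $\xi=\rho$: a remainder bound cannot decide on which side of $\rho$ the root falls. The paper instead rewrites the equation exactly as $f(p/m)=f(p)$ with $f(y)=y\cot y$ and uses $f(p)<1=f(\rho)$ together with the monotonicity of $f$ on $(\pi,\frac{3\pi}{2})$, which yields the strict inequality for every such $m$, not just asymptotically.
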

\begin{figure}
\includegraphics[width=\textwidth]{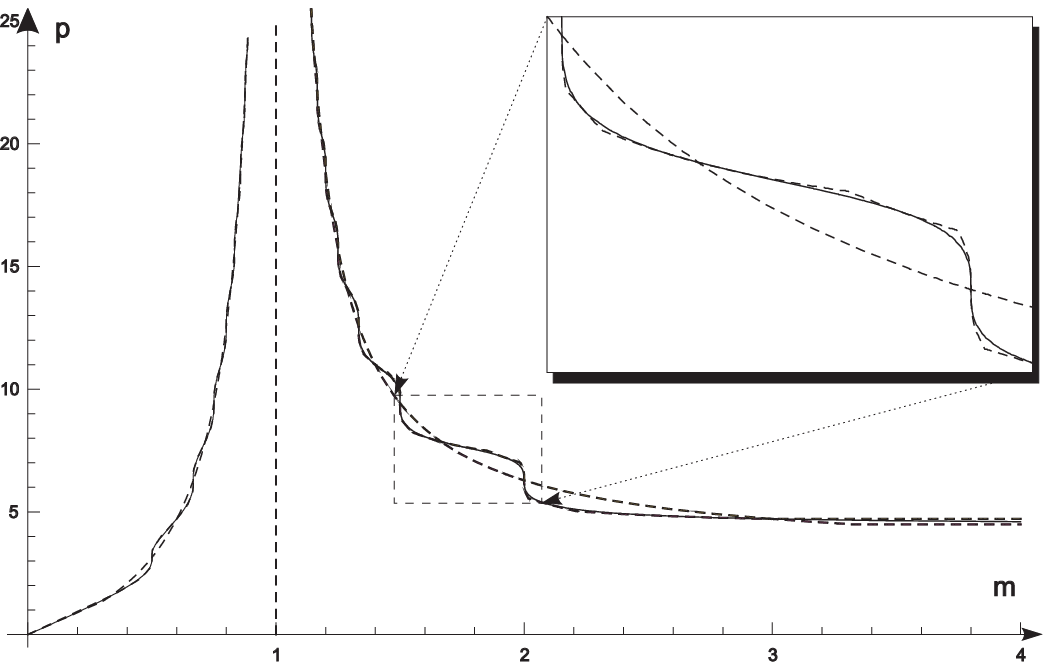}
\caption{Plot of the function $p_1(m)$ with two-sided estimates}
\label{fig:p1bounds}
\end{figure}
Figure \ref{fig:p1bounds} shows the plot of $p_1(m)$ with two-sided estimates (dashed lines).
\begin{remark}
If $m \in [\frac{2 k -1}{2 k +1},\frac{k}{k +1}]$, then the values of $s(m) = \frac{1+m}{1-m}$ range over the interval $[2k,2k+1]$, $s\left(\frac{2 k -1}{2 k +1}\right) = 2 k$, $s\left(\frac{k}{k +1}\right) = 2 k+1$. Consequently, 
$$0\leq h_1(m)<0.3264, \lim_{m\to\frac{2 k -1}{2 k +1}+0}h_1(m)=0, \lim_{m\to\frac{k}{k +1}-0}h_1(m)=0.$$
If $m \in [\frac{k}{k +1},\frac{2 k+1}{2 k +3}]$, then the values of $s(m)$ range over the interval $[2k+1,2k+2]$, $s\left(\frac{2 k +1}{2 k +3}\right) = 2 k+2$. Consequently, 
$$0\leq h_2(m)<0.3244, \lim_{m\to\frac{k}{k +1}+0}h_2(m)=0, \lim_{m\to\frac{2 k+1}{2 k +3}-0}h_2(m)=0.$$  
\end{remark}
\begin{remark}
From (\ref{p1hypm01})--(\ref{p1hyp2mgr1}) it follows that $p_1(m)$ tends to $+\infty$ as $m\to 1 \pm 0$.
\end{remark}
\subsection{Estimation of $p_1(m)$ for $0<m<\frac{1}{3}$}
Let us prove that equation (\ref{g1}) has a root in the interval $0<p<\frac{3 \pi m}{2}<\frac{\pi}{2}$. In this interval the function $\sin p$ does not vanish. The function $\sin \left(\frac{p}{m}\right)$ vanishes only at $p=\pi m$ but this point is not a root of equation (\ref{g1}), since $g_1(\pi m,m)=-\sin(\pi m) \neq 0$. Therefore, dividing the equation $g_1(p,m)=0$ by their product we obtain in the interval $0<p<\frac{\pi}{2}$ equivalent equation
\begin{equation}
\label{eq:g1symv}
\cot \left(\frac{p}{m}\right)= m\cot p \Leftrightarrow f\left(\frac{p}{m}\right)=f(p) ,\text{ where } f(x)=x\cot(x).
\end{equation}
Put $y=\frac{p}{m}$. Then equation (\ref{eq:g1symv}) has the form $f(y)=f(my)$. The function $f$ decreases over the intervals $(0,\pi)$, $(\pi, 2\pi)$ and satisfies the following equalities:
$$\lim_{y\to 0+} f(y) =1, \quad \lim_{y\to \pi-0} f(y) =-\infty, \quad \lim_{y\to \pi+0} f(y) =+\infty, \quad f(\frac{3\pi}{2}) =0.$$ 
It follows from decrease of $f(y)$ over the interval $0<y<\pi$ that the equation $f(y)=f(my)$ has no roots for $0<m<1$ in this interval. But a root exists in the interval $\pi<y<\frac{3\pi}{2}$ since the difference $f(y)-f(my)$ is continuous and tends to $+\infty$ as $y\to\pi+0 $ and is negative at $\frac{3 \pi}{2} $ (because of condition $m <\frac{1}{3}$ we have $f \left(\frac{3 \pi m }{2} \right)> 0 $). Denote this root by $y_0$. Since $f(my)<1$ for $0 <y <\frac{\pi}{m}$, it is clear that $y_0(m)$ exceeds the number $\rho$ defined by the equation $f(\rho) = 1$, $\pi <\rho <\frac{3\pi}{2}$. Consequently, $\rho<y_0(m)<\frac {3 \pi}{2}$. Thus for $0<m<\frac{1}{3}$ we obtain the inequality:
\begin{equation}
\label{eq:p1m013}
\rho m < p_1(m) < \frac{3\pi m}{2},
\end{equation}
where $\rho=4.493409\dots$ is the root of the equation $\tan x = x$ that lies in the interval $\pi<x<\frac{3\pi}{2}$.

From (\ref{eq:p1m013}) it follows immediately that for $0<m<\frac13$ the root $p_1(m)$ lies in the interval $0<p<\frac{\pi}{2}$, as it was claimed at the beginning of this section. The lower estimate in (\ref{eq:p1m013}) needs to be improved as $m\rightarrow \frac{1}{3}-0$. This improvement will be done below in Section \ref{sec_4_3}.
\subsection {Reducing the problem to finding (estimation) of the minimal positive root of a simpler equation $\tilde{g}(x,s)=0$, $s>1$, $\tilde{g}(x,s)=s \sin x - \sin (sx)$.} \label{subsectildeg1}
It can easily be checked that
\begin{eqnarray*}
& g_1(p,m)=\frac{m+1}{2}\left(\sin p \cos \frac{p}{m}-\cos p \sin \frac{p}{m}\right)-\frac{m-1}{2}\left(\sin p \cos \frac{p}{m}+ \cos p \sin \frac{p}{m}\right).
\end{eqnarray*}
Since $\sin p \cos \frac{p}{m} \pm \cos p \sin \frac{p}{m} = \sin \left(p\pm \frac{p}{m}\right)$, we see that the function $g_1$ admits the following representation: 
\begin{eqnarray*}
& g_1(p,m)=\frac{m+1}{2}\sin  \left( p \left (1-\frac{1}{m}\right) \right)- \frac{m-1}{2} \sin  \left(p\left(1+\frac{1}{m}\right)\right)= \frac{1-m}{2} \sin  \left(p\frac{m+1}{m}\right)-\frac{1+m}{2} \sin  \left(p\frac{1-m}{m}\right).
\end{eqnarray*}
Let us introduce a parameter $s=\frac{1+m}{1-m}$ and a new variable $x=p\frac{1-m}{m}$. Now express $g_1(p,m)$:
\begin{eqnarray*}
g_1(p, m) = \frac{1-m}{2} \sin (sx) - \frac{1+m}{2} \sin x = \frac{1-m}{2}\left(\sin (sx) - s \sin x \right)= \frac{m-1}{2}\tilde{g}(x,s).
\end{eqnarray*}
Thus, we study the function $$x_1(s)= \min \{x>0| \tilde{g}(x,s)=0\} \text{ for } s>1.$$
\subsection {Absence of roots of $\tilde{g}(x,s)$ in the half-interval $0<x \leq \pi$ for $s \in (1,2)$. Improvement of estimate (\ref{eq:p1m013})}\label{sec_4_3}
Since $\tilde{g}(0,s)=0$ and $\tilde{g}(\pi, s)=-\sin (\pi s)> 0$, we see that from existence of a root in the interval $(0, \pi)$  (at $\pi$ there is no root) it follows that $\tilde{g}(x,s)$ in $[0, \pi]$ has a minimum at some point in the interval $(0, \pi)$. Derivative of $\tilde{g}$ at this point is equal to zero and the value of $\tilde{g}$ is nonpositive. Thus, existence of a root $\tilde{g}(x,s)$ in the interval $0<x<\pi$ for $s\in(1,2)$ implies existence of a critical point in $(0,\pi)$, where the value of the function $\tilde{g}$ is nonpositive. 

Let us find critical points (roots of the derivative by $x$) of the function $\tilde{g}(x,s)$ for any $s$. We have $\tilde{g}'_x(x,s)=s(\cos x - \cos (sx))$. Consequently,
\begin{align}
\tilde{g}'_x(x,s)=0 \Leftrightarrow \cos x = \cos sx \Leftrightarrow 
\begin{cases}
x = s x - 2\pi n_1,\quad n_1 \in \Z\\
-x = s x - 2\pi n_2,\quad n_2 \in \Z
\end{cases}
\Leftrightarrow \nonumber  \\
\Leftrightarrow x \in\{\frac{2 \pi n_1}{s-1}|n_1 \in \Z\},\qquad x \in\{\frac{2 \pi n_2}{s+1}|n_2 \in \Z\}. \label{eq:criticalPoints_g1}
\end{align}

Since $s\in(1, 2)$, we see that the interval $(0, \pi)$ contains only one point of form  (\ref{eq:criticalPoints_g1}). Namely,  $x=\frac{2\pi}{s+1}$. Compute the value of $\tilde{g}$ at this point: 
\begin{eqnarray*}
\tilde{g}\left(\frac{2\pi}{s+1},s\right)= 
&& s \sin \left(\frac{2\pi}{s+1}\right)- \sin \left(\frac{2\pi s}{s+1}\right)= s \sin \left(\frac{2\pi}{s+1}\right)+\\
&&+\sin \left(2\pi -\frac{2\pi s}{s+1} \right)=s \sin \left(\frac{2\pi}{s+1}\right)+ \sin\left(\frac{2\pi}{s+1}\right)=\\
&&=(s+1)\sin\left(\frac{2\pi}{s+1}\right)>0.
\end{eqnarray*}
This inequality contradicts the conclusion (presented above) from the assumption of existence of a root $\tilde{g}$ in $(0, \pi)$. Hence, $\tilde{g}(x, s)$ has no root in $(0, \pi]$ for $s\in(1, 2)$ and we obtain the following inequality:
$$ x_1 (s)> \pi \text{ for } s\in(1, 2) \Rightarrow p_1 > \frac{\pi m}{1-m},\quad m\in(0, \frac{1}{3}).$$
So, we improve estimates (\ref{eq:p1m013}) as follows:
\begin{equation}
\max\left(\rho m, \frac{\pi m}{1-m}\right)< p_1(m)<\frac{3\pi m }{2},\quad  0<m<\frac{1}{3}.
\label{eq:p1m013prec}
\end{equation}
It is important that the functions in lower and upper bound (\ref{eq:p1m013prec}) have the same limits as $m\rightarrow 0+$ and as $m\rightarrow\frac{1}{3}-0$.
Also it is easy to study the case $m=\frac{1}{3}$ (i.e. $s=2$):
$$p_1\left(\frac{1}{3}\right)=\frac{\pi}{2}.$$
\subsection {Preliminary two-sided estimate of $x_1(s)$, $s>2$}
First we prove that
\begin{equation}
\pi - \arcsin \left(\frac{1}{s}\right) \leq x_1(s) \leq \pi + \arcsin \left(\frac{1}{s}\right) \quad  \forall s >2.
\label{eq:p1_general}
\end{equation}
Existence of a root on the interval $\pi-\arcsin \left(\frac{1}{s}\right)\leq x \leq \pi+ \arcsin \left(\frac{1}{s}\right)$ immediately follows from the relations
\begin {align}
\tilde{g} \left(\pi \pm \arcsin \left(\frac{1}{s}\right), s \right) = \pm 1-\sin\left(\pi s \mp s \arcsin\left(\frac{1}{s}\right)\right)\Rightarrow \nonumber \\
\tilde{g}\left(\pi - \arcsin \left(\frac{1}{s}\right),s\right)\geq 0, \quad  \tilde{g}\left(\pi+\arcsin \left(\frac{1}{s}\right),s\right)\leq 0.
\label{eq:g1_6}
\end {align}

Therefore $x_1(s)\leq \pi + \arcsin \left(\frac{1}{s}\right)$. To prove estimate (\ref{eq:p1_general}) it remains to check that for any $s>2$ the function $\tilde{g}(x,s)$ has no roots for $0<x< \pi- \arcsin \left(\frac{1}{s}\right)$. It is clear that $s \sin x> 1$ when $\arcsin \left(\frac{1}{s}\right)<x< \pi -\arcsin \left(\frac{1}{s}\right)$.  Hence, $\tilde{g}(x,s)>0$. In the half-interval  $0<x\leq \arcsin\left(\frac{1}{s}\right)$ the function $\tilde{g}(x,s)$ increases (due to decrease of $\cos t$ on $0\leq t\leq \pi$ we have $\cos (s x)< \cos x$ for $0< x< \arcsin \left(\frac{1}{s}\right)\Rightarrow \tilde{g}'_x(x, s)=s (\cos x - \cos (s x))>0$). Since $\tilde{g}(0, s)=0$, we see that the function $\tilde{g}(x, s)$ is positive for $0< x \leq \arcsin \left(\frac{1}{s}\right)$. So, two-sided inequality (\ref{eq:p1_general}) is proved. Now we improve it.
\begin{lemma}\label{lemma1}
If $s\in(2k, 2k+1)$, $k\in \N$, then the function $\tilde{g}(x, s)$ has a unique root in the half-interval  $\pi-\arcsin\left(\frac{1}{s}\right)\leq x <\pi$. If $s\in(2k+1, 2k+2)$, $k\in \N$, then the function $\tilde{g}(x,s)$ has no roots on the interval $\pi-\arcsin \left(\frac{1}{s}\right)\leq x \leq \pi$ and has a unique root in the half-interval $\pi< x \leq \pi + \arcsin\left(\frac{1}{s}\right)$. 
\end {lemma}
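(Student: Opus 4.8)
The plan is to reduce the whole lemma to one structural fact about the critical points of $x\mapsto\tilde g(x,s)$, which are exactly the points listed in \eqref{eq:criticalPoints_g1}. At a first‑type critical point $sx=x+2\pi n_1$, so $\sin(sx)=\sin x$ and hence $\tilde g=(s-1)\sin x$; at a second‑type point $sx=-x+2\pi n_2$, so $\sin(sx)=-\sin x$ and $\tilde g=(s+1)\sin x$. In either case $\sgn\tilde g=\sgn\sin x$. I would record this as the main observation: every interior local extremum of $\tilde g$ in $(0,\pi)$ has a strictly positive value, and every interior local extremum in $(\pi,2\pi)$ has a strictly negative value. In particular $\tilde g$ has no nonpositive local minimum in $(0,\pi)$ and no nonnegative local maximum in $(\pi,2\pi)$.

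Next I would fix the sign of $\tilde g$ at the midpoint. Since $\tilde g(\pi,s)=-\sin(\pi s)$, for $s\in(2k,2k+1)$ one has $\sin(\pi s)=\sin((s-2k)\pi)>0$, so $\tilde g(\pi,s)<0$, while for $s\in(2k+1,2k+2)$ one gets $\tilde g(\pi,s)>0$. Combined with the facts already in hand — that $\tilde g(x,s)>0$ on $(0,\pi-\arcsin(1/s))$ and the boundary inequalities \eqref{eq:g1_6} — this supplies all the sign data at the ends of the intervals in question.

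For $s\in(2k,2k+1)$ I would run a short ``no rebound'' argument. Since $\tilde g>0$ near $0^+$ and $\tilde g$ has no nonpositive local minimum in $(0,\pi)$, after a first downcrossing $\tilde g$ must remain negative: returning to zero would force an interior local minimum with a nonpositive value, which the main observation forbids. As $\tilde g(\pi,s)<0$, exactly one zero occurs in $(0,\pi)$, and the prior positivity on $(0,\pi-\arcsin(1/s))$ places it in $[\pi-\arcsin(1/s),\pi)$, which is uniqueness there. For $s\in(2k+1,2k+2)$ the same principle together with $\tilde g(\pi,s)>0$ forces $\tilde g>0$ on all of $(0,\pi)$ (a zero would make $\tilde g\le 0$ at $\pi$); in particular $\tilde g$ has no root on $[\pi-\arcsin(1/s),\pi]$, settling the first assertion of the second case.

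It remains to produce the single root in $(\pi,\pi+\arcsin(1/s)]$ for $s\in(2k+1,2k+2)$, and here I would switch to an explicit monotonicity computation, which I expect to be the main obstacle. Writing $s=2k+1+\tau$ with $\tau\in(0,1)$ and $v=x-\pi$, a sum‑to‑product manipulation gives $\tilde g'_x=-2s\,\sin\!\big(\tfrac{(s+1)v+\tau\pi}{2}\big)\sin\!\big(\tfrac{(s-1)v+\tau\pi}{2}\big)$ for $v\in(0,\arcsin(1/s)]$. Using $\arcsin(1/s)\le\tfrac{\pi}{2s}$ and $s>3$ (since $k\ge1$), one gets $(s+1)\arcsin(1/s)<\tfrac{2\pi}{3}$ and $(s-1)\arcsin(1/s)<\tfrac{\pi}{2}$, so both arguments stay inside $(0,\pi)$, both sines are positive, and $\tilde g$ is strictly decreasing on $(\pi,\pi+\arcsin(1/s)]$. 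Since $\tilde g(\pi,s)>0\ge\tilde g(\pi+\arcsin(1/s),s)$ by \eqref{eq:g1_6}, strict monotonicity yields exactly one root, even in the borderline case when the right endpoint value vanishes. The delicate part is precisely this uniform confinement of both sine arguments to $(0,\pi)$ for all $k$; the crude estimate $\arcsin(1/s)\le\pi/(2s)$ is what makes it clean, and the strict monotonicity (rather than the root count alone) is what I would use to avoid the endpoint subtlety.
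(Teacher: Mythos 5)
Your proof is correct, but it is organized around a different mechanism than the paper's. The paper works locally: it isolates the narrow windows $I_s=[\pi-\frac{\pi}{2s},\pi]$ and $\hat I_s=[\pi,\pi+\frac{\pi}{2s}]$, verifies by four elementary inequalities exactly which critical points of form \eqref{eq:criticalPoints_g1} fall inside them, evaluates $\tilde g$ there (obtaining $(s\mp1)\sin x>0$ at those specific points), and deduces monotonicity, hence uniqueness, inside each window; the sign data at the window endpoints then yields existence. You instead promote the identity $\tilde g=(s\mp1)\sin x$ at critical points to a global statement --- every critical value on $(0,\pi)$ is positive, every critical value on $(\pi,2\pi)$ is negative --- which is exactly the trick the paper already uses for $s\in(1,2)$ in the preceding subsection, now applied uniformly in $s$. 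This lets you replace the paper's critical-point bookkeeping on $(0,\pi)$ with a clean ``no rebound'' argument (any tangential zero or second crossing would create a nonpositive interior minimum, which is forbidden), giving at once the unique root in $[\pi-\arcsin(1/s),\pi)$ for $s\in(2k,2k+1)$ and the absence of roots on $[\pi-\arcsin(1/s),\pi]$ for $s\in(2k+1,2k+2)$. You correctly recognize that this sign argument cannot give uniqueness on $(\pi,\pi+\arcsin(1/s)]$ (negative interior minima are permitted there), and your explicit factorization $\tilde g'_x=-2s\sin\bigl(\tfrac{(s+1)v+\tau\pi}{2}\bigr)\sin\bigl(\tfrac{(s-1)v+\tau\pi}{2}\bigr)$ with the confinement $(s\pm1)\arcsin(1/s)\leq\frac{(s\pm1)\pi}{2s}$ is a valid substitute for the paper's exclusion of critical points from $\hat I_s$; both amount to strict monotonicity there. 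The trade-off: your route is more conceptual and avoids the case-by-case inequalities, while the paper's localization yields, as a by-product, the explicit list of critical points near $\pi$ that it reuses in the sharper estimates of Proposition \ref{propos:g1_2}.
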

\begin{corollary} The function $x_1(s)$ has the following estimates
\begin {align}
\begin{cases}
\pi - \arcsin \left(\frac{1}{s}\right)\leq x_1(s)< \pi, \quad s\in(2k, 2k+1), \\
\pi< x_1(s)\leq \pi+\arcsin \left(\frac{1}{s}\right),  \quad s\in(2k+1, 2k+2),\quad  k\in \N.
\end{cases}
\label{corol1}
\end {align}
\end{corollary}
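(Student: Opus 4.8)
The plan is to determine the sign of $\tilde g(x,s)$ throughout the interval $[\pi-\arcsin(1/s),\pi+\arcsin(1/s)]$, which already brackets the minimal root by \eq{eq:p1_general}, by combining three inputs: the endpoint signs recorded in \eq{eq:g1_6}, the value $\tilde g(\pi,s)=-\sin(\pi s)$, and an explicit evaluation of $\tilde g$ at its critical points. The whole argument hinges on the last of these.

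First I would evaluate $\tilde g$ at its critical points. By \eq{eq:criticalPoints_g1} every critical point $x^*$ satisfies $\cos(s x^*)=\cos x^*$, hence either $s x^*\equiv x^*\pmod{2\pi}$, so that $\sin(s x^*)=\sin x^*$ and $\tilde g(x^*)=(s-1)\sin x^*$, or $s x^*\equiv -x^*\pmod{2\pi}$, so that $\sin(s x^*)=-\sin x^*$ and $\tilde g(x^*)=(s+1)\sin x^*$. Since $s>1$, in either case $\sign\tilde g(x^*)=\sign\sin x^*$. Therefore every critical point in $(0,\pi)$ has $\tilde g(x^*)>0$ and every critical point in $(\pi,2\pi)$ has $\tilde g(x^*)<0$; this dichotomy is the key observation.

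Next I would split according to the sign of $\tilde g(\pi,s)=-\sin(\pi s)$, which is negative for $s\in(2k,2k+1)$ and positive for $s\in(2k+1,2k+2)$, while in both cases $\tilde g'_x(\pi,s)=-s(1+\cos\pi s)<0$, so $\tilde g$ is strictly decreasing at $\pi$. To locate the critical points I would use their explicit form $x=2\pi m/(s\pm1)$: a short computation gives that for $s\in(2k,2k+1)$ the nearest critical point below $\pi$ lies at distance greater than $\pi/(s+1)$, and for $s\in(2k+1,2k+2)$ the nearest one above $\pi$ lies at distance greater than $\pi/(s+1)$. Since $\arcsin(1/s)\leq\pi/(2s)<\pi/(s+1)$ for $s>1$ (by convexity of $\arcsin$), the half-interval $[\pi-\arcsin(1/s),\pi)$ in the first case, respectively $(\pi,\pi+\arcsin(1/s)]$ in the second, is free of critical points. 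On such a half-interval $\tilde g$ is therefore strictly monotone (decreasing, by $\tilde g'_x(\pi,s)<0$), and it passes from a nonnegative value at one end to a nonpositive value at the other — the relevant endpoint values being $\tilde g(\pi,s)=-\sin\pi s$ and the expressions in \eq{eq:g1_6} — hence it has exactly one root there. This produces the unique root in $[\pi-\arcsin(1/s),\pi)$ when $s\in(2k,2k+1)$ and the unique root in $(\pi,\pi+\arcsin(1/s)]$ when $s\in(2k+1,2k+2)$.

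The hard part will be the assertion that $\tilde g$ has no root on $[\pi-\arcsin(1/s),\pi]$ when $s\in(2k+1,2k+2)$: here critical points of $\tilde g$ approach $\pi$ from the left as $s\to(2k+1)+0$, so the monotonicity argument fails and I must use the dichotomy instead. I would examine the minimum of $\tilde g$ over the compact interval $[\arcsin(1/s),\pi]$. It is attained either at an interior point, which is then a critical point lying in $(0,\pi)$ and hence has $\tilde g>0$, or at an endpoint, where $\tilde g(\arcsin(1/s))>0$ (established in the proof of \eq{eq:p1_general}) and $\tilde g(\pi)>0$ in the present case. Thus the minimum is positive, so $\tilde g>0$ on all of $(0,\pi]$ and there are no roots there; together with $\tilde g(\pi+\arcsin(1/s),s)\leq0$ this also confirms that the first root of $\tilde g$ occurs inside $(\pi,\pi+\arcsin(1/s)]$. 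The recurring obstacle throughout is the control of critical points clustering near $\pi$, and the identity $\sign\tilde g(x^*)=\sign\sin x^*$ is precisely what replaces the lost monotonicity in this delicate case.
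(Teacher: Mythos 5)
Your proof is correct, and it shares the skeleton of the paper's proof of Lemma~\ref{lemma1}, from which the corollary is read off: existence comes from the intermediate value theorem between the endpoint signs (\ref{eq:g1_6}) and the sign of $\tilde g(\pi,s)=-\sin(\pi s)$, and uniqueness/monotonicity comes from the absence of critical points (\ref{eq:criticalPoints_g1}) in a one-sided neighbourhood of $\pi$ of width $\pi/(2s)$. The one genuinely different ingredient is how you exclude roots on $[\pi-\arcsin(1/s),\pi]$ for $s\in(2k+1,2k+2)$. The paper identifies the at most two critical points $\frac{2\pi k}{s-1}$ and $\frac{2\pi(k+1)}{s+1}$ that can lie in $I_s=[\pi-\frac{\pi}{2s},\pi]$ and evaluates $\tilde g$ at each, getting $(s-1)\sin(\cdot)>0$ and $(s+1)\sin(\cdot)>0$. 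You prove once and for all that at every critical point $x^*$ one has $\tilde g(x^*)=(s-1)\sin x^*$ or $(s+1)\sin x^*$, hence $\sign\tilde g(x^*)=\sign\sin x^*$, and then run a minimum-on-a-compact-interval argument on $[\arcsin(1/s),\pi]$, whose endpoints carry strictly positive values. This is exactly the computation the paper performs, but packaged as a uniform sign dichotomy; it frees you from the bookkeeping of which critical points actually fall in $I_s$ and yields the slightly stronger conclusion $\tilde g>0$ on all of $(0,\pi]$ in that case. The remaining details check out: the nearest critical point on the relevant side of $\pi$ is at distance greater than $\pi/(s+1)$, the bound $\arcsin(1/s)\leq\pi/(2s)<\pi/(s+1)$ holds for $s>1$, and $\tilde g'_x(\pi,s)=-s(1+\cos\pi s)<0$ since $s$ is not an odd integer on the open intervals considered.
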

\begin{proof}[Proof of Lemma \ref{lemma1}.] Assume that $s\in(2k, 2k+1)$, where $k \in \N$. Then, since $\tilde{g}(\pi, s) = - \sin (\pi s)<0$ and $\tilde{g}\left(\pi - \arcsin \left(\frac{1}{s}\right),s\right)\geq 0$, it follows that there exists a root of $\tilde{g}(x, s)$ in the half-interval $\pi - \arcsin \left(\frac{1}{s}\right)\leq x< \pi$. Uniqueness of the root follows from the constant sign of the derivative $\tilde{g}'_x(x,s)$ that is equivalent to absence of its roots. Let us show that for all $s>2$ and $x \in I_s=\left[ \pi-\frac{\pi}{2s}, \pi \right]$ we have $\tilde{g}'_x(x,s)\neq0$. Notice that $I_s$ contains the interval $[\pi - \arcsin \left(\frac{1}{s}\right), \pi]$, since for any $s>1$ we have $s\arcsin \left(\frac{1}{s}\right)< \frac{\pi}{2}$. For all $2k <s< 2k+1$ we see that $I_s$ does not contain any critical point (root of $\tilde{g}'_x$) presented in (\ref{eq:criticalPoints_g1}), since 
\begin{align*}
\frac{2\pi(k-1)}{s-1}<\pi-\frac{\pi}{2 s}, \quad \frac{2\pi k}{s-1}>\pi, \quad \frac{2\pi k}{s+1}<\pi-\frac{\pi}{2s}, \quad \frac{2\pi(k+1)}{s+1}>\pi.
\end {align*}
The second and the fourth inequalities are absolutely obvious. Let us check the first and the third inequalities. We have
\begin{align*}
\frac{2\pi(k-1)}{s-1}< \pi-\frac{\pi}{2s}\quad \Leftrightarrow \quad \frac{2k-2}{s-1}< 1-\frac{1}{2s}\Leftrightarrow \quad 2k-2< s-1-\frac{s-1}{2s} \quad \Leftrightarrow \quad 2k<s+\frac{1}{2}+\frac{1}{2s},
\end {align*}
that is true, since even $s>2k$. Further
\begin{align*}
\frac{2\pi k}{s+1}< \pi-\frac{\pi}{2s} \quad \Leftrightarrow \quad \frac{2k}{s+1}<1-\frac{1}{2s}\quad \Leftrightarrow \quad 2k<s+1-\frac{s+1}{2s} \quad \Leftrightarrow \quad 2k< s+\frac{1}{2}-\frac{1}{2s}.
\end {align*}
The last inequality is also true, since $s> 2k$ and $\frac{1}{2}-\frac{1}{2s}>0$.

Now, assume that $s\in(2k+1, 2k+2)$, where $k \in \N$. Let us prove that $\tilde{g}(x, s)$ has no roots on $I_s$. We have 
\begin{eqnarray*}
\tilde{g}\left(\pi-\frac{\pi}{2s}, s \right)= && s \sin\left(\frac{\pi}{2s}\right)-\sin \left(\pi s - \frac{\pi}{2} \right)= 
                                              s\sin \left(\frac{\pi}{2s}\right)+ \cos (\pi s )> 1+\cos(\pi s)>0,\\
\tilde{g}(\pi, s)= && -\sin(\pi s)>0.
\end{eqnarray*}
Thus, existence of a root on $I_s$ would imply existence of a critical point $\tilde{x} \in I_s$ such that $\tilde{g}(\tilde{x},s)\leq 0$. It is easy to prove that $I_s$ contains only two critical points (\ref{eq:criticalPoints_g1}), namely   
$$\frac{2\pi k}{s-1}, \qquad \frac{2\pi(k+1)}{s+1}.$$
Let us calculate the values $\tilde{g}\left(\frac{2\pi k }{s-1}, s\right)$, $\tilde{g}\left(\frac{2\pi (k+1)}{s+1}, s\right)$. We have 
\begin{multline*}
\tilde{g}\left(\frac{2\pi k }{s-1}, s\right)=
s \sin \left(\frac{2\pi k }{s-1}\right)- \sin \left(\frac{2 \pi k s}{s-1}\right)= \\
=s \sin \left(\frac{2 \pi k}{s-1}\right)- \sin \left(2\pi k +\frac{2 \pi k }{s-1}\right)=(s-1)\sin\left(\frac{2\pi k}{s-1}\right)>0,
\end{multline*}
\begin{multline*}
\tilde{g}\left(\frac{2 \pi (k+1)}{s+1}, s\right)= s \sin \left(\frac{2\pi(k+1)}{s+1}\right)-\sin\left(\frac{2\pi(k+1)s}{s+1}\right)=\\
=s\sin\left(\frac{2\pi(k+1)}{s+1}\right)- \sin \left(2\pi(k+1)- \frac{2\pi(k+1)}{s+1}\right)=(s+1)\sin\left(\frac{2\pi(k+1)}{s+1}\right)>0.
\end{multline*}
Thus we see that $\tilde{g}(x,s)$ has no roots in $I_s$. Then, since $\tilde{g}(\pi,s)=-\sin(\pi s)>0$ and $\tilde{g}(\pi +\arcsin\frac{1}{s})\leq 0$, it follows that there exists a root of $\tilde{g}(x, s)$ in the half-interval $\pi< x \leq \pi + \arcsin\left(\frac{1}{s}\right)$. Uniqueness of the root follows from the inequality $\tilde{g}'_x(x,s)\neq 0$ for all $s>2$ and $x \in \hat{I}_s=\left[ \pi, \pi + \frac{\pi}{2s} \right]$. Note that $\hat{I}_s$ contains the interval $\pi \leq x_1(s)\leq \pi + \arcsin \left(\frac{1}{s}\right)$. For all $2k+1 <s< 2k+2$ we see that $\hat{I}_s$ does not contain any critical point presented in (\ref{eq:criticalPoints_g1}), since 
\begin{align*}
\frac{2\pi k}{s-1}<\pi, \quad \frac{2\pi (k+1)}{s-1}>\pi + \frac{\pi}{2 s}, \quad \frac{2\pi (k+1)}{s+1}<\pi, \quad \frac{2\pi(k+2)}{s+1}>\pi + \frac{\pi}{2 s}.
\end {align*}
The first and the third inequalities are absolutely obvious. Let us check the second and the fourth inequalities. We have
\begin{align*}
\frac{2\pi(k+1)}{s-1}> \pi+\frac{\pi}{2s}\quad \Leftrightarrow \quad \frac{2k+2}{s-1}> 1+\frac{1}{2s}
\Leftrightarrow \quad 2k+2> s-1+\frac{s-1}{2s} \quad \Leftrightarrow \quad 2k+2>s-\frac{s+1}{2s},
\end {align*}
that is true, since $\frac{s+1}{2s}>0$ and even $s<2k+2$. Further
\begin{align*}
\frac{2\pi (k+2)}{s+1}> \pi+\frac{\pi}{2s} \ \Leftrightarrow \ 2k+4>s+1+\frac{s+1}{2s}\  \Leftrightarrow \ 2k+2>s-1+\frac{s+1}{2s} \ \Leftrightarrow \ 2k+2>s+\frac{1-s}{2s}.
\end {align*}
The last inequality is also true, since $s< 2k+2$ and $\frac{1-s}{2s}<0$.
This completes the proof of Lemma~\ref{lemma1}.
\end{proof}
\begin{proposition}\label{propos:g1_1}
There holds the equality 
\begin{equation}
x_1(s)=\pi, \quad \forall s \in \N, \quad s\geq 2. \label{eq:x1pi}
\end{equation}
\end{proposition}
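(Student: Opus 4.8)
The plan is to show that for integer $s \geq 2$ the point $\pi$ is a root of $\tilde{g}(\cdot,s)$ while there are no roots in $(0,\pi)$, so that $\pi$ is forced to be the minimal positive root. First I would observe that $\tilde{g}(\pi,s) = s\sin\pi - \sin(s\pi) = 0$ precisely because $s$ is an integer, so $\pi$ is a positive root; this is the only place where integrality of $s$ enters essentially, and it is exactly what distinguishes this borderline case from the non-integer case treated in Lemma~\ref{lemma1}.

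The core of the argument is the strict inequality
\begin{equation*}
\sin(sx) < s\sin x \quad \text{for all } x \in (0,\pi) \text{ and integer } s \geq 2,
\end{equation*}
from which $\tilde{g}(x,s) = s\sin x - \sin(sx) > 0$ on $(0,\pi)$, hence no root there. I would derive this from the classical estimate $|\sin(sx)| \leq s|\sin x|$, valid for every positive integer $s$, which is proved by induction using $\sin((s{+}1)x) = \sin(sx)\cos x + \cos(sx)\sin x$ together with $|\cos x| \leq 1$. Since $\sin x > 0$ on $(0,\pi)$, this already yields $\sin(sx) \leq |\sin(sx)| \leq s\sin x$.

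The one delicate point, which I expect to be the main obstacle, is upgrading this to a \emph{strict} inequality on the open interval $(0,\pi)$. I would analyze the equality case: if $|\sin(sx)| = s\sin x$ for some $x \in (0,\pi)$, then every inequality in the inductive chain must be an equality, and in particular the factors $|\cos x|$ and $|\cos((s{-}1)x)|$ that were bounded by $1$ must equal $1$ wherever they multiply a nonzero quantity. Because $|\cos x| < 1$ strictly on $(0,\pi)$, this forces $\sin((s{-}1)x) = 0$, which through the boundary equality $|\sin((s{-}1)x)| = (s{-}1)|\sin x|$ forces $\sin x = 0$, contradicting $x \in (0,\pi)$. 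This rules out equality and gives the strictness.

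Finally I would assemble the pieces: $\tilde{g}(x,s) > 0$ for $x \in (0,\pi)$ means $\tilde{g}(\cdot,s)$ has no root in $(0,\pi)$, while $\tilde{g}(\pi,s) = 0$; hence $\pi$ is the smallest positive root and $x_1(s) = \pi$, which is $(\ref{eq:x1pi})$. An alternative route for the strictness, worth noting, is the Chebyshev identity $\sin(sx) = U_{s-1}(\cos x)\sin x$, which reduces the claim to $U_{s-1}(t) < U_{s-1}(1) = s$ for $t \in (-1,1)$, i.e. that the maximum of the Chebyshev polynomial $U_{s-1}$ on $[-1,1]$ is attained only at the endpoint $t=1$; this bypasses the equality-case bookkeeping but relies on an external fact.
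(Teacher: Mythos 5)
Your proposal is correct, but it follows a different route from the paper. The paper does not re-derive positivity of $\tilde{g}(\cdot,s)$ on all of $(0,\pi)$; instead it invokes the already-established two-sided bound \eqref{eq:p1_general}, which confines the minimal positive root to $\left[\pi-\arcsin\left(\frac{1}{s}\right),\pi+\arcsin\left(\frac{1}{s}\right)\right]$, so that only the half-interval $\left[\pi-\arcsin\left(\frac{1}{s}\right),\pi\right)$ needs to be cleared. This is done by the reflection $y=\pi-x$, which for integer $s$ gives $\tilde{g}(\pi-y,s)=s\sin y+(-1)^s\sin(sy)\geq \tilde{g}(y,s)>0$ for $y\in\left(0,\arcsin\left(\frac{1}{s}\right)\right]$, the last positivity having been proved in the derivation of \eqref{eq:p1_general}. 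You instead prove the self-contained strict inequality $\sin(sx)<s\sin x$ on all of $(0,\pi)$ for integer $s\geq 2$, by induction on $s$ via $|\sin(sx)|\leq s|\sin x|$ together with a correct equality-case analysis (or equivalently via $U_{s-1}(t)<s$ for $t\in(-1,1)$). Your argument is independent of Lemma~\ref{lemma1} and of the preliminary estimate, and it yields the slightly stronger statement that $\tilde{g}(\cdot,s)$ has no zero anywhere in $(0,\pi)$ for integer $s$; the paper's argument is shorter given the machinery already in place and reuses the reflection idea that reappears in Lemma~\ref{lemma2}. One small inaccuracy in your commentary: integrality of $s$ enters not only at $\tilde{g}(\pi,s)=0$ but also, essentially, in the bound $|\sin(sx)|\leq s|\sin x|$ itself, which fails for non-integer $s$ (take $s=\tfrac32$ and $x$ near $\pi$); your induction correctly uses integrality, so this does not affect the validity of the proof.
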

\begin{proof}
Let us prove (\ref{eq:x1pi}). Since $s \in \N$ it can easily be checked that $\pi$ is a root of $\tilde{g}(x,s)$. In view of (\ref{eq:p1_general}), it remains to prove that $\tilde{g}(x,s)$ has no roots in the half-interval $\pi-\arcsin\left(\frac{1}{s}\right)\leq x < \pi$. Put $y=\pi-x$. Thus we have  
\begin{eqnarray*}
& \tilde{g}(x,s) =\tilde{g}(\pi-y,s)=s\sin(\pi-y)-\sin(\pi s - y s)= s\sin y -\\
&-(-1)^s \sin (-y s)= s\sin y + (-1)^s \sin ys \geq s \sin y - \sin ys = \tilde{g}(y,s).
\end{eqnarray*}
It was proved above that $\tilde{g}(y,s)>0$ for $y\in \left( 0, \arcsin \frac{1}{s}\right]$. So, $\tilde{g}(x,s)>0$ for $\pi-\arcsin\left(\frac{1}{s}\right)\leq x < \pi$. This completes the proof of Proposition \ref{propos:g1_1}. 
\end{proof}
\begin{remark}
Note that inequalities (\ref{corol1}) turn into equalities in an infinite set of values of the parameter $s$. In fact for any $k\in\N$ there exist $s_{1,k}\in\left(2k+\frac{1}{2}, 2k+1\right)$ and $s_{2,k}\in \left(2k+1, 2k+\frac{3}{2}\right)$ such that $x_1(s_{1,k})=\pi-\arcsin\left(\frac{1}{s_{1,k}}\right)$ and $x_1(s_{2,k})=\pi+\arcsin\left(\frac{1}{s_{2,k}}\right)$. 
\end{remark}
\subsection{More accurate estimate of $x_1(s)$}
Let $\rho(z)$, $z\in \R$ be the distance from $z$ to the closest integer number. For any $s\geq 2$ define a function $a(s)$ as follows: 
\begin{eqnarray*}
a(s) = \begin{cases}
\min\left( \frac{\pi}{s} \rho \left(\frac{s}{2}\right), \arcsin\left(\frac{1}{s} \right)\right) \text{ for } 0 \leq \rho \left(\frac{s}{2}\right)<\frac{7}{16}, \\
\min\left( \frac{\pi}{s}\left( 1- 2 \rho \left(\frac{s}{2}\right)\right)^{\frac13}, \arcsin\left(\frac{1}{s} \right)\right) \text{ for } \frac{7}{16} \leq \rho \left(\frac{s}{2}\right)\leq\frac{1}{2}.
\end{cases}
\end{eqnarray*}
Note that $a(s)$ is nonnegative, is equal to zero for all integers $s$, and tends to zero as $s\to +\infty$. Absolute value of derivative $a'(s)$ is equal to $\frac{\pi}{2s}$ when $s$ is odd number and is equal to $\infty$ when $s$ is even number.   
\begin{proposition} \label{propos:g1_2}
$x_1(s)$ admits the following estimates:
\begin {align*}
\pi-a(s)\leq x_1(s)< \pi, && 2k < s < 2k+1, \\
\pi< x_1(s)\leq \pi+a(s), && 2k+1<s<2k+2, \quad k \in \N.
\end {align*}
\end{proposition}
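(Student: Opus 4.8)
The plan is to reduce both two-sided bounds to a single sign condition for $\tilde{g}$ at the endpoint $\pi\mp a(s)$, and then to verify that condition by elementary estimates, the delicate one being governed by the exponent $\tfrac13$ in the definition of $a(s)$. First I would invoke the monotonicity already extracted in the proof of Lemma~\ref{lemma1}: there $\tilde{g}'_x(x,s)$ is shown not to vanish on $I_s=[\pi-\tfrac{\pi}{2s},\pi]$ when $2k<s<2k+1$, nor on $\hat I_s=[\pi,\pi+\tfrac{\pi}{2s}]$ when $2k+1<s<2k+2$. Since $s\arcsin\tfrac1s<\tfrac\pi2$ we have $a(s)\le\arcsin\tfrac1s<\tfrac{\pi}{2s}$, so $\tilde{g}(\cdot,s)$ is strictly decreasing on $[\pi-a(s),\pi]$ (respectively on $[\pi,\pi+a(s)]$), where it passes through the unique root $x_1(s)$ located by (\ref{corol1}). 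Hence
\[ x_1(s)\ge\pi-a(s)\iff \tilde{g}(\pi-a(s),s)\ge0,\qquad x_1(s)\le\pi+a(s)\iff \tilde{g}(\pi+a(s),s)\le0, \]
and it remains to prove the right-hand sign conditions. When $a(s)=\arcsin\tfrac1s$ these are exactly (\ref{eq:g1_6}), so I may assume the first argument of the $\min$ is active, i.e. $a(s)\le\arcsin\tfrac1s$.

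Next I would write $s=2k+\sigma$ and substitute $y=\pi-x$ (resp. $y=x-\pi$), which removes the multiple of $2\pi k$ and, after using $\sin(\pi-z)=\sin z$, expresses both sign conditions through the quantities $\mu=\rho(\tfrac s2)$ and $\tau=1-2\rho(\tfrac s2)$. The key observation is that the even interval $(2k,2k+1)$ and the odd interval $(2k+1,2k+2)$ collapse to the \emph{same} pair of scalar inequalities. On the branch $\rho(\tfrac s2)<\tfrac7{16}$, where $a(s)=\tfrac{\pi}{s}\mu$, the condition reads
\[ s\sin\Big(\frac{\pi\mu}{s}\Big)\ge\sin(\pi\mu),\qquad \mu\in[0,\tfrac7{16}), \]
which follows at once from concavity of $\sin$ on $[0,\pi]$: writing $\tfrac{\pi\mu}{s}=\tfrac1s(\pi\mu)+(1-\tfrac1s)\cdot0$ gives $\sin(\tfrac{\pi\mu}{s})\ge\tfrac1s\sin(\pi\mu)$, and $\pi\mu\le\pi$.

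The hard part is the branch $\rho(\tfrac s2)\ge\tfrac7{16}$, where $a(s)=\tfrac\pi s\tau^{1/3}$ with $\tau=1-2\rho(\tfrac s2)\in[0,\tfrac18]$, and the condition becomes
\[ s\sin\Big(\frac{\pi\tau^{1/3}}{s}\Big)\ge\sin\big(\pi(\tau^{1/3}+\tau)\big). \]
Here the crude concavity bound discards the summand $\pi\tau$ and is too weak; the cube root (and the threshold $\tfrac7{16}$ at which the two formulas for $a(s)$ are glued) is chosen precisely so that the leading cubic corrections on the two sides match. Setting $\theta=\pi\tau^{1/3}\in[0,\tfrac\pi2]$, so that $\pi\tau=\theta^3/\pi^2$, I would bound the right-hand side by $\sin(\theta+\pi\tau)\le\sin\theta+\pi\tau\cos\theta$ (valid since $\theta\in[0,\tfrac\pi2]$), the left-hand side by $s\sin(\theta/s)\ge\theta-\tfrac{\theta^3}{6s^2}$, and use $\theta-\sin\theta\ge\tfrac{\theta^3}{6}-\tfrac{\theta^5}{120}$. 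Factoring out $\theta^3$ reduces the inequality to positivity of
\[ \frac16\Big(1-\frac1{s^2}\Big)-\frac{\theta^2}{120}-\frac{\cos\theta}{\pi^2}. \]
I expect this residual bracket to be the main obstacle. One checks that it is increasing in $\theta$ on $[0,\tfrac\pi2]$ (its derivative has the sign of $\tfrac{\sin\theta}{\pi^2}-\tfrac{\theta}{60}$, which is positive because $\tfrac{\sin\theta}{\theta}\ge\tfrac2\pi>\tfrac{\pi^2}{60}$), and that its value at $\theta=0$, namely $\tfrac16(1-s^{-2})-\pi^{-2}$, is already positive for the smallest admissible modulus $s=\tfrac{23}{8}$ (reached at $k=1$), and hence for all larger $s$. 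The difficulty is exactly to retain the factor $s$ in $s\sin(\theta/s)$ throughout — which the bare concavity inequality throws away — while arranging the Taylor remainders on both sides so that this quantity stays nonnegative uniformly in $\theta\in[0,\tfrac\pi2]$ and in $s\ge\tfrac{23}{8}$.
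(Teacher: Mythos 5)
Your proof is correct and follows essentially the same route as the paper: reduce each bound to the sign of $\tilde{g}$ at $\pi\mp a(s)$, dispose of the $\rho(\frac{s}{2})$-branch by concavity of the sine (the paper's $\sin\alpha<n\sin\frac{\alpha}{n}$), and settle the cube-root branch by third-order Taylor estimates that come down to the same numerical margin $\frac16(1-s^{-2})-\pi^{-2}>0$ at $s=\frac{23}{8}$, i.e.\ the paper's $15<\pi^2(2-\frac{2.5}{s^2})$. The only structural deviation is that you bypass the reflection argument of Lemma~\ref{lemma2} by checking directly that the sign conditions on $(2k,2k+1)$ and $(2k+1,2k+2)$ reduce to the identical scalar inequalities in $\mu=\rho(\frac{s}{2})$ and $\tau=1-2\rho(\frac{s}{2})$ — a valid and slightly more economical packaging of the same idea.
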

\begin{proof}
First assume that $s\in(2k, 2k+1)$, where $k \in \N$. In view of (\ref{corol1}) and (\ref{eq:p1_general}) it remains to prove that
\begin {align} \label{eq:g1_8}
\begin{cases}
\pi-\frac{\pi}{s}\rho\left(\frac{s}{2}\right)< x_1(s), \quad s\in(2k, 2k+\frac{7}{8}), \\
\pi-\frac{\pi}{s} t^{\frac13}< x_1(s), \quad s\in (2k+\frac{7}{8}, 2k+1),
\end{cases}
\end {align}
\begin{equation} \label{eq:g1_9}
\text{ where } t=1-2\rho\left(\frac{s}{2}\right)= 1-2\left(\frac{s}{2}-k\right)= 2k+1-s.
\end{equation}
It follows from Lemma \ref{lemma1} that $\tilde{g}(x,s)$ has a unique root in the interval $0<x< \pi$. Since $\tilde{g}(\pi,s)<0$ we see that to prove (\ref{eq:g1_8}) we must prove that 
\begin {align}
\label{eq:g1_10} \tilde{g}\left(\pi - \frac{\pi}{s} \rho \left(\frac{s}{2}\right), s\right)>0, && s\in\left(2k, 2k+\frac{7}{8}\right), \\
\tilde{g}\left(\pi- \frac{\pi t^\frac{1}{3}}{s},s\right)>0, && s\in \left[2k+\frac{7}{8}, 2k+1\right).\label{eq:g1_11}
\end {align}
We have
$$\tilde{g}\left(\pi - \frac{\pi}{s}\rho\left(\frac{s}{2}\right), s \right) = s \sin \left(\pi - \frac{\pi}{s}\rho \left(\frac{s}{2}\right)\right)
- \sin \left(\pi s - \pi \rho \left(\frac{s}{2}\right)\right).$$
Note that $s=2k+ 2 \rho \left(\frac{s}{2}\right)$ for $s\in(2k, 2k+1)$. Therefore, we have 
\begin{multline*}
\tilde{g}\left(\pi - \frac{\pi}{s}\rho\left(\frac{s}{2}\right), s \right) = s \sin \left(\frac{\pi}{s}\rho \left(\frac{s}{2}\right)\right)- \sin \left(2\pi k + \pi \rho \left(\frac{s}{2}\right)\right) = s \sin \left(\frac{\pi}{s} \rho \left(\frac{s}{2}\right) \right)- \sin \left(\pi \rho \left(\frac{s}{2}\right)\right)> 0.
\end{multline*}
(We already noticed that $\sin \alpha < n \sin \left(\frac{\alpha}{n}\right)$ for $\alpha\in(0, \pi)$, $n>1$.) Thus inequality (\ref{eq:g1_10}) is proved. Let us prove (\ref{eq:g1_11}). We have
$$\tilde{g} \left( \pi - \frac{\pi t ^{\frac{1}{3}}}{s}, s \right) = s \sin \left(\pi - \frac{\pi t ^{\frac{1}{3}}}{s}\right) - \sin \left( \pi s - \pi t^{\frac{1}{3}}\right).$$
Using (\ref{eq:g1_9}), we get
\begin {multline*}
\tilde{g} \left( \pi - \frac{\pi t ^{\frac{1}{3}}}{s}, s \right) = s \sin \left(\frac{\pi t^{\frac{1}{3}}}{s}\right)- \sin \left(\pi \left(2k+1 -t \right)- \pi t^{\frac{1}{3}}\right)=\\
= s \sin \left(\frac{\pi t^{\frac{1}{3}}}{s}\right)- \sin \left(2 \pi k + \pi - \pi t - \pi t^{\frac{1}{3}}\right)= s \sin \left(\frac{\pi t^{\frac{1}{3}}}{s}\right) - \sin\left(\pi t + \pi t^{\frac{1}{3}}\right).
\end {multline*}
Therefore, we must prove the following inequality: 
\begin {equation*}
\sin \left(\pi t + \pi t^{\frac{1}{3}}\right)< s \sin \left(\frac{\pi t^{\frac{1}{3}}}{s} \right) \text{ for } 0<t\leq\frac{1}{8}, \quad s\geq 3 - \frac{1}{8}.
\end {equation*}
Note that $\pi t + \pi  t^{\frac{1}{3}}\leq \pi \left(\frac{1}{8}+ \frac{1}{2}\right)= \frac{5 \pi }{8} < 2 $ and if $0< \alpha<2$, then $\sin \alpha < \alpha-\frac{\alpha^3}{6}+\frac{\alpha^5}{120}< \alpha - \frac{\alpha^3}{6}+\frac{4\alpha^3}{120}= \alpha- \frac{2\alpha^3}{15}$.
On the other hand, $\sin \alpha > \alpha-\frac{\alpha^3}{6}$ for any $\alpha > 0$. Hence, we need prove that   
\begin{multline*}
\pi t + \pi t^{\frac{1}{3}} - \frac{2}{15}\left(\pi t + \pi t^{\frac{1}{3}} \right)^3< s \left(\frac{\pi t^{\frac{1}{3}}}{s}- \frac{\pi^3 t }{6 s^2}\right) \Leftrightarrow \pi t + \frac{\pi^3 t }{6s^2}< \frac{2}{15} \left(\pi t + \pi t^{\frac{1}{3}}\right)^3.
\end{multline*}
The right-hand side of this inequality exceeds $\frac{2}{15}\pi^3 t$. Therefore, the required inequality will be proved if we verify that
$$1+\frac{\pi^2}{6s^2}< \frac{2}{15}\pi^2 \Leftrightarrow 15< \pi^2 \left(2- \frac{2.5}{s^2}\right).$$
Since $s>2.5$, we have 
$$\pi^2 \left(2- \frac{2.5}{s^2}\right)> \pi^2 \left(2-\frac{1}{2.5}\right)= \frac{\pi^2\cdot 8}{5}>15.$$

Now consider $s\in(2k+1, 2k+2)$, where $k \in \N$. It is sufficient to prove that $\tilde{g}(\pi+a(s), s)\leq 0$. According to the definition of $a(s)$ this follows from the two inequalities 
\begin{align}
\tilde{g}\left(\pi+\frac{\pi}{s}t^{\frac{1}{3}}, s \right)<0, && s\in \left(2k+1, 2k+\frac{9}{8}\right], && t=s-(2k+1),\label{eq:g1_12} \\
\tilde{g}\left(\pi+ \frac{\pi}{s}\rho \left(\frac{s}{2}\right), s \right)<0, && s\in \left(2k+\frac{9}{8}, 2k+2 \right). \label{eq:g1_13}
\end{align}
(The inequality $\tilde{g}\left(\pi + \arcsin \left(\frac{1}{s}\right), s\right)\leq 0$ was proved above.)

In fact, inequalities (\ref{eq:g1_12}), (\ref{eq:g1_13}) are in some sense ''symmetric reflections'' of inequalities (\ref{eq:g1_10}), (\ref{eq:g1_11}) and follows from them. To verify this we prove the following implication (with account of the following lemma, the proof of Proposition \ref{propos:g1_2} is complete).
\end{proof}
\begin{lemma}\label{lemma2}
Let $\delta \in \left(0, \frac{\pi}{2}\right]$, $s_-$, $s_+ \in \R $, $1<s_-<s_+$, $s_- + s_+= 2 \nu$, where $\nu \in \N$. If $\tilde{g}\left(\pi-\frac{\delta}{s_-}, s_-\right)>0$, then $\tilde{g}\left(\pi+\frac{\delta}{s_+},s_+\right)<0.$
\end {lemma}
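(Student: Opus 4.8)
The plan is to evaluate both sides of the implication in closed form and reduce the claim to a single monotonicity statement about the auxiliary function $h(s)=s\sin(\delta/s)$. First I would expand both quantities using $\tilde{g}(x,s)=s\sin x-\sin(sx)$ together with the elementary identities $\sin(\pi-a)=\sin a$ and $\sin(\pi+a)=-\sin a$. This gives
\[
\tilde{g}\!\left(\pi-\tfrac{\delta}{s_-},s_-\right)=s_-\sin\!\tfrac{\delta}{s_-}-\sin(s_-\pi-\delta),\qquad
\tilde{g}\!\left(\pi+\tfrac{\delta}{s_+},s_+\right)=-s_+\sin\!\tfrac{\delta}{s_+}-\sin(s_+\pi+\delta).
\]

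The crucial point is that the hypothesis $s_-+s_+=2\nu$ with $\nu\in\N$ links the two transcendental terms. Writing $s_+\pi=2\nu\pi-s_-\pi$ and discarding the multiple of $2\pi$, I obtain $\sin(s_+\pi+\delta)=\sin(\delta-s_-\pi)=-\sin(s_-\pi-\delta)$, so that
\[
\tilde{g}\!\left(\pi+\tfrac{\delta}{s_+},s_+\right)=\sin(s_-\pi-\delta)-s_+\sin\!\tfrac{\delta}{s_+}.
\]
Thus both expressions are governed by the single quantity $\sin(s_-\pi-\delta)$: the hypothesis $\tilde{g}(\pi-\delta/s_-,s_-)>0$ reads $\sin(s_-\pi-\delta)<s_-\sin(\delta/s_-)$, while the desired conclusion $\tilde{g}(\pi+\delta/s_+,s_+)<0$ reads $\sin(s_-\pi-\delta)<s_+\sin(\delta/s_+)$. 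Hence it suffices to prove the chain $s_-\sin(\delta/s_-)\le s_+\sin(\delta/s_+)$, i.e. that $h(s)=s\sin(\delta/s)$ is nondecreasing in $s$ on the relevant range.

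To establish monotonicity I would differentiate: $h'(s)=\sin(\delta/s)-(\delta/s)\cos(\delta/s)=\varphi(\delta/s)$, where $\varphi(u)=\sin u-u\cos u$. Since $\delta\leq\pi/2$ and $s>1$, the argument $u=\delta/s$ lies in $(0,\pi/2)$, and on this range $\varphi(0)=0$ with $\varphi'(u)=u\sin u>0$, so $\varphi>0$ and $h$ is strictly increasing. Combining $s_-<s_+$ with this monotonicity yields $s_-\sin(\delta/s_-)<s_+\sin(\delta/s_+)$, and substituting into the two reformulations above closes the argument. There is no serious obstacle here; the only points requiring care are the bookkeeping in the trigonometric reduction — the sign flip $\sin(s_+\pi+\delta)=-\sin(s_-\pi-\delta)$ is precisely where the parity assumption $s_-+s_+\in 2\N$ enters — and verifying that the hypotheses $\delta\leq\pi/2$, $s_->1$ keep $\delta/s$ inside the interval where $\varphi$ is positive.
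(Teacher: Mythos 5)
Your proof is correct and is essentially the paper's argument in a slightly different packaging: the paper shows the sum $\tilde{g}\left(\pi-\frac{\delta}{s_-},s_-\right)+\tilde{g}\left(\pi+\frac{\delta}{s_+},s_+\right)$ is negative by using exactly your two ingredients — the cancellation $\sin(\pi s_- -\delta)+\sin(\pi s_+ +\delta)=0$ forced by $s_-+s_+=2\nu$, and the monotonic increase of $s\sin(\delta/s)$ for $\delta\in\left(0,\frac{\pi}{2}\right]$. Your explicit derivative computation for $h(s)=s\sin(\delta/s)$ supplies a detail the paper only asserts, but the route is the same.
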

It is easy to show that inequalities (\ref{eq:g1_10}), (\ref{eq:g1_13}) and (\ref{eq:g1_11}), (\ref{eq:g1_12}) form exactly such pairs, as in Lemma \ref{lemma2}. According to this we have that (\ref{eq:g1_10}) implies (\ref{eq:g1_13}), and (\ref{eq:g1_11}) implies (\ref{eq:g1_12}).
\begin {proof}[Proof of Lemma \ref{lemma2}] It is sufficient to prove that the sum $\tilde{g}\left(\pi-\frac{\delta}{s_-}, s_-\right)+\tilde{g}\left(\pi+\frac{\delta}{s_+},s_+\right)$ is negative. We have  
\begin{align}
s_- \sin \left(\pi - \frac{\delta}{s_-}\right)- \sin \left(\pi s_- -\delta\right)+s_+ \sin \left(\pi+ \frac{\delta}{s_+}\right)- \sin\left(\pi s_+ + \delta\right)= \nonumber \\
= \left(s_- \sin \left(\frac{\delta}{s_-}\right)-s_+ \sin \left(\frac{\delta}{s_+}\right)\right)-\left(\sin \left(\pi s_- - \delta \right)+ \sin \left(\pi s_+ + \delta \right)\right). \label{eq:g1_14}
\end{align}
If $\delta\in\left(0,\frac{\pi}{2}\right]$, then the function $s \sin \left(\frac{\delta}{s}\right)$ increases for $1\leq s < +\infty$ and $\left(s_- \sin \left(\frac{\delta}{s_-}\right)-s_+ \sin \left(\frac{\delta}{s_+}\right)\right)<0$. Since $\sin\left(\frac{\pi\left(s_-+s_+\right)}{2}\right)= \sin \left(\pi \nu \right)$, $\nu \in \N$, we have $\left(\sin \left(\pi s_- - \delta \right)+ \sin \left(\pi s_+ + \delta \right)\right)=0$. This completes the proof of Lemma \ref{lemma2}. 
\end {proof}
Figure \ref{fig:s1bounds} shows the plot of $x_1(m)$ and obtained two-sided estimates (dashed lines).
\begin{figure}
\includegraphics[width=\textwidth]{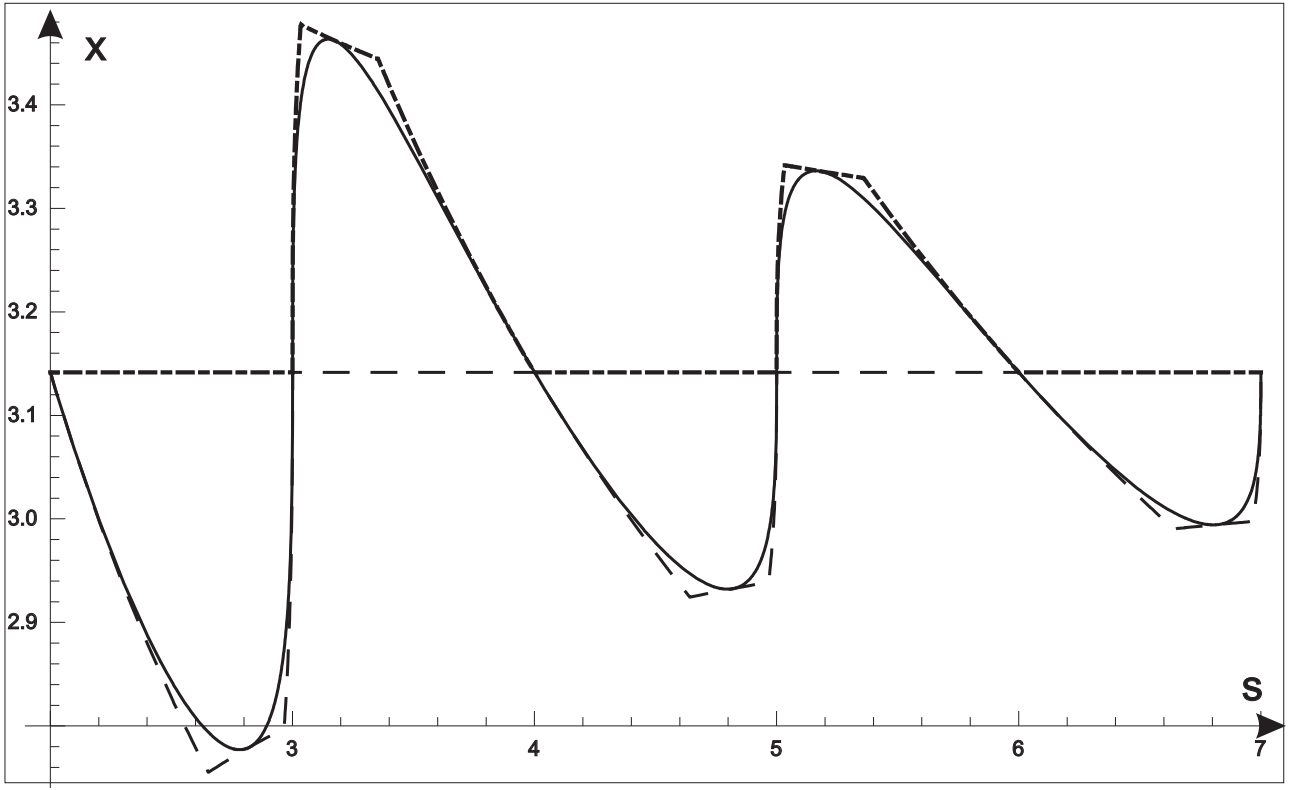}
\caption{Plot of $x_1(s)$ with two-sided estimates (dashed lines)}
\label{fig:s1bounds}
\end{figure}
Since the function $\frac{\sin t}{t}$ decreases in the interval $0<t<\pi$, we have $\arcsin\left(\frac{1}{s}\right)< \frac{\pi}{3 s}$ for $s>2$. Therefore
\begin{eqnarray*}
&& \frac{\pi\rho}{s}\geq \arcsin\left(\frac{1}{s}\right), \text{ if } \rho \geq \frac13,\\
&& \frac{\pi}{s} \sqrt[3]{1-2 \rho} \geq \arcsin\left(\frac{1}{s}\right), \text{ if } \sqrt[3]{1-2 \rho} \geq \frac13 \Leftrightarrow \rho \leq \frac{13}{27}.
\end{eqnarray*}
So if $\rho \in [\frac13, \frac{13}{27}]$, then $\arcsin\left(\frac{1}{s}\right)$ is the best estimate. Therefore in Theorem \ref{th:p1} the minimum is a well-posed operation. Returning to the original variables $p = x \frac{s-1}{2}$, $m = \frac{s-1}{s+1}$ we obtain the statement of Theorem \ref{th:p1}.
\subsection{Study of $p_1(m)$ for $m>1$}\label{subsec:g1_sym}
We use the change of variables  $\bar{p} = \frac{p}{m}$, $\bar{m} = \frac{1}{m} \in (0,1)$ to study $p_1(m)$ in the case $m > 1$. We have 
$g_1(\bar{p},\bar{m}) = -\frac{1}{m} g_1(p,m)$. Therefore, we have $g_1(\bar{p},\bar{m}) =0$ iff $g_1(p,m)=0$. 
In such a way we get the following functional equation: 
\begin{equation}
\label{eq:g1_p1sym}
p_1(m) = m p_1(\frac{1}{m}).
\end{equation}
Hence, the properties a)---f) (see Theorem \ref{th:p1}) of $p_1(m)$ for $m > 1$ follow from the similar properties of $p_1(m)$ for $m\in(0,1)$.
\subsection{Differentiability of $p_1(m)$}
In this subsection we prove that the function $p_1(m)$ is a continuous and decreasing function for all $m>1$ and $p_1(m)$ is continuously differentiable at all points of the interval $m \in (1,+\infty)$ except for the set $\{m = \frac{k+1}{k}| k \in \N\}$ where its derivative is equal to $-\infty$. From functional equation (\ref{eq:g1_p1sym}) it immediately follows that the function $p_1(m)$ is a continuous and increasing function for all $m\in(0,1)$ and $p_1(m)$ is continuously differentiable at all points of the interval $m \in (0,1)$ except for the set $\{m = \frac{k}{k+1}| k \in \N\}$ where its derivative is equal to $+\infty$.  

Let $m>1$. From (\ref{eq:x1pi}) and (\ref{eq:g1_p1sym}) it follows that 
\[p_1(\frac{k+1}{k}) = \pi (k+1) \text{ for any } k \in \N.\]
Consider the partial derivatives
\begin{eqnarray}
&& \pdd{g_1(p,m)}{p} = \frac{m^2-1}{m} \sin{p} \sin \frac{p}{m},\label{dg1dp}\\
&& \pdd{g_1(p,m)}{m} =  \frac{m p \cos (p) \cos \left(\frac{p}{m}\right)+\left(p \sin (p)-m^2 \cos (p)\right) \sin \left(\frac{p}{m}\right)}{m^2}.\label{dg1dm}
\end{eqnarray}
We have $\pdd{g_1}{p}(\pi (k+1), \frac{k+1}{k}) = 0$ and $\pdd{g_1}{m}(\pi (k+1), \frac{k+1}{k}) = -\pi k<0$. Hence the plot of $p_1(m)$ has a vertical tangent line. We claim that $p_1(m)$ is smooth for $m > 1$, $m \notin \{\frac{k+1}{k}| k \in \N\}$. Indeed, this follows from the implicit function theorem and the inequality $\pdd{g_1}{p}|_{p=p_1(m)} \neq 0$. Let us prove the last inequality. The equation $\pdd{g_1}{p} = 0$ has two series of positive roots, namely $p^1(k_1) = \pi k_1$ and $p^2(k_2) = \pi k_2 m$ where $k_1,k_2 \in \N$. At these points we have 
\[g_1(p^1(k_1),m) = (-1)^{k_1+1} m \sin(\frac {k_1 \pi}{m}), \quad g_1(p^2(k_2),m) = (-1)^{k_2} m \sin(k_2 m \pi). \]
It can easily be checked that $g_1(p^i(k_i),m) \neq 0$ for $m \in (\frac{k+2}{k+1},\frac{k+1}{k})=:I(k)$, $\forall k \in \N$. This implies that $\pdd{g_1}{p}|_{p=p_1(m)} \neq 0$. From the implicit function theorem it follows that $p_1(m)$ is a continuously differentiable function on $I(k)$ and 
\begin{eqnarray*}
p_1'(m) = -\pdd{g_1(p,m)}{m}/\pdd{g_1(p,m)}{p}|_{p = p_1(m)} = -
\frac{p + m \cot p(-m + p \cot\frac{p}{m})}{m(m^2 -1)}|_{p = 
p_1(m)}.
\end{eqnarray*} 
Now we claim that $p_1(m)$ monotonically decreases for $m \in (1,2)$, $m \notin \{\frac{k+1}{k}| k \in \N\}$. Indeed, this follows from the inequality $p_1'(m)<0$. Let us prove this inequality. Since $g_1(p,m)|_{p = p_1(m)} = 0$, we get $m \cot p |_{p = 
p_1(m)} = \cot\frac{p}{m}|_{p = p_1(m)}$. Then $\sign(p_1'(m)) = -
\sign(f(p_1(m), m))$ where $f(p,m) = p - m \cot\frac{p}{m} + p \cot^2 \frac{p}{m}$. Consider the function $f$ as a polynomial of second degree with respect to $\cot\frac{p}{m}$. Since $m<2$ and $p_1(m)>\rho>1$ (see Theorem \ref{th:p1} e)), we see that the discriminant of the polynomial is negative, indeed $\mathrm{D}(f) = m^2 - 4 p^2 <0$. Thus $f(p_1(m), m)>0$ and $p_1'(m)<0$.       

Now we claim that the function $p_1(m)$ is continuous at the points $m = \frac{k+1}{k}$ for any $k \in \N$. Indeed, this means  existence of limits of $p_1(m)$ at the considered points and $\lim\limits_{m\to \frac{k+1}{k} \pm 0} p_1(m) = p_1(\frac{k+1}{k}) = \pi (k+1)$. Since $p_1(m)$ is monotonic and bounded for $m\neq \frac{k+1}{k}$ it follows that there exists finite limits $p_{\pm}(k) = \lim\limits_{m\to \frac{k+1}{k} \pm 0} p_1(m)$. Inequalities $p_{+}(k)<\pi(k+1)$, $p_{-}(k)<\pi(k+1)$ contradict to $p_1(\frac{k+1}{k}) = \pi (k+1)$ because if they are satisfied, then there exists a positive root $\tilde{p} < p_1(m)$ of the function $g_1(p,m)$, but by definition $p_1(m)$ is the minimal positive root. In such a way we proved that $p_{+}(k)\geq\pi(k+1)$, $p_{-}(k)\geq\pi(k+1)$. Further, the inequalities $p_{+}(k)>\pi(k+1)$, $p_{-}(k)>\pi(k+1)$ contradict to continuity of the curve $\{(p,m)|g_1(p,m)=0\}$ in a neighborhood of $(\pi (k+1), \frac{k+1}{k})$, but from the implicit function theorem this curve is continuous because    
$\pdd{g_1}{m}(\pi (k+1),\frac{k+1}{k}) = -m \frac{k+1}{k} \neq 0$.
Thus we proved that $p_{+}(k)=\pi(k+1)$, $p_{-}(k)=\pi(k+1)$. So $p_1(m)$ is a continuous function for any $m \in (1,2]$. 

Now we show that $p_1'(\frac{k+1}{k}) = -\infty$ for any $k \in \N$. For any $\mast = \frac{k+1}{k}$ we know the explicit value $\past = p_1(\mast) = \pi (k+1)$. We have $\pdd{g_1(\past,\mast)}{m}= -\pi k$ and $\pdd{g_1(\past,\mast)}{p}= 0$. This implies existence of the limit $\lim\limits_{m \to \mast}p_1'(m) = -\lim\limits_{{m \to \mast}}\pdd{g_1}{m}/\pdd{g_1,m)}{p}(p_1(m),m) = -\infty$. Therefore there exists $p_1'(\mast)=-\infty$.  

Thus we proved that $p_1(m)$ is continuous for any $m \in (1,2]$ and $p_1 \in C^1(\Omega)$ where $\Omega := \{m \in (1,2)|m \neq \frac{k+1}{k}, k \in \N\}$. Now let $m>2$. We have $\pdd{g_1}{p}|_{p = p_1(m)} \neq 0$. We claim that $p_1'(m)<0$. Indeed, this means that $f(p,m)>0$ for $p = p_1(m)$. In fact, if $\mathrm{D}(f)<0$, then $f(p,m)>0$; if $\mathrm{D}(f) \leq 0$, then $m\geq 2 p$ and we get $\cot\frac{p}{m} > q_2$ where $q_2$ is the greatest root of the function $f$ as a polynomial of second degree with respect to $\cot\frac{p}{m}$. Thus, we proved that $f(p_1(m),m)>0$. It follows that $p_1'(m)<0$. So $p_1(m)$ is a continuously differentiable decreasing function for $m \in (2, +\infty)$      

Finally we show that the function $p \mapsto g_1(p,m)$ changes sign at the point $p = p_1(m)$. If $m \notin \{\frac{k+1}{k}| k \in \N\}$ this follows from the inequality $\pdd{g_1}{p}|_{p = p_1(m)} \neq 0$. If $m = \mast = \frac{k+1}{k}$ we have $\pdd{g_1}{p}|_{p = p_1(m)} = 0$, $\pnder{g_1}{p}{2}|_{p = p_1(m)} = 0$, $\pnder{g_1}{p}{3}|_{p = p_1(m)} = -\frac{2(1+2 k)}{(k+1)^2}<0$. It follows that $g_1(p,\mast)$ changes sign at the point $\past = p_1(\mast)$.     
\section{Study of the function  $p_2(m)$}
\label{sec:g2}
In this section we prove a two-sided estimate of a minimal positive root of the equation $g_2(p, m)=0$, where
\begin{equation}\label{eq:g2dubl}
g_2(p,m)=m p \cos \left(\frac{p}{m}\right)\sin p - (p\cos p + (m^2 -1)\sin p)\sin \left(\frac{p}{m}\right).
\end{equation}
Our aim is to find or estimate as accurately as possible for any $m\in(0,1)\cup(1,+\infty)$ the value of $p_2(m)$ defined in (\ref{eq:p2}). The main results on this problem are summarized in Theorem \ref{th:p2}.
\begin{theorem}\label{th:p2}
For any $m \in (0,1) \cup (1,+\infty)$ the function $g_2(p,m)$ has a
minimal positive root $p_2(m)$ satisfying the following properties:
\begin{enumerate}
\item[a)] The function $p_2(m)$ is continuous for $m\in(0,1)$, coincides with $\frac{\pi m}{1-m}$ at the points $\{m = \frac{k}{k+1}|k \in \N\}$ and at points $m = \mast_{1,k}$, where $\frac{1+2k}{3+2k}-\frac{2}{15+40k+32k^2+8 k^3}<\mast_{1,k} <\frac{1+2k}{3+2k}$; is continuous for $m\in(1,+\infty)$, coincides with $\frac{\pi m}{m-1}$ at the points $\{m = \frac{k+1}{k}|k \in \N\}$  and at points $m = \mast_{2,k}$, where $\frac{3+ 2k}{1+2k}<\mast_{2,k}<\frac{3+2k}{1+2k}+\frac{2}{1+8k(1+k^2)}$.
\item[b)] The function $p_2(m)$ is continuously differentiable at all points of the set $(1,\frac12) \cup (2, +\infty)$ and the following sets:
\begin{eqnarray*}
m \in \left[\frac{k}{1+k},\frac{1+2k}{3+2k}-\frac{2}{15+40k+32k^2+8 k^3} \right]\cup \left[\frac{1+2k}{3+2k},\frac{1+k}{2+k}\right] \cup \\
\cup \left[\frac{k+2}{k+1},\frac{3+ 2k}{1+ 2k}\right]\cup
 \left[\frac{3+2k}{1+2k}+\frac{2}{1+8k(1+k^2)},\frac{k+1}{k}\right]
\end{eqnarray*}
for all $k \in \N$. Out of these sets there exist values  $m=\bar{m}_{1,k}$, $\frac{1+2k}{3+2k}-\frac{2}{15+40k+32k^2+8 k^3}<\mast_{1,k}<\bar{m}_{1,k}<\frac{1+2k}{3+2k}$, and $m=\bar{m}_{2,k}$, $\frac{3+ 2k}{1+2k}<\bar{m}_{2,k}<\mast_{2,k}<\frac{3+2k}{1+2k}+\frac{2}{1+8k(1+k^2)}$, where the derivative $p_2'(m)$ is equal to $+\infty$ and $-\infty$ respectively.
\item[c)] For $m \in (0,1)$ the function $p_2(m)$ has the following lower and upper bounds:
\begin{equation}\label{p2hypm01}
\begin{cases}
5.7 m < p_2(m) < 2 \pi m \text{ for } m<\frac12,\\ 
\frac{\pi m}{1-m} - \frac{m}{1-m}\arcsin\left(\frac{1-m}{1+m}\right) <p_2(m)<\frac{\pi m}{1-m} + \frac{m}{3 m -1}
\text{ for } m \geq \frac12. 
\end{cases}
\end{equation}
For $m >1$ the function $p_2(m)$ has the following lower and upper bounds:
\begin{equation}\label{p2hypmgr1}
\begin{cases}
\frac{\pi m}{m-1} - \frac{m}{m-1}\arcsin\left(\frac{m-1}{m+1}\right) <p_2(m)<\frac{\pi m}{m-1} + \frac{m}{3-m}\text{ for } m\leq 2, \\ 
5.7< p_2(m) < 2 \pi \text{ for } m > 2.
\end{cases}
\end{equation}
\item[d)] For $m \in (\frac12,1)$ the plot of $p = p_2(m)$ ''wraps'' the hyperbole $p = \frac{\pi m}{1-m}$ as follows:
\begin{equation}\label{p2hypm01prec}
\begin{cases}
p_2(m)<\frac{\pi m}{1-m} \text{ for } m \in \left(\frac{k}{1+k},\frac{1+2k}{3+2k}-\frac{2}{15+40k+32k^2+8 k^3} \right],\\
p_2(m)>\frac{\pi m}{1-m} \text{ for } m \in \left[\frac{1+2k}{3+2k},\frac{1+k}{2+k} \right),
\quad k \in \N.
\end{cases}
\end{equation}
For $m \in (1,2)$ the plot of $p = p_1(m)$ ''wraps'' the hyperbole $p = \frac{\pi m}{m-1}$ as follows:
\begin{equation}\label{p2hypmgr1prec}
\begin{cases}
p_2(m)>\frac{\pi m}{m-1} \text{ for } m \in \left(\frac{k+2}{k+1},\frac{3+ 2k}{1+ 2k} \right],\\ 
p_2(m)<\frac{\pi m}{m-1} \text{ for } m \in \left[\frac{3+2k}{1+2k}+\frac{2}{1+8k(1+k^2)},\frac{k+1}{k} \right),
\quad \forall k \in \N.
\end{cases}
\end{equation}
\item[e)] At the point $p = p_2(m)$ the function $p \mapsto g_2(p,m)$ changes its sign.
\end{enumerate} 
\end{theorem}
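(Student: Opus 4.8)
The plan is to follow the architecture of Section~\ref{sec:g1}: exploit a symmetry of $g_2$ to reduce to $m\in(0,1)$, pass to a simpler two-variable function, and then trap the minimal root in a narrow window about the hyperbola $p=\frac{\pi m}{1-m}$. First I would record the identity produced by the substitution $\bar p=p/m$, $\bar m=1/m$, namely $g_2(\bar p,\bar m)=-\frac1{m^2}\,g_2(p,m)$, whence the functional equation $p_2(m)=m\,p_2(1/m)$; as in Subsection~\ref{subsec:g1_sym} this transports every assertion for $m>1$ to one for $m\in(0,1)$, so I would work on $(0,1)$ throughout. Next, expanding the products of trigonometric factors into sums and setting $s=\frac{1+m}{1-m}>1$ and $x=p\,\frac{1-m}{m}$ (so that $p=\frac{(s-1)x}{2}$ and the hyperbola becomes $x=\pi$), one obtains
\[
\tilde g_2(x,s):=s\bigl(\cos x-\cos(sx)\bigr)-\frac{(s^2-1)x}{4}\bigl(s\sin x+\sin(sx)\bigr)=\frac{(s+1)^2}{2}\,g_2(p,m),
\]
so that $g_2=0$ is equivalent to $\tilde g_2(x,s)=0$, the analogue of $\tilde g$. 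Equivalently, dividing $g_2$ by $\sin p\,\sin\frac pm$ turns the equation into the comparison $m^2\bigl(\frac pm\cot\frac pm-1\bigr)=p\cot p-1$. I would first check, exactly as for $g_1$, that the zeros of $\sin p$ and $\sin\frac pm$ are not roots of $g_2$, so nothing is lost in passing to $\tilde g_2$.

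For $0<m<\frac12$ I would prove the coarse bounds $5.7\,m<p_2(m)<2\pi m$ of \eqref{p2hypm01} by rescaling $p=m\xi$: as $m\to0$ the leading part of $g_2(m\xi,m)$ is, up to the factor $m^3$, $\Xi(\xi):=\xi^2\cos\xi-\bigl(\xi-\frac{\xi^3}{3}\bigr)\sin\xi$, which is negative on $(0,5.7]$ and first changes sign in $(5.7,2\pi)$. The rigorous window then follows from the uniform signs $g_2(5.7\,m,m)<0<g_2(2\pi m,m)$ for $m<\frac12$ (the upper value degenerating to $0$ as $m\to\frac12$, matching $p_2(\frac12)=\pi=2\pi\cdot\frac12$), together with the single negative sign of $g_2(\cdot,m)$ on $(0,5.7\,m)$, which excludes a smaller root. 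For $\frac12\le m<1$, i.e.\ $s\ge3$, the task is to locate $x_2(s)$ near $\pi$. The lower bound $x_2(s)\ge\pi-\arcsin\frac1s$ (the left inequality of \eqref{p2hypm01}) I would get by showing $\tilde g_2$ is single-signed on $\bigl(0,\pi-\arcsin\frac1s\bigr)$: there the term $-\frac{(s^2-1)x}{4}(s\sin x+\sin(sx))$ dominates once $s\sin x>1$, forcing $\tilde g_2<0$. The upper bound $x_2(s)<\pi+\frac1{s-2}$ (equivalently $p_2<\frac{\pi m}{1-m}+\frac{m}{3m-1}$) I would get by bracketing: evaluating $\tilde g_2$ at $\pi$ and at $\pi+\frac1{s-2}$ and exhibiting a sign change produces a root $\le\pi+\frac1{s-2}$, which by the lower bound is the minimal one.

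The wrapping statement (part d)) and the special points both follow from the sign of $g_2$ on the hyperbola. A short computation gives that $g_2$ at $p=\frac{\pi m}{1-m}$ equals $-\cos\frac{s\pi}{2}\bigl(\pi m\sin\frac{s\pi}{2}+s(1-m)^2\cos\frac{s\pi}{2}\bigr)$. This vanishes either when $s$ is an odd integer --- giving $p_2=\frac{\pi m}{1-m}$ at $m=\frac{k}{k+1}$ --- or when $\tan\frac{s\pi}{2}=-\frac{s(1-m)^2}{\pi m}$; the root of this transcendental equation in $(2k+1,2k+2)$, lying just below $s=2k+2$, is the extra crossing point $\mast_{1,k}$, and a cubic Taylor expansion of $\tan$ about $(k+1)\pi$ yields the two-sided bounds on $\mast_{1,k}$ in part a). Tracking the sign of the displayed factor across $(2k+1,2k+2)$ and $(2k+2,2k+3)$ gives \eqref{p2hypm01prec}. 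Finally, continuity (part a)), differentiability (part b)) and the sign change (part e)) I would obtain from the implicit function theorem applied to $g_2=0$: near $p_2(m)$ the level set is a smooth graph wherever $\partial g_2/\partial p\ne0$ at the root; the vertical-tangent points $\bar m_{i,k}$ are the isolated $m$, off the rational grid and off the crossing points, at which $g_2=0$ and $\partial g_2/\partial p=0$ hold simultaneously; and the sign change of $p\mapsto g_2(p,m)$ follows from $\partial g_2/\partial p\ne0$ generically and from a higher-order expansion at those exceptional $m$, just as in the closing arguments for $p_1(m)$.

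I expect the main obstacle to be the core window estimate on $\frac12\le m<1$ together with the wrapping analysis. Unlike $\tilde g(x,s)=s\sin x-\sin(sx)$, which is purely trigonometric and enjoys the reflection symmetry exploited in Lemma~\ref{lemma2}, the function $\tilde g_2$ carries an extra factor linear in $x$. This destroys that symmetry and pits a bounded oscillatory term against a term growing like $x$, so counting and locating the critical points of $\tilde g_2$ on $(0,\pi]$, establishing the one-sided signs at the candidate bounds, and above all isolating the transcendental points $\mast_{i,k}$ and the vertical-tangent points $\bar m_{i,k}$ off the rational grid require markedly more delicate sign bookkeeping than the corresponding steps for $p_1(m)$.
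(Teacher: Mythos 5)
Your overall architecture matches the paper's: the symmetry $g_2(\bar p,\bar m)=-m^{-2}g_2(p,m)$ giving $p_2(m)=m\,p_2(1/m)$, the passage to $\tilde g_2(x,s)$ with $s=\frac{1+m}{1-m}$, $x=p\,\frac{1-m}{m}$ (your normalization is the paper's divided by $4$), the correct formula for $g_2$ on the hyperbola $x=\pi$, whose factor $\cos\frac{\pi s}{2}$ produces the points $m=\frac{k}{k+1}$ and whose transcendental factor produces the points $\mast_{1,k}$, and the sign of $\tilde g_2(\pi,s)$ driving the wrapping (\ref{p2hypm01prec}). But the proposal is missing the one device that makes the whole argument close, and the step you offer in its place does not hold. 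The paper writes $\tilde g_2=4s(\cos x-\cos(sx))+xH(x,s)$ with $H=-(s^2-1)h$, $h=s\sin x+\sin(sx)$, and observes that $J=\tilde g_2/H$ satisfies $\partial J/\partial x=\tilde g^{\,2}/h^2\ge 0$, where $\tilde g=s\sin x-\sin(sx)$ is exactly the function governing $p_1$. This monotonicity of $J$ between consecutive roots of $h$ yields simultaneously: the absence of roots of $\tilde g_2$ on $(0,\tilde x_1(s))\supset\left(0,\pi-\arcsin\frac1s\right)$ (the lower bound in (\ref{p2hypm01})), uniqueness of the root in the window (needed to identify the bracketed root with $p_2$ and to run the implicit function theorem), and --- via $x_2'(s)=4w(x,s)(1-s^2)^{-2}/\tilde g^{\,2}(x,s)\big|_{x=x_2(s)}$ --- the location of the vertical tangents $\bar m_{i,k}$ exactly at the crossings $x_2(s)=x_1(s)$. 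Your substitute, that on $\{s\sin x>1\}$ the term $-\frac{(s^2-1)x}{4}(s\sin x+\sin(sx))$ ``dominates'', fails with no margin to spare: at $x=\arcsin\frac1s$ with $s$ large the two terms of $\tilde g_2$ are approximately $4s(1-\cos 1)\approx 1.839\,s$ and $-s(1+\sin 1)\approx -1.841\,s$, an almost exact cancellation that no crude comparison will resolve; and you say nothing about $\left(0,\arcsin\frac1s\right]$, where $s\sin x\le 1$.

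Two smaller gaps. For $m<\frac12$ your rescaling $p=m\xi$ only identifies the limiting root $\rho_2=5.7634\dots$ as $m\to 0$; the uniform sign claims for all $m<\frac12$ are asserted, not proved. The paper instead divides $g_2=0$ by $\sin p\,\sin\frac pm$ and uses monotonicity of $G(x)=\frac{1-x\cot x}{x^2}$ on $(0,\pi)$ and $(\pi,2\pi)$ together with $\lim_{x\to0}G(x)=\frac13$, which settles existence, the bounds $\rho_2 m<p_2(m)<2\pi m$, and monotone differentiability of $p_2$ on $\left(0,\frac12\right)$ in one stroke. And your bracketing of the upper bound between $x=\pi$ and $x=\pi+\frac1{s-2}$ cannot be uniform in $s$, because $\tilde g_2(\pi,s)$ changes sign as $s$ crosses the integers --- that sign change is precisely the content of the wrapping (\ref{p2hypm01prec}); the paper instead proves $\tilde g_2\left(\pi+\frac1{s-2},s\right)>0$ directly (Proposition \ref{propos:g2_2}) and combines it with negativity of $\tilde g_2$ near $x=0$ and the uniqueness supplied by the monotone function $J$.
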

Figure \ref{fig:p2bounds} shows the plot of $p_2(m)$ with two-sided estimates (dashed lines).
\begin{figure}
\includegraphics[width=\textwidth]{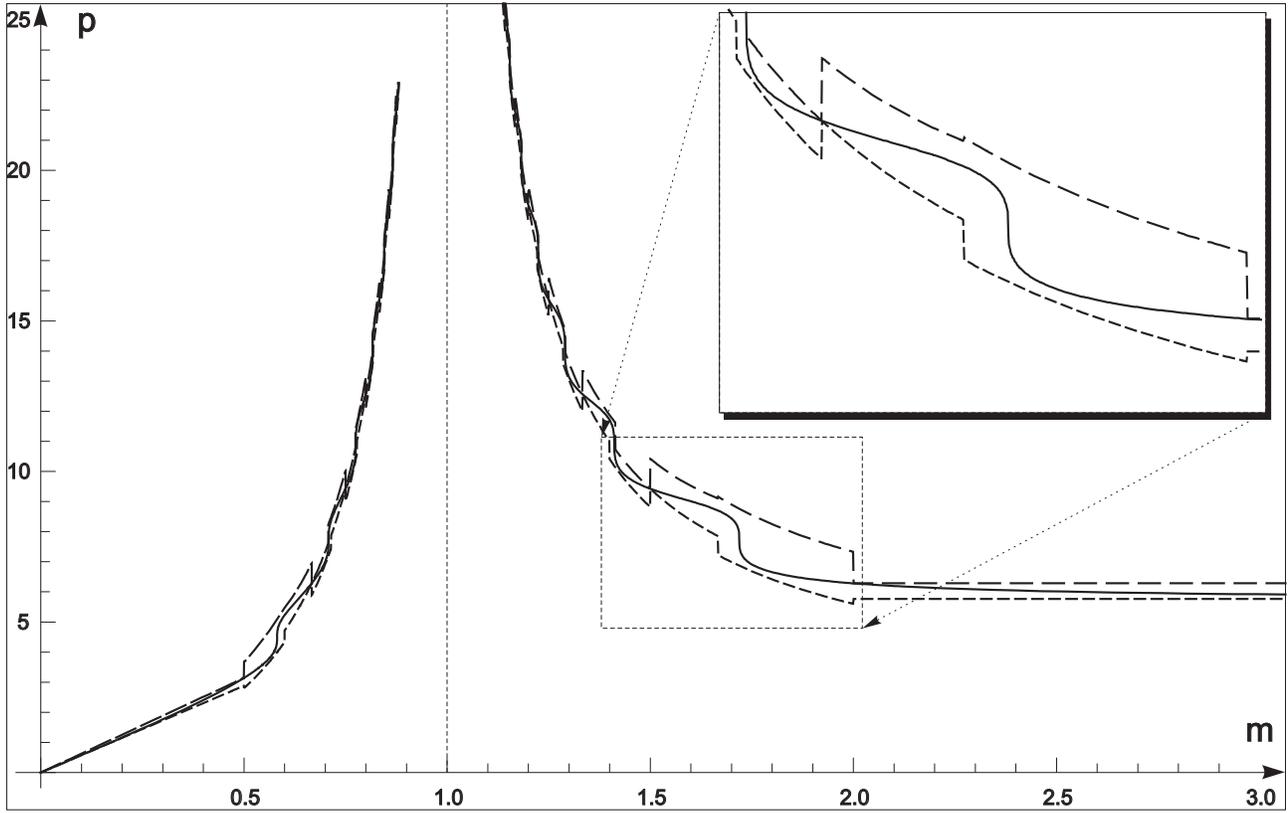}
\caption{Plot of the function $p_2(m)$ with two-sided estimates}
\label{fig:p2bounds}
\end{figure}
\begin{remark}
From (\ref{p2hypm01}) and (\ref{p2hypmgr1}) it follows that $p_2(m)$ tends to $+\infty$ as $m\to 1 \pm 0$.
\end{remark}
\subsection{Two-sided estimate of $p_2(m)$ for $0<m<\frac12$}\label{subsec:p1m012}
Let us prove that for $m \in (0, \frac12)$ the equation $g_2(p, m)=0$ (see (\ref{eq:g2dubl})) has a root in the interval $0<p< 2 \pi m$.
First write the equation $g_2(p,m)=0$ in the equivalent form
\begin{equation}
\label{eq:g2sym}
m p \cos \left(\frac{p}{m}\right)\sin p = (p\cos p + (m^2 -1)\sin p)\sin \left(\frac{p}{m}\right).
\end{equation}
Since $0<m<\frac12$, we see that in the interval $0<p< 2 \pi m$ the product $\sin p \sin \left(\frac{p}{m}\right)$ vanishes only at $p=\pi m$ but it can easily be checked that this point is not a root of equation (\ref{eq:g2sym}). Therefore, dividing both sides of (\ref{eq:g2sym}) by the product $\sin p \sin(\frac{p}{m})$, we get the equivalent equation
\begin{align*}
m p \cot(\frac{p}{m}) = p \cot p + m^2-1 \iff m p \cot(\frac{p}{m})-m^2	= p \cot p -1 \iff \\
\iff \ \frac{m p \cot(\frac{p}{m})-m^2}{p^2}	= \frac{p \cot p -1}{p^2} \ \iff \ \frac{\frac{p}{m} \cot(\frac{p}{m})-1}{(\frac{p}{m})^2}	= \frac{p \cot p -1}{p^2}.
\end{align*}
Let $G(x) = \frac{1-x\cot x}{x^2}$. We have
$$g_2(p,m)=0 \iff G(\frac{p}{m}) = G(p) \text{ for } 0<p<2 \pi m.$$ 
Put $x = \frac{p}{m}$. Rewrite the last equation as  
\begin{equation}
\label{eq:G}
G(x) = G(m x), \qquad 0<x<2 \pi.
\end{equation}
Since the function $G(x)$ decreases over the intervals $0<x<\pi$ and $\pi<x<2 \pi$, we see that equation (\ref{eq:G}) has no roots in the interval $0<x<\pi$. Moreover, since $\lim_{x \to 0} G(x) = \frac{1}{3}$, it has no roots in the interval $\pi<x<\rho_2$, where $\rho_2$ is the root of the equation $G(x) = \frac13$ ($\rho_2= 5.7634\dots$). On the other hand, from the same reasons related to monotonicity of $G$ it follows that for any $x \in(\rho_2, 2 \pi)$ there exists a unique value $m=m(x) \in (0,\frac12)$ such that there holds equality (\ref{eq:G}). 

We claim that this dependence $m(x)$ is differentiable and has a positive derivative. Indeed, we must prove that 
\begin{equation}
\label{eq:dmdx}
m'(x) = \frac{G'(x) - m(x) G'(m(x) x)}{x G'(m(x) x)}>0. 
\end{equation}
Inequality (\ref{eq:dmdx}) follows from the following inequality:
\begin{align*}
G'(x) - m G'(m x) >0 \text{ for } G(m x) - G(x) = 0.
\end{align*} 
Let us prove it. We have
\begin{align*}
G'(x) = \left(\frac{1}{x^2}- \frac{\cot x}{x} \right)' = \frac{1 + \cot^2 x}{x} - \frac{1}{x^3} - \frac{G(x)}{x},
\end{align*}
\begin{multline*}
m G'(m x) = m \left(\frac{1+\cot^2(m x)}{m x}- \frac{1}{(m x)^3} - \frac{G(m x)}{m x} \right)= \frac{1 + \cot^2( m x)}{x} - \frac{1}{m^2 x^3} - \frac{G(m x)}{x},
\end{multline*}
where $G'(m x)$ is the derivative $G'(x)$ taken at $m x$. Since $\frac{G(m x)}{x} = \frac{G(x)}{x}$ we must prove the following inequality: 
\begin{multline*}
\frac{\cot^2(m x)}{x} - \frac{\cot^2(x)}{x}< \frac{1}{m^2 x^3} -\frac{1}{x^3} \Leftrightarrow \cot^2(m x) - \cot^2(x)< \frac{1}{m^2 x^2} -\frac{1}{x^2}.
\end{multline*}
Since $\cot t <\frac{1}{t}$ for any $t \in (0,\pi)$, we have $\cot^2(m x)<(m x)^{-2}$. Further, since for any $x \in (\rho_2,2\pi)$ we have $\cot x < -1$, it follows that $\cot^2 x - x^{-2}>1-\rho_2^{-2}>0$. Thus, we proved the required inequality. 

Now, since the dependence $m(x)$ is differentiable and has a positive derivative we can use the inverse function theorem and conclude that for any $m \in (0, \frac12)$ there exists the differentiable and increasing function $x(m) \in (\rho_2, 2 \pi)$ such that $x(m)$ is a root of equation (\ref{eq:G}). Thus, we see that $p_2(m) = m x(m)$ is a differentiable increasing function for $m \in (0, \frac12)$, which admits the following estimate:   
\begin{equation}
\label{eq:p2m012}
\rho_2 m < p_2(m)<2\pi m, \qquad 0 < m < \frac12.
\end{equation}  

In addition, we claim that for any $m \in (0,\frac12)$ the function $p_2(m)$ exceeds both $p_0(m) = \frac{\pi m}{1-m}$ and $p_1(m)$ (see Theorem~\ref{th:p1}). Indeed, for $0<m\leq\frac13$ this follows from the inequalities $\frac{\pi m}{1-m}\leq p_1(m)\leq\frac{3 \pi m}{2}$ and $p_2(m)>\rho_2 m >\frac{3\pi m}{2}$. For $\frac13<m<\frac12$ we have $p_1(m) \leq \frac{\pi m}{1-m}$ and it remains to prove that 
\begin{equation}
\label{eq:p2hyp}
\frac{\pi m}{1-m}<p_2(m), \qquad \frac13<m<\frac12.
\end{equation} 
In fact, it can easily be checked that for any $m\neq 1$ there holds the equality $\cot(\frac{\pi}{1-m})=\cot(\frac{\pi m}{1-m})$ and for any $m \in (\frac13,\frac12)$ there holds the inequality $\cot(\frac{\pi}{1-m})<0$. By definition, put $A= -\cot(\frac{\pi}{1-m})=-\cot(\frac{\pi m}{1-m})$ and $G_1(x) = G(x) - G(m x)$. The function $G_1(x)$ increases over the interval $\pi<x<2 \pi$. It follows from definition of $p_2(m)$ that $G_1(\frac{p_2(m)}{m})=0$. Since $G_1(x)$ increases, we have    
\begin{eqnarray*}
(\ref{eq:p2hyp})\iff G_1(\frac{\pi}{1-m})<0 \iff G(\frac{\pi}{1-m})<G(\frac{\pi m }{1-m})\iff \\
\iff \frac{1 - \frac{\pi}{1-m}\cot(\frac{\pi}{1-m})}{(\frac{\pi}{1-m})^2}<\frac{1 - \frac{\pi m}{1-m}\cot(\frac{\pi m}{1-m})}{(\frac{\pi m}{1-m})^2} \iff \\
\iff 1+ \frac{\pi A}{1-m}<\frac{1+ \frac{\pi m}{1-m} A}{m^2} 
\iff 1 + \frac{\pi A}{1-m}< \frac{1}{m^2} + \frac{\pi A}{1-m}\frac{1}{m}.	
\end{eqnarray*}
The last inequality is obvious since $1<\frac{1}{m^2}$, $\frac{\pi A}{1-m}<\frac{\pi A}{1-m}\frac{1}{m}$. Thus, inequality (\ref{eq:p2hyp}) is proved.  
\subsection{Reduction of $g_2(p,m)=0$ to a simpler form} \label{subsec:p2x2}
In this subsection we reduce the problem of finding (estimation) of $p_2(m)$ for $m \in [\frac12,1)$ to finding (estimation) of the minimal positive root $x_2(s)$ of a simpler equation $\tilde{g}_2(x,s)=0$ for $s\geq 3$, where $\tilde{g}_2(x,s)=4 s(\cos x - \cos(s x))- x (s^2-1)(s \sin x + \sin (s x))$.

It can easily be checked that
\begin{eqnarray*}
g_2(p,m)= &&-(m^2 -1)\sin p\sin(\frac{p}{m}) +  p(\frac{m+1}{2}(\sin p\cos(\frac{p}{m}) -\\
&&-\cos p\sin(\frac{p}{m})) + \frac{m-1}{2}(\sin p\cos(\frac{p}{m}) + \cos p\sin(\frac{p}{m}))). 
\end{eqnarray*}
Transform the first summand:
\begin{eqnarray*}
&\left(m^2 -1\right)\sin p\sin\left(\frac{p}{m}\right) = \frac{m^2 - 1}{2}\left(\cos p\cos\left(\frac{p}{m}\right)+\sin p\sin\left(\frac{p}{m}\right)-\cos p\cos\left(\frac{p}{m}\right)+\sin p\sin\left(\frac{p}{m}\right)\right) = \\
&= \frac{\left(m-1\right)\left(m+1\right)}{2}\left(\cos\left(p \left(1-\frac{1}{m}\right)\right)-\cos\left(p \left(1+\frac{1}{m}\right)\right)\right)=\\
&= -\frac12 \left(1+\frac{1+m}{1-m}\right)^{-2} \frac{4 \left(1+m\right)}{1-m}\left(\cos\left(\frac{p\left(1-m\right)}{m}\right)-\cos\left(\frac{1+m}{1-m} \frac{p\left(1-m\right)}{m}\right)\right).
\end{eqnarray*} 
Further, transform the second summand:
\begin{eqnarray*}
&p\left(\frac{m+1}{2}\sin\left(p-\frac{p}{m}\right) + \frac{m-1}{2}\sin\left(p+\frac{p}{m}\right)\right) = \frac{p}{2}\left(m-1\right)\left(\frac{m+1}{m-1}\sin\left(p-\frac{p}{m}\right)+ \sin\left(\frac{1+m}{1-m} \frac{p\left(1-m\right)}{m}\right)\right)=\\
&=\frac12 \left(1+\frac{1+m}{1-m}\right)^{-2}\left(-1+\left(\frac{1+m}{1-m}\right)^2\right) 
\left(\frac{-p\left(1-m\right)}{m}\right)\left(\frac{1+m}{1-m}\sin\left(\frac{p\left(1-m\right)}{m}\right)+ \sin\left(\frac{1+m}{1-m} \frac{p\left(1-m\right)}{m}\right)\right).
\end{eqnarray*} 
Let us introduce a parameter $s=\frac{1+m}{1-m}$ and a new variable $x=p\frac{1-m}{m}$. Now express $g_2(p,m)$:
\begin{eqnarray*}
&g_2(p,m)=\frac{1}{2(1+s)^2} \tilde{g}_2(x,s),
\text{ where } \tilde{g}_2(x,s) = 4 s\left(\cos x - \cos\left(s x\right)\right)- x \left(s^2-1\right)\left(s \sin x + \sin \left(s x\right)\right).
\end{eqnarray*}
Thus, we reduce the problem of finding (estimation) of $p_2(m)$ for $m \in (\frac12,1)$ to finding (estimation) of the following function: 
\begin{equation*}
x_2(s) = \min\{x>0|\tilde{g}_2(x,s)=0\} \text{ for } s\geq 3.
\end{equation*}
In Subsections \ref{subsec:x2sgeq3} -- \ref{subsec:x2sgeq3moreaccurate} we obtain a two-sided estimate of $x_2(s)$. Returning to the original variables $p = x \frac{s-1}{2}$, $m = \frac{s-1}{s+1}$ we obtain the statement of Theorem \ref{th:p2}.
\subsection{Two-sided estimate of $x_2(s)$ for $s\geq 3$} \label{subsec:x2sgeq3}
In this subsection we prove that $x_2(s)$ admits the following lower and upper bounds:
\begin{equation}\label{eq:g2_5}
\pi - \arcsin\frac{1}{s}<x_2(s)<\pi + \frac{1}{s-2} \text{ for } s\geq3, 
\end{equation} 
and $x_2(s)$ is a unique root of $\tilde{g}_2(x, s)$ on the interval $(\pi - \arcsin\frac{1}{s},\pi + \frac{1}{s-2})$.
\begin{proposition}\label{propos:g2_1}
There holds the inequality
\begin{equation}
\label{eq:g2_6}
x_2(s)<\frac{3 \pi}{2}.
\end{equation}
\end{proposition}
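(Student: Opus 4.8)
The plan is to localize a sign change of the map $x\mapsto\tilde{g}_2(x,s)$ inside the interval $\left(0,\frac{3\pi}{2}\right)$ and then invoke the intermediate value theorem. Since $\tilde{g}_2(0,s)=0$, everything hinges on comparing the sign of $\tilde{g}_2$ just to the right of the origin with its sign at the right endpoint $x=\frac{3\pi}{2}$. If these two signs are opposite, then $\tilde{g}_2(\cdot,s)$ vanishes at some interior point, so the minimal positive root satisfies $x_2(s)<\frac{3\pi}{2}$.

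First I would analyze the germ of $\tilde{g}_2(\cdot,s)$ at $x=0$ by Taylor expansion. Writing $\tilde{g}_2=4s(\cos x-\cos sx)-x(s^2-1)(s\sin x+\sin sx)$ and expanding, one finds that the coefficients of $x^2$ and $x^4$ vanish identically (both reduce, after using $s^4-1=(s^2-1)(s^2+1)$, to a difference of equal quantities). The first surviving term is of order six, and a direct computation gives
\[
\tilde{g}_2(x,s)=-\frac{s(s^2-1)^3}{360}\,x^6+O(x^8),\qquad x\to 0 .
\]
Since $s\geq 3$, the leading coefficient is strictly negative, so $\tilde{g}_2(x,s)<0$ for all sufficiently small $x>0$. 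In particular $\tilde{g}_2(\cdot,s)$ has no zeros in some punctured right neighborhood of $0$, which also guarantees that $x_2(s)$ is a well-defined positive number.

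Next I would evaluate $\tilde{g}_2$ at the endpoint $x=\frac{3\pi}{2}$. Using $\cos\frac{3\pi}{2}=0$ and $\sin\frac{3\pi}{2}=-1$, one obtains
\[
\tilde{g}_2\left(\frac{3\pi}{2},s\right)=\frac{3\pi}{2}(s^2-1)\left(s-\sin\frac{3\pi s}{2}\right)-4s\cos\frac{3\pi s}{2}.
\]
The dominant, non-oscillating term is $\frac{3\pi}{2}\,s(s^2-1)$, and bounding the two oscillating contributions crudely by $\left|\sin\frac{3\pi s}{2}\right|\leq 1$ and $\left|\cos\frac{3\pi s}{2}\right|\leq 1$ gives the lower bound $\frac{3\pi}{2}(s-1)^2(s+1)-4s$. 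A one-variable check (the value at $s=3$ equals $24\pi-12>0$, and the expression is increasing in $s$) shows this is positive for every $s\geq 3$. Hence $\tilde{g}_2\left(\frac{3\pi}{2},s\right)>0$, the sign is opposite to that near the origin, and the intermediate value theorem produces a zero in $\left(0,\frac{3\pi}{2}\right)$; therefore $x_2(s)<\frac{3\pi}{2}$.

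The only genuinely delicate point is the behavior near $x=0$: because the quadratic and quartic Taylor coefficients cancel identically, one must push the expansion to sixth order to read off the sign, and it is precisely the clean factorization of the sixth-order coefficient as $-\frac{s(s^2-1)^3}{360}$ that makes the argument decisive. Everything else—the endpoint estimate and the final application of the intermediate value theorem—is routine.
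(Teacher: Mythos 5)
Your proposal is correct and follows essentially the same route as the paper: the paper likewise observes that the derivatives of $\tilde{g}_2(\cdot,s)$ at $x=0$ vanish up to order five while the sixth derivative equals $-2s(s^2-1)^3<0$ (which is exactly your sixth-order Taylor coefficient $-\tfrac{s(s^2-1)^3}{360}$ times $6!$), then bounds $\tilde{g}_2\left(\tfrac{3\pi}{2},s\right)$ from below by the same non-oscillating estimate and concludes by a sign change on $\left(0,\tfrac{3\pi}{2}\right)$. No gaps.
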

\begin{proof}
Indeed, (\ref{eq:g2_6}) follows from the fact that $\tilde{g}_2(x, s)$ has zero derivatives up to the fifth order at the point $x=0$ and the sixth derivative is negative. In fact, $\pnder{\tilde{g}_2(x,s)}{x}{6}|_{x=0} = - 2 s (s^2-1)^3<0$  for all $s>1$. It means that the function $\tilde{g}_2(x, s)$ is negative in some right half-neighborhood of the point $x=0$. But at the point $x = \frac{3 \pi}{2}$ this function is positive, since  
\begin{multline*}
\tilde{g}_2(\frac{3 \pi}{2}, s) = - 4s \cos\left(\frac{3 \pi s}{2}\right) - \frac{3 \pi}{2}\left(s^2-1\right)\left(\sin\left(\frac{3 \pi s}{2}\right)-s\right)>-4 s - \frac{3 \pi}{2}\left(s^2-1\right)\left(1-s\right)=\\
=\frac{3 \pi}{2}\left(s^3-s^2-s\left(1+ \frac{8}{3 \pi}\right)+1 \right)>s^2-s-2>0 \text{ for any } s\geq 3.
\end{multline*}
Hence, we see that the continuous function $\tilde{g}_2(x, s)$ changes its sign over the interval $(0, \frac{3 \pi}{2}]$. This implies that $\tilde{g}_2(x, s)$ has a root in the interval $(0,\frac{3 \pi}{2})$. Thus, inequality (\ref{eq:g2_6}) is proved. 
\end{proof}
Now we rewrite $\tilde{g}_2(x, s)$ in the following form:
$$\tilde{g}_2(x, s) = 4 s \left(\cos x - \cos \left(s x\right)\right) + x H(x,s),$$
where $H(x,s) = -(s^2-1) h(x,s)$ and $h(x,s) = s \sin x + \sin \left(s x\right)$.

Let $J(x,s)=\frac{\tilde{g}_2(x, s)}{H(x,s)}$. It can easily be shown that
$$\pder{J(x,s)}{x}=\frac{\tilde{g}(x, s)^2}{h(x,s)^2}\geq0, \text{ where } \tilde{g}(x, s) \text{ was defined in \ref{subsectildeg1}}.$$
Thus we can see that if $\tilde{g}(x, s)$ and $h(x,s)$ do not vanish at the same time (this case is considered separately in Remark \ref{rem:g2_1}), then $J(x,s)$ increases at all points, where $h(x,s)\neq 0$, and has vertical asymptotes at the points $\tilde{x}(s)=\{x|h(x,s)=0\}$. The function $J(x,s)$ has the following properties: 1) $\lim_{x \to 0} J(x,s) = 0$, 2)$\lim_{x \to \tilde{x}_k(s) -0} J(x,s) = +\infty$, 3)$\lim_{x \to \tilde{x}_k(s) +0} J(x,s) = -\infty$, where $\tilde{x}_k(s)$ is the $k$-th positive root of $h(x,s)$. Thus, we conclude that the function $J(x,s)$ (and hence the function $\tilde{g}_2(x,s)$) has no roots in the interval $(0,\tilde{x}_1(s))$ but it has a unique root in the interval $(\tilde{x}_1(s), \tilde{x}_2(s))$. Hence, we arrive to the problem of finding (estimation) of $\tilde{x}_1(s)$ and $\tilde{x}_2(s)$. We study this problem in Proposition  \ref{stat:g2_1} (see below). Combining the estimations $x_2(s) \in (0,\frac{3 \pi}{2})$ and $\tilde{x}_1(s)<x_2(s)<\tilde{x}_2(s)$ with Proposition \ref{stat:g2_1} we get the following two-sided estimation of $x_2(s)$:
\begin{equation}
\label{eq:g2_11}
\pi-\arcsin\frac{1}{s}<x_2(s)<\frac{3 \pi}{2}.
\end{equation} 
\begin{remark}
\label{rem:g2_1}      
Consider the case when $\tilde{g}_2(x,s)$ and $H(x,s)$ have a common root for $0<x<\frac{3 \pi}{2}$. This holds if and only if $h(x,s)=0$ and $\cos x - \cos s x =0$. In Subsection \ref{sec_4_3} (see (\ref{eq:criticalPoints_g1})) we found the roots of the second equation, namely $x \in \{\frac{2 \pi n}{s \pm 1}|n\in \N\}$. Let us find the values of $s$, for which these roots are the roots of $h(x,s)$ simultaneously, in other words $s \sin\frac{2 \pi n}{s \pm 1}+ \sin \left(s \frac{2 \pi n}{s \pm 1}\right) = 0$. The last equation is equivalent to $(s \mp 1) \sin \frac{2 \pi n}{s \pm 1}=0 \Leftrightarrow \frac{2 n}{s \pm 1} = k,$ $k \in \N$. Thus, $s = \frac{2 n}{k} \mp 1$. So, we got that $\tilde{g}_2(x,s)$ and $H(x,s)$ vanish simultaneously if and only if $x= \pi k$ and $s = \frac{2 n}{k} \mp 1$, where $k, n \in \N$. It is clear that only for $k = 1$ and $s = 2 n + 1$ this solution satisfies the limitations $x \in (0, \frac{3 \pi}{2})$ and $s\geq 3$. Thus, we get 
$$\tilde{g}_2(x,s)=H(x,s)=0 \quad \Leftrightarrow \quad x= \pi, \ s = 2 n + 1, \ n \in \N. $$
Since $J(0,s)=0$, $J(x,s)$ increases over the interval $x \in (0, \pi)$, and $h(x,s)>0$ for $s=2 n +1$ in this interval (see Proposition \ref{stat:g2_1}), we see that $J(x,s)>0$ in this interval. This implies that for $s=2 n +1$ the function $\tilde{g}_2(x,s)$ has no roots for $x \in (0, \pi)$ and $x = \pi$ is the minimal positive root. Thus, we get 
\begin{equation}
\label{eq:g2_12}
x_2(s) = \pi \text{ for } s = 2 n + 1, \quad n \in \N.
\end{equation}
\end{remark}
In the following proposition we estimate the first and the second positive roots of the function $h(x,s) = s \sin x + \sin \left(s x\right)$. 
\begin{proposition}\label{stat:g2_1}
The functions $$\tilde{x}_1(s)=\min \left\{x>0| h(x,s)=0 \right\}$$ and $$\tilde{x}_2(s)= \min \left\{x>\tilde{x}_1(s)|h(x,s)=0\right\}$$ have the following two-sided estimates for $s\geq 3$:
$$\tilde{x}_1(s) \in \left[\pi-\arcsin \frac{1}{s}; \pi+\arcsin \frac{1}{s}\right]= I_s^1,$$
$$\tilde{x}_2(s) \in \left[2\pi - \arcsin \frac{1}{s}; 2\pi + \arcsin \frac{1}{s}\right] = I_s^2.$$
This estimate for $\tilde{x}_1(s)$ can be improved as follows:
\begin{align*}
\begin{cases}
\tilde{x}_1(s)=\pi, \text{ for } s\in \N, \\
\tilde{x}_1(s)\in \left[\pi - \arcsin \frac{1}{s}; \pi\right), \quad s\in(2k+1, 2k+ 2),\\
\tilde{x}_1(s)\in \left(\pi; \pi +\arcsin \frac{1}{s}\right],   \quad s \in (2k+2, 2k+3), \quad k\in \N.
\end{cases}
\end{align*}
\end{proposition}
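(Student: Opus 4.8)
The plan is to adapt the analysis of $\tilde g(x,s)=s\sin x-\sin(sx)$ from Subsections \ref{subsectildeg1}--\ref{sec_4_3} to the companion function $h(x,s)=s\sin x+\sin(sx)$, the only change being the sign of the second term. First I would localise the first positive root. On $\left(0,\arcsin\frac{1}{s}\right]$ we have $sx<s\arcsin\frac{1}{s}<\frac{\pi}{2}$ (the bound $s\arcsin\frac{1}{s}<\frac{\pi}{2}$ for $s>1$ was already used in the proof of Lemma \ref{lemma1}), so $\sin(sx)>0$ and $h>0$; on $\left(\arcsin\frac{1}{s},\pi-\arcsin\frac{1}{s}\right)$ we have $s\sin x>1\geq-\sin(sx)$, hence again $h>0$. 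Thus $h>0$ on all of $\left(0,\pi-\arcsin\frac{1}{s}\right)$. Evaluating the boundary values, $\sin\!\left(\pi-\arcsin\frac{1}{s}\right)=\frac{1}{s}$ gives $h\!\left(\pi-\arcsin\frac{1}{s},s\right)=1+\sin\!\left(\pi s-s\arcsin\frac{1}{s}\right)\geq 0$, while $\sin\!\left(\pi+\arcsin\frac{1}{s}\right)=-\frac{1}{s}$ gives $h\!\left(\pi+\arcsin\frac{1}{s},s\right)=-1+\sin\!\left(\pi s+s\arcsin\frac{1}{s}\right)\leq 0$. Together with the positivity just established, this places $\tilde x_1(s)$ in $I_s^1$.

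For $\tilde x_2(s)$ I would first show $h<0$ strictly on $\left(\pi+\arcsin\frac{1}{s},2\pi-\arcsin\frac{1}{s}\right)$: there $\sin x<-\frac{1}{s}$, so $h\leq s\sin x+1<0$ and no root occurs. The symmetric boundary computation at $2\pi\pm\arcsin\frac{1}{s}$ (using $\sin\!\left(2\pi-\arcsin\frac{1}{s}\right)=-\frac1s$ and $\sin\!\left(2\pi+\arcsin\frac{1}{s}\right)=\frac1s$) yields $h\!\left(2\pi-\arcsin\frac{1}{s},s\right)\leq 0$ and $h\!\left(2\pi+\arcsin\frac{1}{s},s\right)\geq 0$, so a root lies in $I_s^2$. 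To identify this as the \emph{second} root I must rule out a second root inside $I_s^1$, which is where a critical-point analysis parallel to Lemma \ref{lemma1} enters. The critical points of $h$ solve $\cos x=-\cos(sx)$, i.e.\ $x=\frac{(2n+1)\pi}{s+1}$ or $x=\frac{(2n+1)\pi}{s-1}$, $n\in\Z_{\geq 0}$ (the odd-numerator analogue of \eqref{eq:criticalPoints_g1}); I would show that for $s\geq 3$ at most one of them lies in $I_s^1$, forcing a single sign change there.

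The refinement of $\tilde x_1(s)$ rests on the identity $h(\pi,s)=\sin(\pi s)$. For $s\in(2k+1,2k+2)$ this is negative, so the sign change already occurs on $\left[\pi-\arcsin\frac{1}{s},\pi\right)$; for $s\in(2k+2,2k+3)$ it is positive, so the root is pushed into $\left(\pi,\pi+\arcsin\frac{1}{s}\right]$, provided I also confirm $h>0$ on $\left(\pi-\arcsin\frac{1}{s},\pi\right)$. For integer $s$ the substitution $y=\pi-x$ gives $h(\pi-y,s)=s\sin y+(-1)^{s+1}\sin(sy)$, which equals $h(y,s)$ when $s$ is odd and $\tilde g(y,s)$ when $s$ is even; both are positive on $\left(0,\arcsin\frac{1}{s}\right]$ (the latter was shown in the derivation of \eqref{eq:p1_general}), so $h>0$ on $\left[\pi-\arcsin\frac{1}{s},\pi\right)$ and $\tilde x_1(s)=\pi$. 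Note that at even integer $s$ both $h$ and $h'$ vanish at $\pi$ (the critical point $\frac{(2n+1)\pi}{s-1}$ reaches $\pi$ exactly when $s=2n+2$), which explains why the transition of the refinement sits at $s=2k+2$.

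I expect the main obstacle to be the two places where the crude amplitude bound $|\sin(sx)|\leq 1$ is insufficient: excluding a spurious second root in $I_s^1$ for the $\tilde x_2$-estimate, and proving $h>0$ on $\left(\pi-\arcsin\frac{1}{s},\pi\right)$ for $s\in(2k+2,2k+3)$. Both reduce to the precise placement of the critical points $\frac{(2n+1)\pi}{s\pm 1}$ relative to $\pi$, handled by the same elementary inequality bookkeeping as in Lemma \ref{lemma1}, supplemented where needed by a reflection argument in the spirit of Lemma \ref{lemma2}.
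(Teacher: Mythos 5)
Your route is essentially the paper's: positivity of $h$ on $\left(0,\pi-\arcsin\frac{1}{s}\right)$ from the amplitude bound, the boundary evaluations at $\pi\mp\arcsin\frac{1}{s}$ and $2\pi\mp\arcsin\frac{1}{s}$, strict negativity on $\left(\pi+\arcsin\frac{1}{s},2\pi-\arcsin\frac{1}{s}\right)$, the critical points $\frac{(2n+1)\pi}{s\pm1}$ of $h$, the sign of $h(\pi,s)=\sin(\pi s)$ to decide which half of $I_s^1$ contains the root, and a reflection $y=\pi-x$ for integer $s$ (your reduction of the integer case to the positivity of $h(y,s)$ or $\tilde g(y,s)$ on $\left(0,\arcsin\frac{1}{s}\right]$ is clean, and your argument for $h>0$ on $\left(0,\arcsin\frac1s\right]$ --- both terms positive since $s\arcsin\frac1s<\frac\pi2$ --- is simpler than the paper's monotonicity argument).

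The genuine gap is in the uniqueness step, which is exactly what the $\tilde x_2$-estimate and the refined placement of $\tilde x_1$ hinge on. Your claim that for $s\geq 3$ at most one critical point lies in $I_s^1$ is false: for $s\in(2k+1,2k+2)$ the points $\frac{(2k+3)\pi}{s+1}$ and $\frac{(2k+1)\pi}{s-1}$ exceed $\pi$ by $\frac{(2k+2-s)\pi}{s+1}$ and $\frac{(2k+2-s)\pi}{s-1}$ respectively, both of which tend to $0$ as $s\to 2k+2$, so for $s$ near an even integer \emph{both} lie inside $I_s^1$ (e.g.\ $s=3.9$: the critical points $\approx 3.206$ and $\approx 3.249$ both lie in $[\pi-0.259,\,\pi+0.259]$). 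Moreover, even a single interior critical point would not ``force a single sign change'': an interval containing $j$ critical points admits up to $j+1$ roots, so locating the critical points cannot by itself yield uniqueness --- you must evaluate $h$ there. The missing ingredient is the identity $h\bigl(\frac{(2n+1)\pi}{s+1},s\bigr)=(s+1)\sin\frac{(2n+1)\pi}{s+1}$ and $h\bigl(\frac{(2n+1)\pi}{s-1},s\bigr)=(s-1)\sin\frac{(2n+1)\pi}{s-1}$, which converts the position of a critical point relative to $\pi$ and $2\pi$ into the sign of $h$ there. With it, the argument the paper runs (and that you need) is: the half of $I_s^1$ that must contain the root, determined by $\sign h(\pi,s)$, contains \emph{no} critical points, so $h$ is strictly monotone there and the root is unique; the other half contains at most the two critical points above, and $h$ keeps a fixed sign at its endpoints and at those points, so it has no root there. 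Your closing remark that everything ``reduces to the precise placement of the critical points relative to $\pi$'' becomes true only once this identity is supplied; as written, the proposal substitutes for it a counting claim that is both incorrect and, even if true, insufficient.
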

\begin{proof}
First, we show that there exist roots of $h(x,s)$ in $I_s^1$ and $I_s^2$ 
\begin{align}
h\left(\pi \mp \arcsin \frac{1}{s}, s\right)=\pm1+ \sin\left(\pi \mp s\arcsin \frac{1}{s}\right)\Rightarrow \nonumber\\
h\left(\pi - \arcsin \frac{1}{s}, s\right)\geq 0, \text{   }  h\left(\pi + \arcsin \frac{1}{s}, s\right)\leq 0,\label{eq:g2_7}\\
h\left(2\pi \mp \arcsin \frac{1}{s}, s\right)=\mp1+\sin\left(2\pi s\mp s \arcsin \frac{1}{s}\right)\Rightarrow \nonumber\\
h\left(2\pi - \arcsin \frac{1}{s}, s\right)\leq 0, \text{   } h\left(2\pi + \arcsin \frac{1}{s}, s\right)\geq 0.\label{eq:g2_8} 
\end{align}
Further, if $\arcsin\left(\frac{1}{s}\right)<x <\pi-\arcsin\frac{1}{s}$, then $h(x,s)>0$ because in this interval we have $s \sin x > 1$. Now we claim that $h(x,s)>0$ for $0<x\leq\arcsin\frac{1}{s}$. Indeed, this follows since $h(x,s)$ is increasing over this half-interval and from equality $h(0,s)=0$. The last equality can easily be checked by direct calculation. Let us prove that $h(x,s)$ increases over the half-interval $x \in \left(0,\arcsin\frac{1}{s}\right]$. To prove this we consider the derivative $h'_s(x,s)=s(\cos x +\cos sx)$ and find all positive critical points   
\begin{eqnarray}
h'_x(x,s)=0 & \Leftrightarrow \cos x = - \cos sx \Leftrightarrow 
\begin{cases}
x = s x +\pi - 2\pi n_1,\quad n_1 \in \N\\
-x = s x +\pi - 2\pi n_2,\quad n_2 \in \N
\end{cases}
\Leftrightarrow \nonumber  \\
& \Leftrightarrow x \in\{\frac{2 n-1}{s\pm1} \pi|n \in \N\}. \label{eq:g2_9}
\end{eqnarray}
Since for any $s\geq 3$ we have $\min \left\{ \frac{2n-1}{s\pm1}\pi\right\}=\frac{\pi}{s\pm1}>\frac{\pi}{2s}>\arcsin \frac{1}{s}$, we see that the half-interval  $x \in \left(0,\arcsin\frac{1}{s}\right]$ does not contain any point of form (\ref{eq:g2_9}). Moreover, $h'_x(0,s)=2 s>0$ and we proved that $h(x,s)$ is increasing.  

So we proved that $h(x,s)>0$ for $0<x<\pi-\arcsin\frac{1}{s}$. Now we claim that there exists a unique root, denoted by $\tilde{x}_1(s)$, of the function $h(x,s)$ in the interval $\pi-\arcsin \frac{1}{s}\leq x \leq \pi + \arcsin \frac{1}{s}$. To prove this we divide the set $s\geq 3$ into the three subsets $s=k+2$, $s\in(2k+1, 2k+2)$, and $s\in(2k+2, 2k+3)$, where $k\in \N$.
\begin{enumerate}
\item Let $s=k+2$, $k\in \N$. We have $h(\pi,s)=0$. Uniqueness of the root $\tilde{x}_1(s) = \pi$ for $\pi-\arcsin \frac{1}{s}\leq x \leq \pi+\arcsin \frac{1}{s}$ follows from monotonicity of $h(x,s)$ w.r.t. $x$ in the intervals $\pi - \frac{\pi}{2s}<x<\pi$ and $\pi <x< \pi+\frac{\pi}{2s}$. Indeed, if the first interval contained a critical point (\ref{eq:g2_9}) then there would exist $n \in \N$ such that $(k+2)\pm 1 - \left(\frac{1}{2}\pm \frac{1}{2(k+2)}\right)<2n-1<(k+2)\pm 1$, but this contradicts to the following obvious inequality:  
\begin{equation}
\label{eq:g2_10}
\forall k \in \N: 0< \left(\frac{1}{2}\pm \frac{1}{2(k+2)}\right)<1.
\end{equation}
If the interval $\left(\pi; \pi+\frac{\pi}{2s}\right)$ contained a critical point (\ref{eq:g2_9}) then there would exist $n \in \N$ such that $(k+2)\pm1< 2n-1<(k+2)\pm 1+\left(\frac{1}{2}\pm \frac{1}{2(k+2)}\right)$, but this contradicts (\ref{eq:g2_10}).
\item Let $s\in(2k+1, 2k+2)$, $k\in \N$. We have $h(\pi, s)=\sin \pi s <0$. Combining this with (\ref{eq:g2_7}) we see that the function $h(x, s)$ has a root in the half-interval $\pi-\arcsin\left(\frac{1}{s}\right)\leq x<\pi$. Uniqueness of the root $\tilde{x_1}(s)$ follows from the constant sign of the derivative $h'_x(x,s)$ that is equivalent to absence of its roots. Indeed, let us show that the interval $\left(\pi -\frac{\pi}{2s}, \pi\right) \supset \left[\pi-\arcsin\frac{1}{s}, \pi\right)$ does not contain any critical point (\ref{eq:g2_9}). Assume the converse, then there exists $n \in \N$ such that $1-\frac{1}{2s}< \frac{2n-1}{s\pm 1}<1$. Since $2k+1< s < 2k+2$ we have $(2k+1\pm 1)\left(1- \frac{1}{2(2k+1)}\right)< 2n-1< 2k+2\pm 1$. Further, since $0< \frac{2k+1\pm 1}{2(2k+1)}=\frac{1}{2}\pm \frac{1}{2(2k+1)}<1$, $2n-1 \in \N$ and $2k+1\pm 1 \in \N$ we have $2n-1=2k+1\pm 1$. Now we see that no number $n$ satisfies the last equality, since $2n-1$ is odd and $2k+1\pm 1$ is even. This contradiction proves that the function $h(x,s)$ has a unique root for $\pi -\arcsin \frac{1}{s}\leq x<\pi$.
Further, we show that $h(\pi, s)<0$ for $\pi\leq x \leq \pi+\arcsin \left(\frac{1}{s}\right)$. It is sufficient to show that $h(x,s)<0$ at the end points of the considered segment and at all critical points inside it. At the left end we have $h(\pi, s) < 0$. At the right end we have $h\left(\pi+ \arcsin \frac{1}{s}, s\right)= -1 + \sin \left( \pi s + s \arcsin \frac{1}{s}\right)<0$, since from $(2k+1)\pi < s \pi + s \arcsin \frac{1}{s}< (2k+2) \pi + \frac{\pi}{2}$ it follows that  $\sin \left( s \pi + s \arcsin \frac{1}{s}\right)<1$. Now find critical points (\ref{eq:g2_9}) lying on the interval $\left(\pi, \pi+ \frac{\pi}{2s}\right) \supset \left(\pi, \pi + \arcsin \frac{1}{s}\right)$. It is easy to check that the inequalities $$\left(2k+1 \pm 1\right)< 2n-1< \left(2k+ 2 \pm 1\right) \left( 1 + \frac{1}{2(2k+1)}\right),\quad n, \ k \in \N$$ are satisfied only for the value $n = k +2$ if we select the sign ''plus'', and for $n = k+1$ if we select the sign ''minus''. Hence the considered interval contains only two critical points, namely $\frac{2k+3}{s+1}\pi$ and $\frac{2k+1}{s-1}\pi$. We can estimate the value of $h$ at these points as follows:
\begin{eqnarray*}
h\left(\frac{2k+3}{s+1}\pi, s \right) = &&s \sin \left(\frac{2k+3}{s+1} \right) + \sin \left(\frac{2k+3}{s+1}\pi s \right)=\\
= &&s \sin \left(\frac{2k+3}{s+1}\pi \right) + \sin \left(\left(2k+3\right)\pi - \frac{2k+3}{s+1}\pi\right) =\\
= &&(s+1) \sin \frac{2k+3}{s+1}\pi <0, \text{ since }\pi < \frac{2k+3}{s+1}< \pi + \frac{\pi}{2s}< 2\pi, \\
h\left(\frac{2k+1}{s-1}\pi, s \right) = &&s \sin \left(\frac{2k+1}{s-1}\pi\right)+ \sin \left(\frac{2k+1}{s-1}\pi s \right)= \\
=&&s \sin \left(\frac{2k+1}{s-1}\pi \right)+ \sin \left(\left(2k+1 \right) \pi + \frac{2k+1}{s-1}\pi \right)= \\
=&&(s-1) \sin \frac{2k+1}{s-1} \pi< 0, \text{  since } \pi < \frac{2k+1}{s-1} < \pi + \frac{\pi}{2s}< 2 \pi.
\end{eqnarray*}
Thus we proved that $h(x,s)<0$ for $\pi \leq x \leq \pi + \arcsin \frac{1}{s}$, therefore $h$ has no roots in this interval.
\item Let $s \in (2k+2, 2k+3)$, $k\in \N$. We claim that $h(x,s)>0$ if $\pi - \arcsin \left(\frac{1}{s}\right)\leq x \leq \pi$. It is sufficient to show that $h(x,s)>0$ at the end points of the considered segment and at all critical points inside it.  At the left end we have $h\left(\pi - \arcsin \frac{1}{s}, s\right)= 1+ \sin \left(\pi s - s \arcsin \frac{1}{s}\right)>0$, since from $(2k+2)\pi -\frac{\pi}{2}< \pi s - s \arcsin \frac{1}{s}< (2k+3)\pi$ it follows that $\sin\left(\pi s - s \arcsin \frac{1}{s}\right)> -1$. At the right end we have $h(\pi, s)= \sin \pi s >0$. It is easy to check that the inequalities $$(2k+2 \pm 1)\left(1- \frac{1}{2(2k+2)}\right)< 2n-1< 2k+3\pm 1, \quad n, \ k \in \N$$ are satisfied only for the value $n = k +2$ if we select the sign ''plus'', and for $n = k+1$ if we select the sign ''minus''. Hence the interval $\left(\pi- \frac{\pi}{2s},\pi\right) \supset \left[\pi - \arcsin \frac{1}{s}; \pi \right)$ contains only two critical points (\ref{eq:g2_9}), namely  $\frac{2k+3}{s+1}\pi$ and $\frac{2k+1}{s-1}\pi$. We can estimate the value of $h$ at these points as follows:
\begin{align*}
h\left(\frac{2k+3}{s+1}\pi, s \right) = (s+1)\sin \frac{2k+3}{s+1}\pi>0, \text { since } 0< \pi-\frac{\pi}{2s}<\frac{2k+3}{s+1}\pi,\\
h\left(\frac{2k+1}{s-1}\pi, s \right) = (s-1)\sin \frac{2k+1}{s-1}\pi>0, \text { since } 0<\pi - \frac{\pi}{2s}<\frac{2k+1}{s-1}\pi<\pi.
\end{align*}
Thus we proved that $h(x,s)>0$ for $\pi \leq x \leq \pi + \arcsin \frac{1}{s}$, therefore $h(x,s)$ has no roots in this interval.
On the other hand in view of (\ref{eq:g2_7}) we see that $h(x,s)$ has a root in the half-interval $\pi< x\leq \pi+\arcsin\left(\frac{1}{s}\right)$.

Uniqueness of the root $\tilde{x}_1(s)$ follows from the constant sign of the derivative $h'_x(x,s)$ that is equivalent to absence of its roots. Indeed, let us show that the interval $\left(\pi; \pi+\frac{\pi}{2s}\right) \supset \left(\pi, \pi + \arcsin \frac{1}{s}\right]$ does not contain any critical point (\ref{eq:g2_9}). Assume the converse, then there exists $n \in \N$ such that $1<\frac{2n-1}{s\pm 1}< 1+\frac{1}{2s}$. Since $2k+2< s < 2k+3$ we have 
$$2k+2 \pm 1 < 2n-1 < (2k+3\pm 1) \left(1+\frac{1}{2(2k+2)}\right) \text { , } 2n-1 \in \N \text { , }k \in \N.$$
Thus we have $2n-1 = 2k+3\pm 1$. But the last equality is satisfied for no numbers $n$ and $k$. Therefore such $n$ does not exist. This contradiction proves that the function $h(x,s)$ has a unique root for $\pi< x \leq \pi + \arcsin \frac{1}{s}$. 
\end{enumerate}
Thus we proved that for any $s\geq 3$ the function $h(x,s)$ has a unique root on the interval $\pi-\arcsin \frac{1}{s}\leq x \leq \pi + \arcsin \frac{1}{s}$.  To conclude the proof, it remains to check that $h(x,s)$ has no roots in the interval $\pi+\arcsin \frac{1}{s}< x < 2\pi-\arcsin \frac{1}{s}$. Indeed, we see that $h(x,s)<0$ since in this interval we have $s \sin x < -1$.    
\end{proof}
Now we improve the upper bound in estimation (\ref{eq:g2_11}).
\begin{proposition}\label{propos:g2_2}
There holds the inequality
\begin{equation}
\label{eq:g2_12.5}
x_2(s)<\pi + \frac{1}{s-2}.
\end{equation}
\end{proposition}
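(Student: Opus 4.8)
The plan is to place the point $\pi+\frac{1}{s-2}$ on the same monotone branch of $J$ that contains $x_2(s)$ and to check that $\tilde{g}_2$ is already positive there. Recall from the previous subsection that on the interval $(\tilde{x}_1(s),\tilde{x}_2(s))$ between the first two positive roots of $h(x,s)$ one has $H(x,s)=-(s^2-1)h(x,s)>0$, that $J(x,s)=\tilde{g}_2(x,s)/H(x,s)$ increases strictly from $-\infty$ to $+\infty$, and that $x_2(s)$ is its unique zero; hence $\tilde{g}_2(x,s)<0$ on $(\tilde{x}_1(s),x_2(s))$ and $\tilde{g}_2(x,s)>0$ on $(x_2(s),\tilde{x}_2(s))$. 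Writing $\ve=\frac{1}{s-2}$ and $x_\ast=\pi+\ve$, it therefore suffices to prove the two facts: (i) $x_\ast\in(\tilde{x}_1(s),\tilde{x}_2(s))$, and (ii) $\tilde{g}_2(x_\ast,s)>0$.

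For (i), I would use the elementary bound $\arcsin\frac1s\leq \frac1s+\frac{1}{s^3}$ (valid for $s\geq 3$) together with $\frac{1}{s-2}-\frac1s=\frac{2}{s(s-2)}>\frac{1}{s^3}$, which gives $\arcsin\frac1s<\frac{1}{s-2}$. By Proposition \ref{stat:g2_1} then $\tilde{x}_1(s)\leq \pi+\arcsin\frac1s<x_\ast$, while $x_\ast\leq \pi+1<\frac{3\pi}{2}<2\pi-\arcsin\frac1s\leq \tilde{x}_2(s)$; this settles the containment.

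For (ii), the key observation is the identity $s\ve=\frac{s}{s-2}=1+2\ve$. Using $\cos x_\ast=-\cos\ve$ and $\sin x_\ast=-\sin\ve$, I would write
\[
\tilde{g}_2(x_\ast,s)=\Big[-4s\cos\ve+(\pi+\ve)(s^2-1)\,s\sin\ve\Big]-\Big[4s\cos(sx_\ast)+(\pi+\ve)(s^2-1)\sin(sx_\ast)\Big].
\]
The bracketed oscillatory term has the form $A\cos\phi+B\sin\phi$ with $A=4s$, $B=(\pi+\ve)(s^2-1)$, $\phi=sx_\ast$, so it is at most $\sqrt{A^2+B^2}\leq B+\frac{A^2}{2B}$. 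This yields
\[
\tilde{g}_2(x_\ast,s)\geq -4s\cos\ve+(\pi+\ve)(s^2-1)(s\sin\ve-1)-\frac{8s^2}{(\pi+\ve)(s^2-1)}.
\]
From $\sin\ve\geq \ve-\frac{\ve^3}{6}$, $s\ve=1+2\ve$ and $\ve\leq 1$ one gets the clean gap $s\sin\ve-1\geq \frac{3}{2(s-2)}$, so the middle term is at least $\frac{3\pi(s^2-1)}{2(s-2)}$; the first term is at least $-4s$ and the last at most $\frac{9}{\pi}$. Positivity thus reduces to $3\pi(s^2-1)>2(s-2)\big(4s+\frac{9}{\pi}\big)$, whose difference is $(3\pi-8)s^2+(16-\frac{18}{\pi})s+(\frac{36}{\pi}-3\pi)$, a quadratic in $s$ with all coefficients positive, hence positive for every $s\geq 3$.

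The main obstacle is step (ii): since $s$ is not an integer, $\cos(sx_\ast)$ and $\sin(sx_\ast)$ do not collapse to simple values, and estimating them separately by $1$ is too lossy to survive the limit $s\to\infty$, where the surviving positive contribution is only of order $s$. What makes the argument work is merging the two oscillatory contributions into a single sinusoid and bounding it by its amplitude $\sqrt{A^2+B^2}$; combined with the sharp lower bound $s\sin\ve-1\geq\frac{3}{2(s-2)}$, this is \emph{just} enough to keep $\tilde{g}_2(x_\ast,s)$ positive uniformly in $s\geq 3$.
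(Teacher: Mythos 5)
Your proposal is correct and follows essentially the same route as the paper's proof: both reduce the claim to showing $\tilde{g}_2\left(\pi+\frac{1}{s-2},s\right)>0$, bound the combined oscillatory term $4s\cos(sx)+x(s^2-1)\sin(sx)$ by its amplitude $\sqrt{16s^2+x^2(s^2-1)^2}$ (then by $B+\frac{A^2}{2B}$ via $\sqrt{1+2t}<1+t$), and use the Taylor bound $\sin\ve>\ve-\frac{\ve^3}{6}$ together with $s\ve=1+\frac{2}{s-2}$. The only cosmetic differences are your step (i), which the paper does not need (it infers $x_2(s)<\pi+\frac{1}{s-2}$ from the sign change of $\tilde{g}_2$ between $0^+$ and that point), and the final elementary verification, which you package as a quadratic in $s$ with positive coefficients while the paper splits into the cases $3\leq s\leq 4$ and $s>4$.
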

\begin{proof}
Indeed, (\ref{eq:g2_12.5}) follows from the inequality  $\tilde{g}_2(\pi+ \frac{1}{s-2},s)>0$ that we prove now. Denote by $x$  the value $\pi + \frac{1}{s-2}$. We have 
\begin{multline}
\tilde{g}_2\left(\pi + \frac{1}{s-2},s\right) = 4 s\left(\cos x - \cos\left(s x\right) \right) - x\left(s^2-1\right)\left(s \sin\left(\pi + \frac{1}{s-2}\right)+ \sin\left(x s\right)\right)>  \\
>-4 s - 4 s \cos\left(s x\right) - x \left(s^2-1\right)\sin\left(s x\right)+x s \left(s^2-1\right)\sin\left(\frac{1}{s-2}\right).\label{eq:g2_13}
\end{multline} 
Combining this with the following inequalities:
\begin{eqnarray*}
&& 4 s \cos\left(s x\right) + x \left(s^2-1\right)\sin\left(s x\right) \leq \sqrt{16 s^2 + x^2(s^2-1)^2},\\
&& s \sin\left(\frac{1}{s-2}\right)>s\left(\frac{1}{s-2}- \frac{1}{6(s-2)^3}\right)= 1 + \frac{2}{s-2}- \frac{s}{6 (s-2)^3},
\end{eqnarray*}
we obtain
\begin{eqnarray*}
\tilde{g}_2\left(\pi + \frac{1}{s-2},s\right)> x\left(s^2-1\right)\left(1 + \frac{2}{s-2}- \frac{s}{6 (s-2)^3}\right)-4 s-\sqrt{16 s^2 + x^2(s^2-1)^2}.
\end{eqnarray*}
Consequently it remains to prove that
\begin{equation}
\label{eq:g2_14}
4 s+\sqrt{16 s^2 + x^2(s^2-1)^2}<x\left(s^2-1\right)\left(1 + \frac{2}{s-2}- \frac{s}{6 (s-2)^3}\right).
\end{equation}
Dividing both sides of (\ref{eq:g2_14}) by $x\left(s^2-1\right)$, we get the equivalent inequality
\begin{equation}
\label{eq:g2_15}
\frac{4 s}{x\left(s^2-1\right)}+\sqrt{1+\frac{16 s^2}{x^2\left(s^2-1\right)^2}}<1 + \frac{2}{s-2}- \frac{s}{6 (s-2)^3}.
\end{equation}
Now use the fact that $\sqrt{1 + 2 t}<1+t$ $(\forall t>0)$. This allows us to replace the inequality (\ref{eq:g2_15}) by the following stronger inequality:
\begin{equation}
\label{eq:g2_16}
\frac{4 s}{x\left(s^2-1\right)}+\frac{8 s^2}{x^2\left(s^2-1\right)^2}+ \frac{s}{6 (s-2)^3}<\frac{2}{s-2}.
\end{equation}
Multiplying both sides of (\ref{eq:g2_16}) by $\frac{s-2}{2}$, we obtain
\begin{equation*}
\frac{2 s \left(s-2\right)}{x\left(s^2-1\right)}+\frac{4 s^2 \left(s-2\right)}{x^2\left(s^2-1\right)^2}+ \frac{s}{12 (s-2)^2}<1.
\end{equation*}
We strengthen this inequality by replacing the left-hand side of the product $s \left(s-2\right)$ with the greater value $\left(s-1\right)^2$. Thus we must prove that
\begin{equation}
\label{eq:g2_17}
\frac{2\left(s-1\right)}{x\left(s+1\right)}+\frac{4 s}{x^2\left(s+1\right)^2}+ \frac{s}{12 (s-2)^2}<1, \quad s \geq 3.
\end{equation}
If $3\leq s\leq 4$, then $\frac{s-1}{s+1}\leq\frac{3}{5}$, $\frac{s}{\left(s-2\right)^2}\leq 3$, $\frac{s}{\left(s+1\right)^2}\leq\frac{3}{16}$, and the left-hand side of (\ref{eq:g2_17}) is not greater than
\begin{equation*}
\frac{6}{5 x}+\frac{12}{16 x^2}+\frac{3}{12} <\frac{2}{5}+\frac{1}{12}+\frac{3}{12}=\frac{2}{5}+\frac{1}{3}=\frac{11}{15}.
\end{equation*}
If $s>4$, then $\frac{s}{\left(s-2\right)^2}<1$, $\frac{s}{\left(s+1\right)^2}<\frac{4}{25}$, and the left-hand side of (\ref{eq:g2_17}) is less than 
\begin{equation*}
\frac{2}{x}+\frac{16}{25 x^2}+\frac{1}{12} <\frac{2}{3}+\frac{16}{225}+\frac{1}{12}=\frac{3}{4}+\frac{16}{225}<0.83.
\end{equation*}
Thus we proved inequality (\ref{eq:g2_17}). Hence inequality (\ref{eq:g2_12.5}) is proved.
\end{proof}
\subsection{More accurate estimate of $x_2(s)$, $s>3$}\label{subsec:x2sgeq3moreaccurate}
In this subsection we improve the two-sided estimate obtained in the previous subsection $\pi - \arcsin\left(\frac{1}{s}\right)<x_2(s)< \pi + \frac{1}{s-2}$.
\begin{proposition}\label{propos:g2_3}
There hold the inequalities
\begin{eqnarray}
x_2(s)>\pi \text{ for } s \in [2 k + 2, 2k + 3), \quad k \in \N, \label{eq:x2prec1}\\
x_2(s)<\pi \text{ for } s \in (2 k + 1, 2k + 2 - \frac{1}{2 k +2}], \quad k \in \N. \label{eq:x2prec2}
\end{eqnarray}
\end{proposition}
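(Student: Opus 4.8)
The plan is to pin down the position of $\pi$ relative to $x_2(s)$ by reading off the sign of $\tilde g_2(\pi,s)$. First I would recall the sign pattern of $\tilde g_2(\cdot,s)$ established through the monotone function $J(x,s)=\tilde g_2(x,s)/H(x,s)$ in Subsection~\ref{subsec:x2sgeq3}: on $(0,\tilde x_1(s))$ one has $h>0$, hence $H<0$ and $J>0$, so $\tilde g_2<0$; on $(\tilde x_1(s),\tilde x_2(s))$ one has $h<0$, hence $H>0$, and $J$ increases from $-\infty$ to $+\infty$ crossing zero exactly at $x_2(s)$, so $\tilde g_2<0$ on $(\tilde x_1(s),x_2(s))$ and $\tilde g_2>0$ on $(x_2(s),\tilde x_2(s))$. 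Combining this with $x_2(s)<\frac{3\pi}{2}$ (see (\ref{eq:g2_6})) and with $\tilde x_2(s)\geq 2\pi-\arcsin\frac1s>\frac{3\pi}{2}$ (Proposition~\ref{stat:g2_1}), I get that on $(0,\frac{3\pi}{2})$ the map $x\mapsto\tilde g_2(x,s)$ changes sign only at $x_2(s)$, from negative to positive. Since $\pi\in(0,\frac{3\pi}{2})$, this yields the equivalences $x_2(s)>\pi\iff\tilde g_2(\pi,s)<0$ and $x_2(s)<\pi\iff\tilde g_2(\pi,s)>0$, so everything reduces to the sign of
\[
\tilde g_2(\pi,s)=-4s\,(1+\cos\pi s)-\pi\,(s^2-1)\sin\pi s.
\]

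For (\ref{eq:x2prec1}) I would write $s=2k+2+\delta$ with $\delta\in[0,1)$, so that $\cos\pi s=\cos\pi\delta$ and $\sin\pi s=\sin\pi\delta$. Then both summands above are nonpositive and do not vanish simultaneously (at $\delta=0$ the first equals $-8s$, and for $\delta\in(0,1)$ it is strictly negative since $1+\cos\pi\delta>0$), whence $\tilde g_2(\pi,s)<0$ and $x_2(s)>\pi$. This is the easy half.

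For (\ref{eq:x2prec2}) I would write $s=2k+1+\delta$ with $\delta\in(0,\tfrac{2k+1}{2k+2}]$, so that $\cos\pi s=-\cos\pi\delta$ and $\sin\pi s=-\sin\pi\delta$. Thus $\tilde g_2(\pi,s)>0$ is equivalent to $\pi(s^2-1)\sin\pi\delta>4s(1-\cos\pi\delta)$, and after the half-angle identities and cancelling the factor $2\sin\frac{\pi\delta}{2}>0$ this becomes
\[
\tan\frac{\pi\delta}{2}<\frac{\pi(s^2-1)}{4s}.
\]
The main obstacle is to make this survive uniformly up to the endpoint $\delta=\frac{2k+1}{2k+2}$, where the left side is large; the cut-off $\frac{1}{2k+2}$ is exactly what keeps the inequality true, and a crude decoupling of the two sides already fails at $k=1$. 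I would therefore substitute $\eta=1-\delta\in[\frac{1}{2k+2},1)$, so that $\tan\frac{\pi\delta}{2}=\cot\frac{\pi\eta}{2}$ and $s=2k+2-\eta$, and then use $\cot\frac{\pi\eta}{2}<\frac{2}{\pi\eta}$ (which follows from $\tan t>t$). It then suffices to prove the purely algebraic inequality $\eta\,\frac{s^2-1}{s}>\frac{8}{\pi^2}$ on the range $\eta\in[\frac{1}{2k+2},1)$.

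To finish I would check that $\eta\mapsto\eta\,\frac{s^2-1}{s}$ (with $s=2k+2-\eta$) is increasing: its derivative equals $s-\eta-\frac1s-\frac{\eta}{s^2}$, which is positive because $s\geq 2k+1\geq 3$ and $\eta<1$. Hence its minimum is attained at $\eta=\frac{1}{2k+2}$, where $s=\frac{(2k+1)(2k+3)}{2k+2}$ and the value simplifies to $1-\frac{1}{(2k+2)^2}-\frac{1}{(2k+1)(2k+3)}$; this exceeds $\frac{8}{\pi^2}=0.8105\dots$ for every $k\in\N$, being at least its value $0.8708\dots$ at $k=1$ and increasing with $k$. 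This gives $\tilde g_2(\pi,s)>0$, hence $x_2(s)<\pi$, completing the argument.
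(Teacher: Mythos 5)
Your argument is correct, and for the harder half it takes a genuinely different route from the paper's. Both proofs share the same skeleton: reduce everything to the sign of $\tilde{g}_2(\pi,s)=-4s\left(1+\cos \pi s\right)-\pi\left(s^2-1\right)\sin \pi s$ (your justification of this reduction via the monotonicity of $J(x,s)$ and the location of $\tilde{x}_2(s)$ is in fact spelled out more fully than in the paper, which simply asserts sufficiency), and the proof of (\ref{eq:x2prec1}) is essentially identical in both. The divergence is in (\ref{eq:x2prec2}). The paper sets $t=\pi d$, $d=s-(2k+1)$, bounds $\cos t>1-\frac{t^2}{2}$ and $\sin t$ from below by a chord estimate to reduce to $-2st+\pi(s-1)>0$, and then invokes ``$t<1$'' --- but $t$ ranges up to $\pi\left(1-\frac{1}{2k+2}\right)$, which is close to $\pi$, so the paper's final step does not hold as written near the right endpoint of the interval (precisely where the statement is sharpest). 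You instead cancel the common factor $2\sin\frac{\pi\delta}{2}$ via half-angle identities to get the clean equivalent form $\tan\frac{\pi\delta}{2}<\frac{\pi(s^2-1)}{4s}$, pass to $\eta=1-\delta$, bound $\cot\frac{\pi\eta}{2}<\frac{2}{\pi\eta}$, and then exploit the monotonicity of $\eta\mapsto \eta\,\frac{s^2-1}{s}$ to evaluate only at the endpoint $\eta=\frac{1}{2k+2}$, where the value $1-\frac{1}{(2k+2)^2}-\frac{1}{(2k+1)(2k+3)}$ visibly exceeds $\frac{8}{\pi^2}$ for all $k\in\N$. This keeps the two sides coupled all the way to the critical endpoint (where, as you note, decoupled bounds already fail at $k=1$), and so your version is not only different but more robust than the printed one; the trade-off is a slightly longer chain of elementary reductions in place of the paper's two Taylor-type bounds.
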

\begin{proof}
To prove (\ref{eq:x2prec1}) it is sufficient to check that $\tilde{g}_2(\pi,s)<0$. First let us calculate the values of the function
\begin{equation*}
\tilde{g}_2(\pi,s) = -4 s\left(1 + \cos\left(\pi s\right)\right) - \pi \left(s^2-1\right)\sin\left(\pi s\right) \label{eq:tg2pis}
\end{equation*}
at the end points of the intervals $s \in [2 k + 2, 2k + 3)$ for all $k \in \N$
\[\tilde{g}_2(\pi,2k+2) = -8 s <0, \quad \tilde{g}_2(\pi,2k+3) = 0.\]
Further, using the inequality 
\[\cos\left(\pi s\right) > -1,\quad \sin\left(\pi s\right)>0, \quad \quad\forall s \in (2 k + 2, 2k + 3),\]
we get that $\tilde{g}_2(\pi,s)<0$ inside the considered intervals and thus (\ref{eq:x2prec1}) is proved.

To prove (\ref{eq:x2prec2}) it is sufficient to check that $\tilde{g}_2(\pi,s)>0$. Let $d = s - (2k +1) \in (0, 1 - \frac{1}{2 k +2})$. We must prove that for all $k \in \N$ there holds the inequality
\begin{equation}
\tilde{g}_2(\pi,2k + 1 +d) = 4 s (\cos(\pi d)-1) + (s^2-1)\pi \sin(\pi d)>0. \label{eq:tg2pis1}
\end{equation}
Let $t = \pi d \in (0, \pi - \frac{\pi}{2k+2})$. Since $\frac{\pi}{2}<\pi - \frac{\pi}{2k+2}<\pi$, we have
\[\sin t > t \sin ( \pi - \frac{\pi}{2k+2}) = t \sin (\frac{\pi}{2k+2}).\]
Using the inequality $\forall \alpha \in (0, \frac{\pi}{2}): \sin \alpha > \frac{\pi}{2}\alpha$, we get
\[t \sin (\frac{\pi}{2k+2})>t \frac{\pi}{2} \frac{\pi}{2k+2} > \frac{t}{2 k +2}>\frac{t}{s+1}.\]
Therefore we have $\sin t > \frac{t}{s+1}$. Combining this with $\forall t \in \R:\ \cos t > 1 - \frac{t^2}{2}$, we obtain 
\begin{eqnarray*}
\tilde{g}_2(\pi,2k + 1 +d)> - 2 s t^2 + \pi (s^2-1) \frac{t}{s+1} = t(-2 s t + \pi(s-1)).
\end{eqnarray*}
Note that $t>0$ and thus we must prove that $-2 s t + \pi(s-1)>0$. Since $t<1$ and $s>3$ we have
\[-2 s t + \pi(s-1)> (\pi -2) s - \pi> 1.14 s - \pi> 3.42 - \pi > 0.\]
Thus (\ref{eq:x2prec2}) is proved. 
\end{proof}
\subsection{Differentiability of $p_2(m)$, $0<m<1$}
Differentiability and monotonic increase of the function $p_2(m)$ for $m \in (0,\frac12)$ is proved in Subsection \ref{subsec:p1m012}. 
Let us prove differentiability of $p_2(m)$ for
\[m \in \left[\frac{k}{1+k},\frac{1+2k}{3+2k}-\frac{2}{15+40k+32k^2+8 k^3} \right]\cup \left[\frac{1+2k}{3+2k},\frac{1+k}{2+k}\right] =: M.\]
Since $p_2(m) = \frac{m}{1-m} x_2(\frac{1+m}{1-m})$ (see Subsection \ref{subsec:p2x2}), this follows from differentiability of $x_2(s)$ for
\[s \in \left[2 k+ 1,2 k + 2 - \frac{1}{2 k +2}\right] \cup \left[2 k + 2,2 k + 3\right] =:S.\]
Let $\hat{S} = S \setminus \{2 k +1\}$, $k\in \N$. Recall that $x_2(s)$ is the minimal positive root of $\tilde{g}_2(x,s)=0$. Also, $x_2(s)$ is the minimal positive root of $J(x,s) = \frac{\tilde{g}_2(x, s)}{H(x,s)}$ (see Subsection \ref{subsec:x2sgeq3}) if $s\in \hat{S}$. By estimate (\ref{eq:g2_11}), we study differentiability of $x_2(s)$ in the interval $x \in (\pi - \arcsin\left(\frac{1}{s}\right),\pi +\frac{1}{s-2})=:X$. Using the implicit function theorem, we get $x_2'(s) = -\pder{J(x,s)}{s}/\pder{J(x,s)}{x}|_{x = x_2(s)} = \frac{4 w(x,s)(1-s^2)^{-2}}{\tilde{g}^2(x,s)}|_{x = x_2(s)}$, where $w(x,s)$ is a smooth function without singularities
\begin{multline*}
w(x,s) = x \sin (x) \sin (s x) s^4+(-\cos (x) \cos (s x) x+x+2 (\cos (s x)-\cos (x)) \sin (x)) s^3+\\
+((\cos (s x)-\cos (x)) \sin (s x)-x \sin (x)\sin (s x)) s^2+(x \cos (x) \cos (s x)-x) s+(\cos (s x)-\cos (x)) \sin (s x),
\end{multline*}   
and $\tilde{g}(x,s) = \sin (s x)-s \sin (x)$ (we studied this function in Subsection \ref{subsec:p2x2}). In Lemma \ref{lemma1} we proved that the equation $\tilde{g}(x,s)=0$ has a unique root, denoted by $x_1(s)$, in the interval $x \in \left[\pi - \arcsin(\frac{1}{s}),\pi + \arcsin(\frac{1}{s})\right]$. It was also proved that  $\tilde{g}(x,s) \neq 0$ in the interval $x\in(\pi + \arcsin(\frac{1}{s}),2 \pi - \arcsin(\frac{1}{s}))$. Hence $x_1(s)$ is a unique root of $\tilde{g}(x,s)$ for $x \in X$. Thus the function $x_2(s)$ is differentiable at all points, where $x_2(s)\neq x_1(s)$. We claim that $x_2(s)\neq x_1(s)$ for any $s\in S$ (see Figure \ref{fig:x1x2}). Indeed, combining (\ref{eq:x2prec1}), (\ref{eq:x2prec2}) and (\ref{corol1}), we get  
\begin{eqnarray*}
\begin{cases}
x_2(s)>\pi>x_1(s) \text{ for } s \in [2 k + 2, 2k + 3),\\
x_2(s)<\pi<x_1(s) \text{ for } s \in (2 k + 1, 2k + 2 - \frac{1}{2 k +2}], \quad k \in \N.
\end{cases}
\end{eqnarray*}
Now consider the value $s = 2 k+1$, $k\in \N$. In this case we have $x_1(s)=x_2(s)=\pi$ (see (\ref{eq:x1pi}) and (\ref{eq:g2_12})).
Notice that in this case we can not consider $x_2(s)$ as a root of the equation $J(x,s)=0$ (see Remark \ref{rem:g2_1} in Subsection \ref{subsec:x2sgeq3}). Therefore, let us return to the original function $\tilde{g}_2(x,s)$. Using the implicit function theorem, we get $x_2'(s) = -\pder{\tilde{g}_2(x,s)}{s}/\pder{\tilde{g}_2(x,s)}{x}|_{x = x_2(s)}$. According to this formula, we calculate $x_2'(2 k +1)$ for $x=\pi$: 
\begin{multline*}
x_2'(2 k +1) =\frac{\sec (k \pi ) \left(-2 \left(k \pi ^2 (k+1)+1\right) \cos (2 k \pi )+(2 k+1) \pi  \sin (2 k \pi )+2\right)}{4 k (k+1) (2 k+1) \pi  \cos (k \pi )-4 (3 k (k+1)+1) \sin (k \pi )} = -\frac{\pi }{4 k+2}.
\end{multline*} 
Note also that
\begin{eqnarray*}
\pder{\tilde{g}_2(x,s)}{x}=&&\sin (s x)-s (\left(s^2-1\right) x \cos(x)+ \left(s^2-1\right) x \cos (s x)+\left(s^2+3\right) \sin(x)-3 s \sin (s x)),\\
\pder{\tilde{g}_2(x,s)}{s}=&&4 \cos (x)+\left(-\left(s^2-1\right) x^2-4\right) \cos (s x)+x \left(-3 \sin (x) s^2+2 \sin (s x) s+\sin (x)\right) 
\end{eqnarray*}
are smooth functions and $$\pder{\tilde{g}_2(x, 2 k +1)}{x}|_{x=\pi} = 8 k (1 + k) (1 + 2 k) \pi,\  \pder{\tilde{g}_2(\pi,s)}{s}|_{s=2 k +1} = 4 k (1 + k) \pi^2.$$ Thus $x_2(s)$ is continuously differentiable at $s = 2 k+1$.
So we proved that $x_2(s)$ is a continuously differentiable function for $\forall s \in S$. Hence $p_2(m)$ is a continuously differentiable function for $\forall m \in M$. 

Continuity of the function $x_2(s)$ for $s \in  (2k + 2 - \frac{1}{2 k +2},2 k + 2)$ follows from the implicit function theorem (see \cite{b:fihtengolc} p. 449). Indeed, the function $J(x,s)$ is strictly monotone, thus $x_2(s)$ is a continuous function.

Now we claim that there exist values $\sast$, $\bar{s} \in \tilde{S}$ such that $x_2(\sast)=\pi$, $x_2(\bar{s})=x_1(\bar{s})$, and $\sast<\bar{s}$. Indeed,  
\begin{eqnarray*}
x_2\left(2k + 2 - \frac{1}{2 k +2}\right)<\pi<x_1\left(2k + 2 - \frac{1}{2 k +2}\right),\qquad x_1(2k + 2)=\pi<x_2(2k + 2), 
\end{eqnarray*}
and $x_1(s)>\pi \quad \forall s \in \tilde{S}$. From continuity of $x_2(s)$ and $x_1(s)$, it follows that there exist $\sast$ and $\bar{s}$ as claimed. Therefore there exist the values $\mast$ and $\bar{m}$ as claimed by Theorem \ref{th:p2}.  
\begin{figure}
\includegraphics[width=\textwidth]{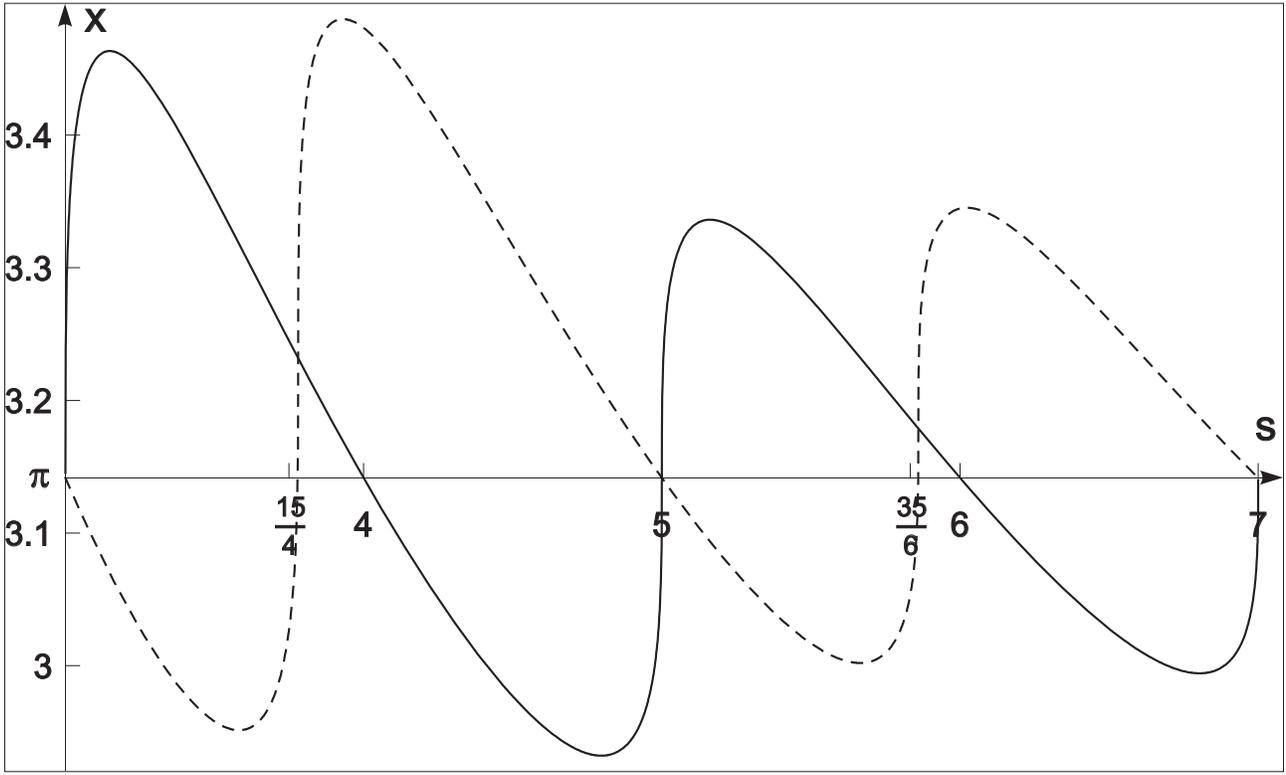}
\caption{Plots of the functions $x_1(s)$ and $x_2(s)$ (dashed line)}
\label{fig:x1x2}
\end{figure}

Finally note that at the point $p = p_2(m)$ the function $p \mapsto g_2(p,m)$ changes its sign. Indeed, for $m \in (0,\frac12)$ this follows since the function $G_1(x)$ is increasing (see Subsection \ref{subsec:p1m012}). For $\frac12 \leq m<1$ this follows, for example, from the fact that $\tilde{g}_2(x,s)$ changes its sign over the interval $(0, \frac{3 \pi}{2})$ (see Proposition \ref{propos:g2_1}).
\subsection{Study of $p_2(m)$ for $m>1$}\label{subsec:g2_sym}
We use the change of variables  $\bar{p} = \frac{p}{m}$, $\bar{m} = \frac{1}{m} \in (0,1)$ to study $p_1(m)$ in the case $m > 1$. We have 
$g_2(\bar{p},\bar{m}) = -\frac{1}{m^2} g_2(p,m)$. Therefore, we have $g_2(\bar{p},\bar{m}) =0$ iff $g_2(p,m)=0$. 
In such a way we get the following functional equation: 
\begin{equation}
\label{eq:g2_p2sym}
p_2(m) = m p_2(\frac{1}{m}).
\end{equation}
Hence, the properties a)---e) (see Theorem \ref{th:p2}) of $p_2(m)$ for $m > 1$ follow from the similar properties of $p_2(m)$ for $m\in(0,1)$.
\section{Relative position of plots of $p_1(m)$ and $p_2(m)$}
In this section we discuss the mutual behavior of the functions $p_1(m)$ and $p_2(m)$ and prove the following theorem:
\begin{theorem}\label{th:p1p2}
The functions $p_1(m)$ and $p_2(m)$ have the following properties:
\begin{enumerate}
\item[a)] $p_1(m)$ and $p_2(m)$ are continuous functions for $m\in(0,1) \cup (1,+\infty)$.
\item[b)] For $m \in (0,\frac12) \cup (2,+\infty)$ there holds the inequality
\begin{equation}\label{p1p2m012}
p_1(m)<p_2(m).
\end{equation}
\item[c)]For $m \in [\frac12,1)$ there hold the following inequalities:
\begin{equation}\label{p1p2m121}
\begin{cases}
p_1(m)>p_2(m) \text{ for } m \in \left(\frac{k}{1+k},\frac{1+2k}{3+2k}-\frac{2}{15+40k+32k^2+8 k^3} \right],\\
p_1(m)<p_2(m) \text{ for } m \in \left[\frac{1+2k}{3+2k},\frac{1+k}{2+k} \right),
\quad \forall k \in \N.
\end{cases}\end{equation}
Plots of the functions $p_1(m)$ and $p_2(m)$ cross each other at the points $m = \frac{k}{k+1}$ and at points $m = \bar{m}_k$, where $\bar{m}_k \in \left(\frac{1+2k}{3+2k}-\frac{2}{15+40k+32k^2+8 k^3},\frac{1+2k}{3+2k}\right)$, as follows: 
\begin{eqnarray*}
p_1(\bar{m}_k)= p_2(\bar{m}_k), \quad p_1\left(\frac{k}{k+1}\right)= p_2\left(\frac{k}{k+1}\right)=\frac{\pi m}{1-m}=\pi k, \quad \forall k \in \N, 
\end{eqnarray*}
For $m \in (1,2]$  there hold the following inequalities:
\begin{equation}\label{p1p2m12}
\begin{cases}
p_1(m)<p_2(m) \text{ for } m \in \left(\frac{k+2}{k+1},\frac{3+ 2k}{1+ 2k} \right],\\ 
p_1(m)>p_2(m) \text{ for } m \in \left[\frac{3+2k}{1+2k}+\frac{2}{1+8k(1+k^2)},\frac{k+1}{k} \right),
\quad \forall k \in \N.
\end{cases}\end{equation}
Plots of the functions $p_1(m)$ and $p_2(m)$ cross each other at the points $m = \frac{k+1}{k}$ and at points $m = \bar{m}_k$, where  $\bar{m}_k \in \left(\frac{3+ 2k}{1+ 2k},\frac{3+2k}{1+2k}+\frac{2}{1+8k(1+k^2)}\right)$, as follows:
\begin{eqnarray*}
p_1(\bar{m}_k)= p_2(\bar{m}_k), \quad p_1 \left(\frac{k+1}{k}\right)= p_2\left(\frac{k+1}{k}\right)= \pi (k+1), \quad \forall k \in \N.
\end{eqnarray*}
\item[d)] Both functions $p_1(m)$ and $p_2(m)$ tend to infinity as $m \to 1$.
\item[e)] At the points $m \in \{\frac{k}{k+1}, k\in \N\}\cup\{\frac{k+1}{k}|k \in \N\}$ plots of the functions $p_1(m)$ and $p_2(m)$ cross one another at an acute angle, since
\begin{eqnarray*}
0<p_2'(\frac{k}{k+1})<p_1'(\frac{k}{k+1})=+\infty, \quad -\infty=p_1'(\frac{k+1}{k})<p_2'(\frac{k+1}{k})<0.
\end{eqnarray*} 
\end{enumerate}
\end{theorem}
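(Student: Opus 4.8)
The plan is to compare $p_1$ and $p_2$ indirectly, through the common ``separating hyperbola'' $p=\frac{\pi m}{1-m}$ (for $m<1$) and $p=\frac{\pi m}{m-1}$ (for $m>1$): the wrapping of each curve around these hyperbolae is already recorded in Theorems \ref{th:p1} and \ref{th:p2}, so most of the inequalities should fall out by transitivity. The routine items I would settle first. Part a) is just the continuity in Theorems \ref{th:p1}a) and \ref{th:p2}a), and part d) is the two Remarks giving $p_1,p_2\to+\infty$ as $m\to1\pm0$. For part b) on $(0,\tfrac12)$ I would quote the inequality $p_1(m)<p_2(m)$ already proved in Subsection \ref{subsec:p1m012}; for $m>2$ I would put $1/m\in(0,\tfrac12)$ and use the functional equations \ref{eq:g1_p1sym}, \ref{eq:g2_p2sym}, so that $\sign\bigl(p_1(m)-p_2(m)\bigr)=\sign\bigl(p_1(1/m)-p_2(1/m)\bigr)<0$.

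For part c) with $m\in[\tfrac12,1)$ I would fix $k$ and read off the signs on one hyperbola-period. By \ref{p1hypm01}, $p_1$ lies above $\frac{\pi m}{1-m}$ on $(\frac{k}{k+1},\frac{2k+1}{2k+3})$ and below it thereafter, while by \ref{p2hypm01prec}, $p_2$ lies below the hyperbola on $(\frac{k}{k+1},\frac{1+2k}{3+2k}-\frac{2}{15+40k+32k^2+8k^3}]$ and above it on $[\frac{1+2k}{3+2k},\frac{1+k}{2+k})$. Transitivity through the hyperbola then gives $p_1>\frac{\pi m}{1-m}>p_2$ on the first sub-interval and $p_1<\frac{\pi m}{1-m}<p_2$ on the second, which is exactly \ref{p1p2m121}. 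For $m\in(1,2]$ I would run the identical argument with the other hyperbola, using the $m>1$ wrapping results \ref{p1hypmgr1} and \ref{p2hypmgr1prec}, to obtain \ref{p1p2m12}. The crossings follow by continuity: at $m=\frac{k}{k+1}$ both curves meet the hyperbola, whence $p_1=p_2=\pi k$ (and at $m=\frac{k+1}{k}$ the functional equations give $p_1=p_2=\frac{k+1}{k}\pi k=\pi(k+1)$); and since in each period the two displayed inequalities hold with opposite signs at the two ends of the uncovered gap, the intermediate value theorem produces the crossing $\bar m_k$ inside it.

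Part e) is the only genuinely computational point. The infinite one-sided slopes $p_1'(\frac{k}{k+1})=+\infty$ and $p_1'(\frac{k+1}{k})=-\infty$ I would quote from Theorem \ref{th:p1}b). For $p_2$ I would differentiate $p_2(m)=\frac{m}{1-m}\,x_2\bigl(\frac{1+m}{1-m}\bigr)$, using that $m=\frac{k}{k+1}$ corresponds to $s=2k+1$, together with $x_2(2k+1)=\pi$ (see \ref{eq:g2_12}) and the value $x_2'(2k+1)=-\frac{\pi}{4k+2}$ found in the differentiability analysis of $p_2$. The chain rule should then give $p_2'(\frac{k}{k+1})=\frac{\pi(k+1)^3}{2k+1}>0$, finite and positive; differentiating $p_2(m)=m\,p_2(1/m)$ at $m=\frac{k+1}{k}$ should give $p_2'(\frac{k+1}{k})=-\frac{\pi k^3}{2k+1}<0$. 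Combined with the infinite $p_1$-slopes these are precisely the acute-angle inequalities of part e).

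I expect the main obstacle to be bookkeeping rather than analysis. One must track the three interlocking families of rational nodes $\frac{k}{k+1}$, $\frac{2k+1}{2k+3}$, $\frac{k+1}{k+2}$ (and their reciprocals for $m>1$) and verify that the perturbed endpoints, e.g. $\frac{1+2k}{3+2k}-\frac{2}{15+40k+32k^2+8k^3}$, really sit strictly between the relevant nodes---an elementary rational inequality---so that the hyperbola comparison is legitimate and the gaps into which $\bar m_k$ falls are correctly delimited. A subtle point is that the perturbation terms chosen on the two sides of $m=1$ are not exact reciprocals of one another, so \ref{p1p2m12} cannot be deduced from \ref{p1p2m121} by reflection; each side must be handled with its own wrapping theorem. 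The only honest calculation is the chain-rule evaluation of $p_2'$ at the nodal points in part e).
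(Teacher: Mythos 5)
Your proposal is correct and follows essentially the same route as the paper's proof: reduction of $m>1$ to $m\in(0,1)$ via the functional equations, the already-established inequality $p_1<p_2$ on $(0,\tfrac12)$, transitivity through the hyperbola $p=\frac{\pi m}{1-m}$ using the wrapping estimates of Theorems \ref{th:p1} and \ref{th:p2} on $[\tfrac12,1)$, the intermediate value theorem for the crossings $\bar m_k$, and a direct chain-rule computation for item e). Your explicit values $p_2'(\frac{k}{k+1})=\frac{\pi(k+1)^3}{2k+1}$ and $p_2'(\frac{k+1}{k})=-\frac{\pi k^3}{2k+1}$ check out and in fact supply the detail the paper leaves as ``checked by direct calculations.''
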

\begin{proof}
Items a)--d) of Theorem~\ref{th:p1p2} follow from Theorem~\ref{th:p1} and Theorem~\ref{th:p2}. First notice that proof of the case $m>1$ is reduced to proof of the case $m \in (0,1)$, since the functions $p_1(m)$ and $p_2(m)$ satisfy the functional equations $p_1(m) = m p_1(\frac{1}{m})$ and $p_2(m) = m p_2(\frac{1}{m})$. Further, if $m \in (0, \frac13)$, then $p_1(m)<\frac{3 \pi m}{2} < 5.7 m < p_2(m)$. If $m \in [\frac13,\frac12)$, then $p_1(m)\leq \frac{\pi m}{1-m} < p_2(m)$ (see (\ref{eq:p2hyp})). If $m \in [\frac12,1)$ then we have the following inequalities:
\begin{equation*}
\begin{cases}
p_2(m)<\frac{\pi m}{1-m}<p_1(m) \text{ for } m \in \left(\frac{k}{1+k},\frac{1+2k}{3+2k}-\frac{2}{15+40k+32k^2+8 k^3} \right],\\
p_2(m)>\frac{\pi m}{1-m}>p_1(m) \text{ for } m \in \left[\frac{1+2k}{3+2k},\frac{1+k}{2+k} \right), \quad k \in \N.
\end{cases}
\end{equation*} 
Since $p_1(m)$ and $p_2(m)$ are continuous functions, then these inequalities imply existence of values $$\bar{m}_k \in \left(\frac{1+2k}{3+2k}-\frac{2}{15+40k+32k^2+8 k^3},\frac{1+2k}{3+2k}\right)$$ such that $p_1(\bar{m}_k)= p_2(\bar{m}_k)$ for any $k \in \N$. It follows immediately from Theorems~\ref{th:p1} and \ref{th:p2} that $p_1(\frac{k}{k+1})= p_2(\frac{k}{k+1})=\frac{\pi m}{1-m}, \quad \forall k \in \N$.

Finally, item e) is checked by direct calculations. This completes the proof of Theorem~\ref{th:p1p2}. 
\end{proof}
Figure \ref{fig:p1p2cross} shows the plots of the functions $p_1(m)$ and $p_2(m)$. Note that these plots have an infinite number of intersection points. 
\begin{figure}
\includegraphics[width=\textwidth]{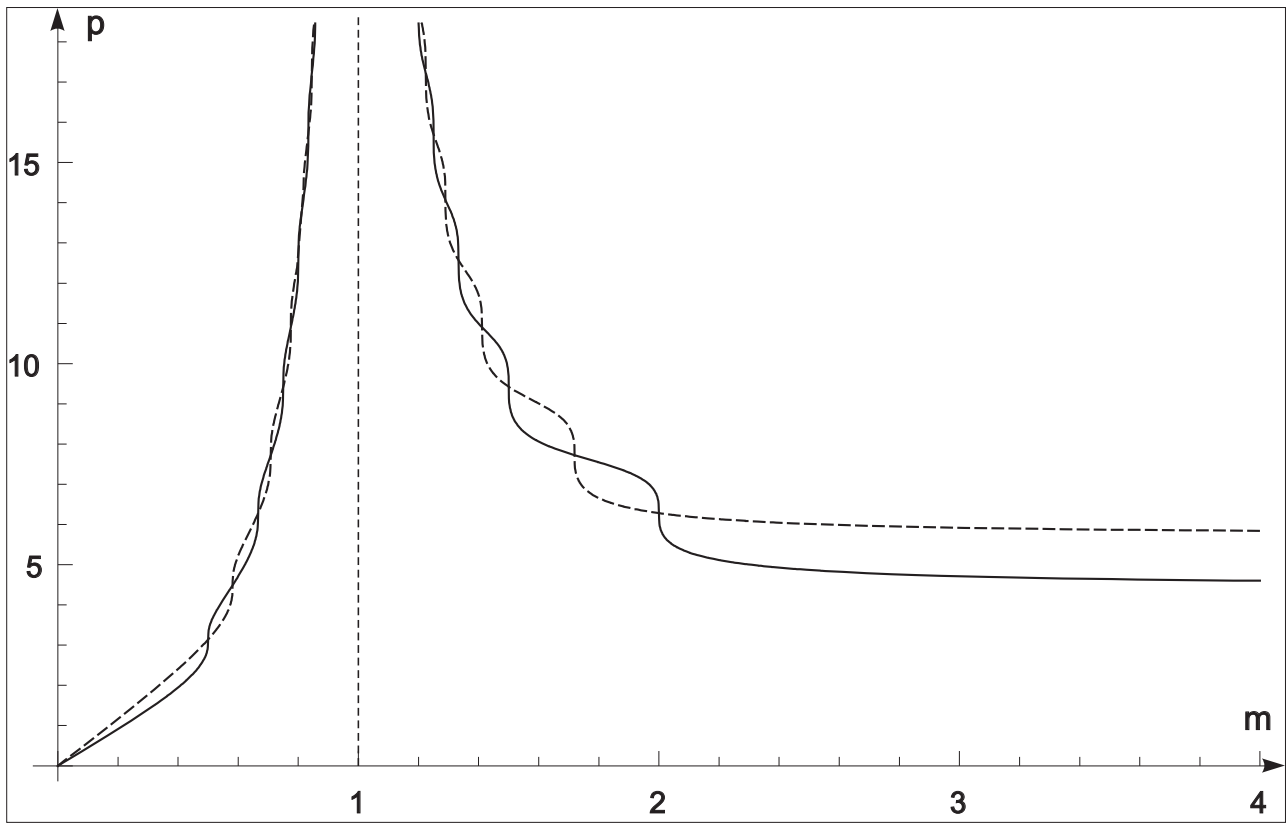}
\caption{Plots of the functions $p_1(m)$ and $p_2(m)$ (dashed line)}
\label{fig:p1p2cross}
\end{figure}
\section{Limit behavior of Maxwell sets and cut time}
\label{sec:asymp_max}
Consider a sequence of extremal trajectories $Q_t$  as $\rho \to 0$, $m \to \bar{m}$. The instants of time $t = t_1(m)= 2 p_1(m)/m$ and $t = t_2(m) = 2 p_2(m) /m$ define asymptotics of the first Maxwell time for $Q_t$, corresponding to the reflections $\eps^1$ and $\eps^2$ of mathematical pendulum~\eq{pend}. We obtained two-sided estimates of $t_1(m)$ and $t_2(m)$ for $m \in (0,1)\cup(1,+\infty)$ and discussed continuity and differentiability of these functions. Note that the functions $t_1(m)$ and $t_2(m)$ are characterized by very complex behavior: their plots have vertical tangents at a countable number of points and $\lim\limits_{m \to 1} t_1(m) = \lim\limits_{m \to 1} t_2(m) = + \infty$. Then we showed that plots of $t_1(m)$ and $t_2(m)$ have an infinite number of intersection points.   

Denote by $\lambda$ the vector of adjoint variables. In the paper~\cite{s2_as} limit behavior of the Maxwell set $\MAX^1$ is studied. Then upper bound on cut time 
$$
\tcut(\lambda) = \sup \{ t > 0 \mid Q_s = \Exp(\lam,s) \text{  is optimal for } s \in [0, t]\}
$$ 
as $(\theta,d) \to (0,0)$ was obtained. Namely there is the following estimate for the cut time:
$$
\underset{{\rho \to 0, \, m \to \barm}}{\overline{\ \lim}} \tcut(\lambda) \leq t_1(\barm)
\text{ for }  \barm > 0, \ \barm \neq 1.
$$
Fix any compact set $K \subset \{ m \in \R \mid m > 0, \ m \neq 1 \}$. In~\cite{s2_as} the following upper bound for cut time is proved: 
$$
\underset{{\rho \to 0, \, m \in K}}{\overline{\ \lim}} \tcut(\lambda) \leq \max\limits_{m \in K} t_1(m).
$$
Similar estimates for the cut time $\tcut$, based on the limiting behavior of the Maxwell set $\MAX^2$, will be obtained in later studies.

Recall that plots of $t_1(m)$ and $t_2(m)$ have an infinite number of intersection points. This means that even in asymptotic case the behavior of the first Maxwell times for the plate-ball problem is much more complicated than for the related invariant optimal control problems (nilpotent sub-Riemannian problem with the growth vector (2,3,5)~\cite{dido_exp}, the problem on Euler elasticae~\cite{el_max}, sub-Riemannian problem on the group of motions of a plane~\cite{max_sre}, sub-Riemannian problem in Martinet case~\cite{ABCK}). The first three problems were studied by Yu.~Sachkov. He showed that in these problems the plots of Maxwell times have not more than two intersection points globally. While in the plate-ball problem this number is infinite
even in the simple asymptotic case. This shows the new level of complexity of the plate-ball problem
not encountered in control theory before. 
Taking into account complexity of parameterization of extremal trajectories in this problem, it is difficult to obtain the exact solution. However, on the basis of these results it is possible to develop algorithms and software for the approximate solution of the plate-ball problem. This will be the subject of future work.  


\begin{thebibliography}{99}
\bibitem{hammersley}
J.M.~Hammersley, Oxford commemoration ball. In: {\em Probability, 
Statistics and Analysis}, pp. 112--142. London Math. Soc. lecture 
notes, ser. 79 (1983).

\bibitem{arthurs_walsh}
A.M~Arthurs., G.R~Walsh. On Hammersley's minimum problem for a 
rolling sphere. 
{\em Math. Proc. Cambridge Phil. Soc.} {\bf 99} (1986), 529--534.

\bibitem{jurd_plate-ball}
V.~Jurdjevic,
The geometry of the plate-ball problem,
{\em Arch. Rat. Mech. Anal.}, v. 124 (1993), 305--328.

\bibitem{jurd_book}
V.~Jurdjevic,
Geometric Control Theory,
Cambridge University Press, 1997.

\bibitem{euler}  
L.~Euler, Methodus inveniendi lineas curvas maximi minimive proprietate
gaudentes, sive solutio problematis isoperimitrici latissimo sensu
accepti. Lausanne, Geneva, 1744.

\bibitem{love}
A.E.H.~Love, A treatise on the mathematical theory of elasticity.
Dover, New York, 1927.

\bibitem{s2_as}
A.P.~Mashatkov, Yu.L.~Sachkov, Extremal trajectories and Maxwell points
in the plate-ball problem (in Russian), Sbornik Mathematics, 2011 (accepted).

\bibitem{s2r2}
Yu. L. Sachkov, Maxwell strata and symmetries in the problem of optimal rolling of a sphere over a plane (in Russian), Mat. Sb., 201:7 (2010), 99–120.  

\bibitem{masht1}
A.P.~Mashatkov, Asymptotics of extremal trajectories in the plate-ball problem (in Russian), Contemporary Mathematics. Fundamental Directions, 2011 (accepted).

\bibitem{pontryagin_quatern}
L.S.~Pontryagin, Generalizations of numbers (in Russian), Moscow, Science publishers, 1986.

\bibitem{notes} 
Agrachev~A.A.,  Sachkov~Yu.L., Control theory from the
geometric viewpoint. Springer-Verlag, Berlin (2004).

\bibitem{PGBM}
L.S. Pontryagin,   V.G. Boltayanskii,   R.V. Gamkrelidze,   E.F. Mishchenko,   The mathematical theory of optimal processes, Wiley (1962) (Translated from Russian).

\bibitem{b:fihtengolc}
G.M.~Fikhtengol'ts, A Course in Differential and Integral Calculus, vol.1, ''Lan'', SPB, 2009) [in Russian].

\bibitem{ABCK}
A.~Agrachev, B.~Bonnard, M.~Chyba, I.~Kupka,
Sub-Riemannian sphere in Martinet flat case. {\em J.~ESAIM: Control,
Optimization and Calculus of Variations}, 1997, v.2,
377--448.

\bibitem{dido_exp}
Yu.L.~Sachkov, Exponential map in the generalized Dido problem (in Russian). Mat. Sb., 2003, 194:9, 63–90. 

\bibitem{el_max}
Yu. L. Sachkov,
Maxwell strata in Euler's elastic problem,
Journal of Dynamical and Control Systems,
Vol. 14 (2008), No. 2  (April), pp. 169--234. 

\bibitem{max_sre}
I. Moiseev, Yu. L. Sachkov,
Maxwell strata in sub-Riemannian problem on the group of motions of a 
plane,
{\em ESAIM: COCV}, Vol. 16 (2010), pp. 380--399.

\end{thebibliography}
\end{document}